\newcommand{\symm}[1]{\mathfrak{S}_{#1}}
\newcommand{\nlift}{N}
\newcommand{\atms}{c}
\newcommand{\vtcs}{n}
\newcommand{\repdim}{d}
\newcommand{\atom}{atom\xspace}
\newcommand{\atoms}{atoms\xspace}
\newcommand{\Unitaries}[1]{\mathrm{U}(#1)}
\newcommand{\Hyperoctahedrals}[1]{\mathrm{H}(#1)} 
\newcommand{\sgnrep}{\mathtt{sgn}}
\newcommand{\stdrep}{\mathtt{std}}
\newcommand{\permrep}{\mathtt{perm}}
\newcommand{\idrep}{\mathtt{def}}
\newcommand{\ketket}[2]{\ket{#1}\!\ket{#2}} 
\newcommand{\charpoly}[2]{\phi\parens*{#1;#2}}
\newcommand{\matchpoly}[2]{\mu\parens*{#1;#2}}
\newcommand{\factorpoly}[2]{\alpha\parens*{#1;#2}} \newcommand{\factorpolyplain}{\alpha}
\newcommand{\factorpolynomial}{additive characteristic polynomial\xspace}
\newcommand{\additiveproduct}{additive product\xspace}
\newcommand{\addprod}{\pluscirc}
\newcommand{\roots}{\mathrm{roots}}
\newcommand{\mathfunc}{\mathsf}
\newcommand{\CCyc}{\mathfunc{CCyc}}
\newcommand{\FrWalk}{\mathfunc{FrWalk}}
\newcommand{\Heap}{\mathfunc{Heap}}
\newcommand{\THeap}{\mathfunc{THeap}}
\newcommand{\Pyr}{\mathfunc{Pyr}}
\newcommand{\length}{\ell}
\newcommand{\spec}{\mathfunc{spec}}
\newcommand{\specrad}{\rho}
\newcommand{\IG}{X}     
\newcommand{\ITree}{\mathbb{T}}
\newcommand{\FinQuo}{\mathfunc{FinQuo}}
\newcommand{\UCT}{\mathfunc{UCT}}
\newcommand{\coloredcyc}{\widetilde{\upgamma}}
\newcommand{\freelikewalk}{\widetilde{\upomega}}
\newcommand{\Matchings}{\mathfunc{Matchings}}
\newcommand{\Type}{\mathfunc{Type}}
\newcommand{\maxroot}{\mathfunc{maxroot}}
\newcommand{\BitToSwap}{\mathfunc{Swap}}
\newcommand{\StringToPerm}{\mathfunc{Perm}}
\newcommand{\StringToPotential}{\mathfunc{Potential}}
\newcommand{\LiftEnc}{\mathfunc{LiftEnc}}
\newcommand{\Adj}{\mathfunc{Adj}}
\newcommand{\LiftAdj}[2]{\mathfunc{LiftAdj}_{#1,#2}}
\newcommand{\freeprods}{\ell}
\begin{document}

\title{$X$-Ramanujan graphs}
\author{Sidhanth Mohanty\thanks{Electrical Engineering and Computer Sciences Department, University of California Berkeley.  \texttt{sidhanthm@cs.berkeley.edu}.  A substantial portion of this work was done while the author was at Carnegie Mellon University.} \and Ryan O'Donnell\thanks{Computer Science Department, Carnegie Mellon University. \texttt{odonnell@cs.cmu.edu}. Supported by NSF grants CCF-1618679 and CCF-1717606. This material is based upon work supported by the National Science Foundation under grant numbers listed above. Any opinions, findings and conclusions or recommendations expressed in this material are those of the author and do not necessarily reflect the views of the National Science Foundation (NSF).}}
\date{\today}

\maketitle

\begin{abstract}
Let $X$ be an infinite graph of bounded degree; e.g., the Cayley graph of a free product of finite groups.  If $G$ is a finite graph covered by~$X$, it is said to be \emph{$X$-Ramanujan} if its second-largest eigenvalue $\lambda_2(G)$ is at most the spectral radius $\specrad(X)$ of~$X$, and more generally $k$-quasi-$X$-Ramanujan if $\lambda_k(G)$ is at most $\specrad(X)$.  In case $X$ is the infinite $\Delta$-regular tree, this reduces to the well known notion of a finite $\Delta$-regular graph being \emph{Ramanujan}.  Inspired by the Interlacing Polynomials method of Marcus, Spielman, and Srivastava, we show the existence of infinitely many $k$-quasi-$X$-Ramanujan graphs for a variety of infinite~$X$.  In particular, $X$ need not be a tree; our analysis is applicable whenever $X$ is what we call an \emph{\additiveproduct} graph. This \additiveproduct is a new construction of an infinite graph $A_1 \addprod \cdots \addprod A_\atms$ from finite ``\atom'' graphs $A_1, \dots, A_\atms$ over a common vertex set.  It generalizes the notion of the free product graph $A_1 \ast \cdots \ast A_\atms$ when the \atoms $A_j$ are vertex-transitive, and it generalizes the notion of the universal covering tree when the \atoms $A_j$ are single-edge graphs.  Key to our analysis is a new graph polynomial $\factorpoly{A_1, \dots, A_\atms}{x}$ that we call the \emph{\factorpolynomial}. It generalizes the well known matching polynomial $\matchpoly{G}{x}$ in case the \atoms $A_j$ are the single edges of~$G$, and it generalizes the $r$-characteristic polynomial introduced in~\cite{Rav16,LR18}.  We show that $\factorpoly{A_1, \dots, A_\atms}{x}$ is real-rooted, and all of its roots have magnitude at most $\specrad(A_1 \addprod \cdots \addprod A_\atms)$.  This last fact is proven by generalizing Godsil's notion of treelike walks on a graph~$G$ to a notion of \emph{freelike walks} on a collection of \atoms $A_1, \dots, A_\atms$.\\

A talk about this work may be viewed \href{https://youtu.be/8yNH_0Pf6U4}{on YouTube}.
\end{abstract}

\setcounter{page}{0}
\thispagestyle{empty}
\newpage

\section{Introduction}
Let $X$ be an infinite graph, such as one of the eight partially sketched below.
\begin{figure}[H]
  \centering
  \includegraphics[width=.28\textwidth]{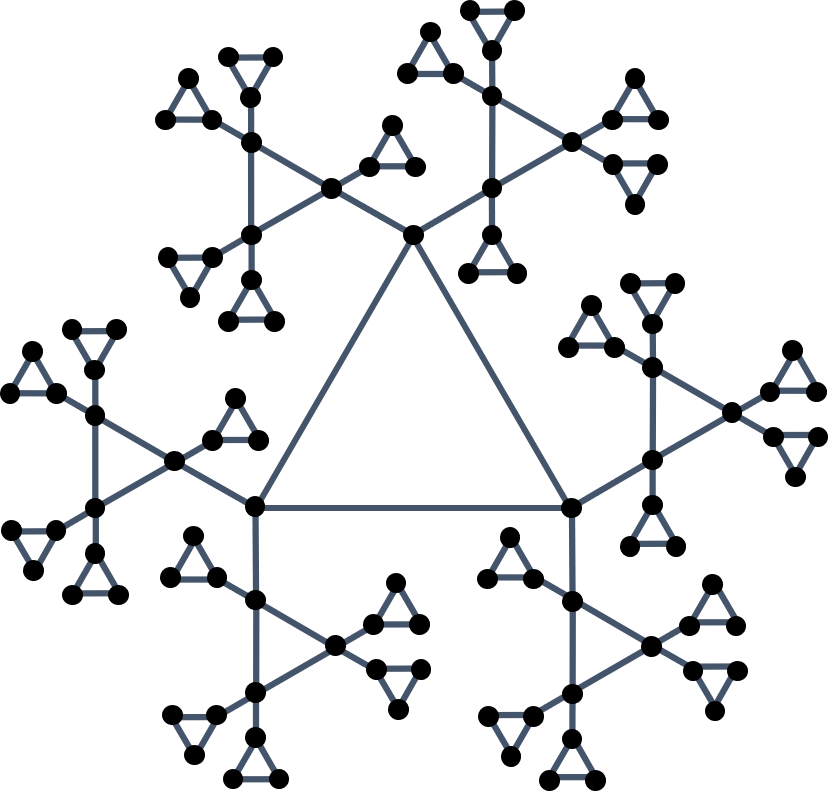}
  \qquad \qquad
  \includegraphics[width=.28\textwidth]{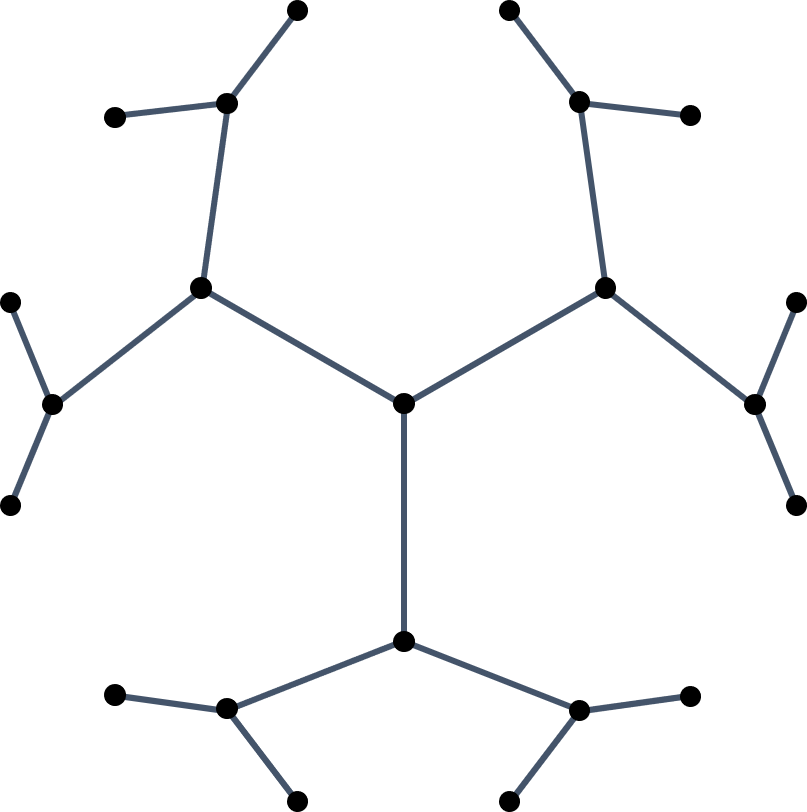} \\[.15in]

  \includegraphics[width=.28\textwidth]{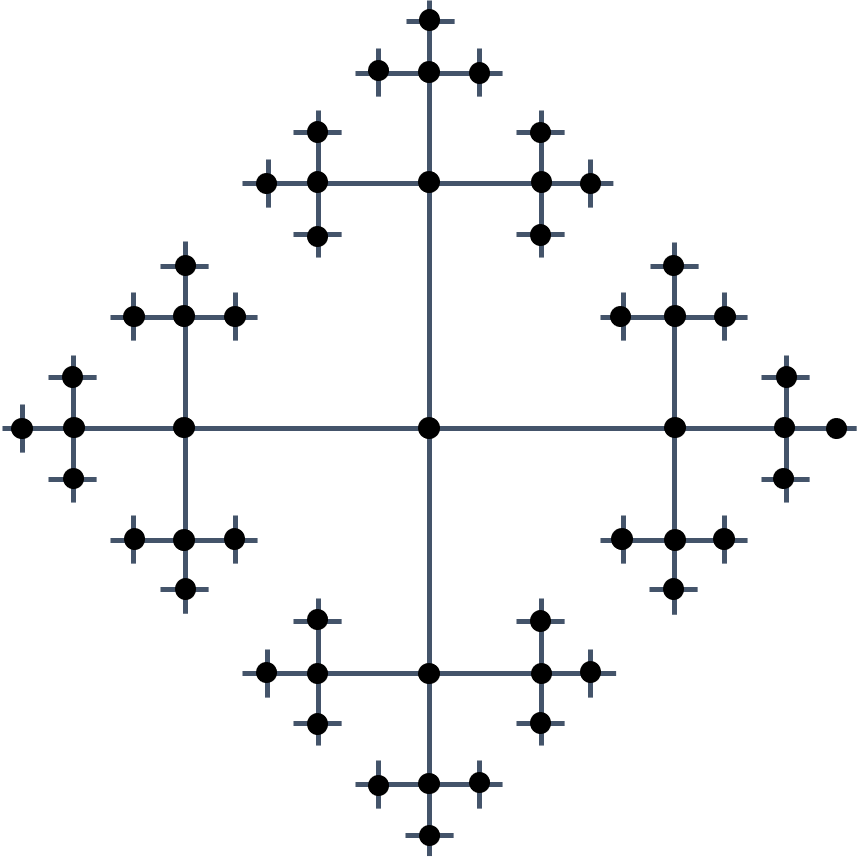}
  \qquad \qquad
  \includegraphics[width=.28\textwidth]{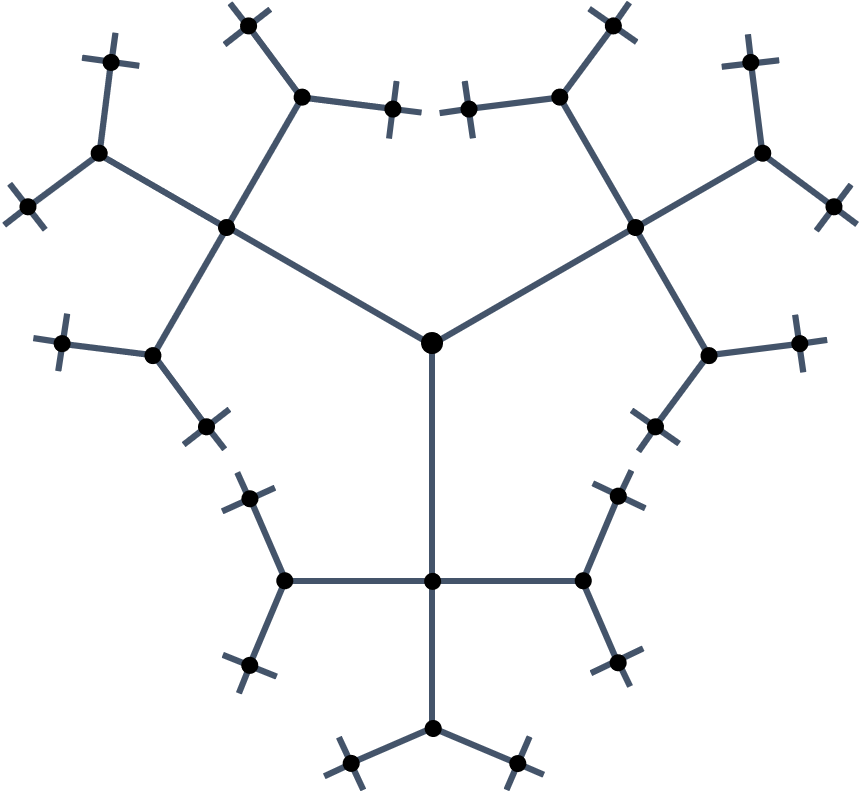}\\[.15in]

  \includegraphics[width=.28\textwidth]{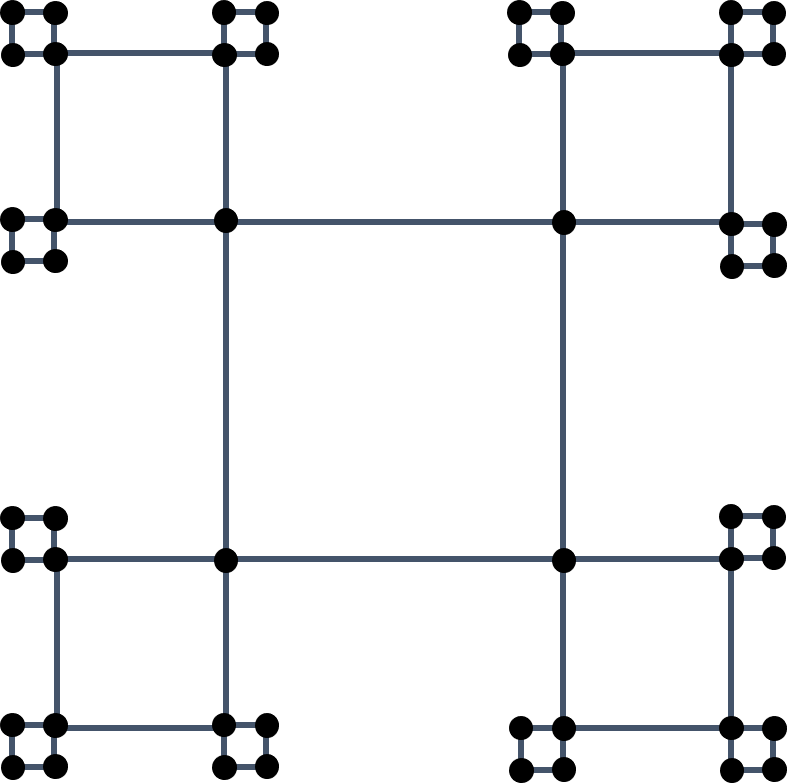}
  \qquad \qquad
  \includegraphics[width=.28\textwidth]{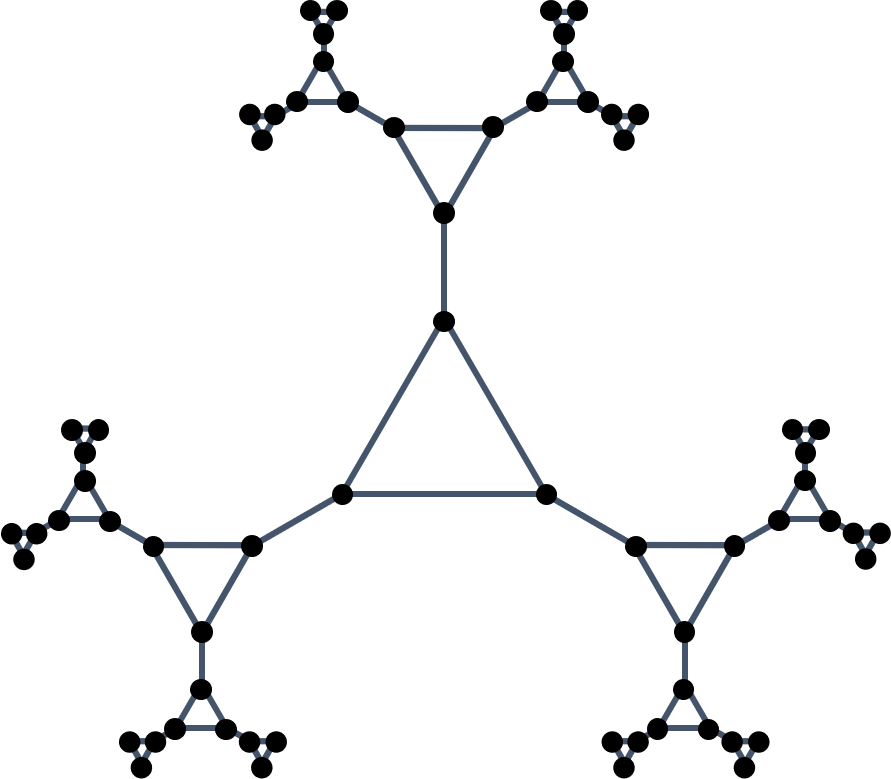}\\[.15in]

  \includegraphics[width=.28\textwidth]{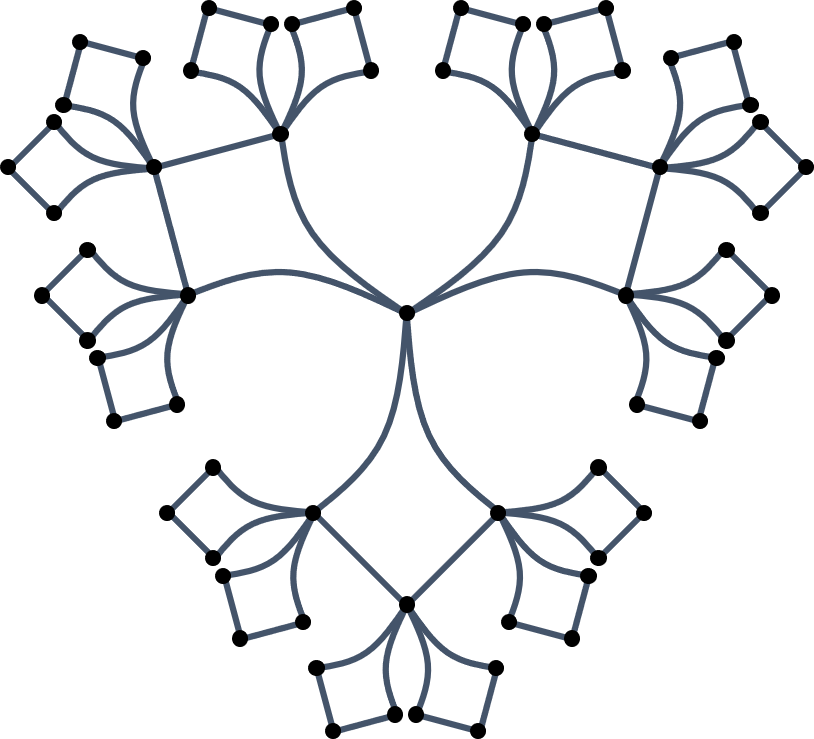}
  \qquad \qquad
  \includegraphics[width=.28\textwidth]{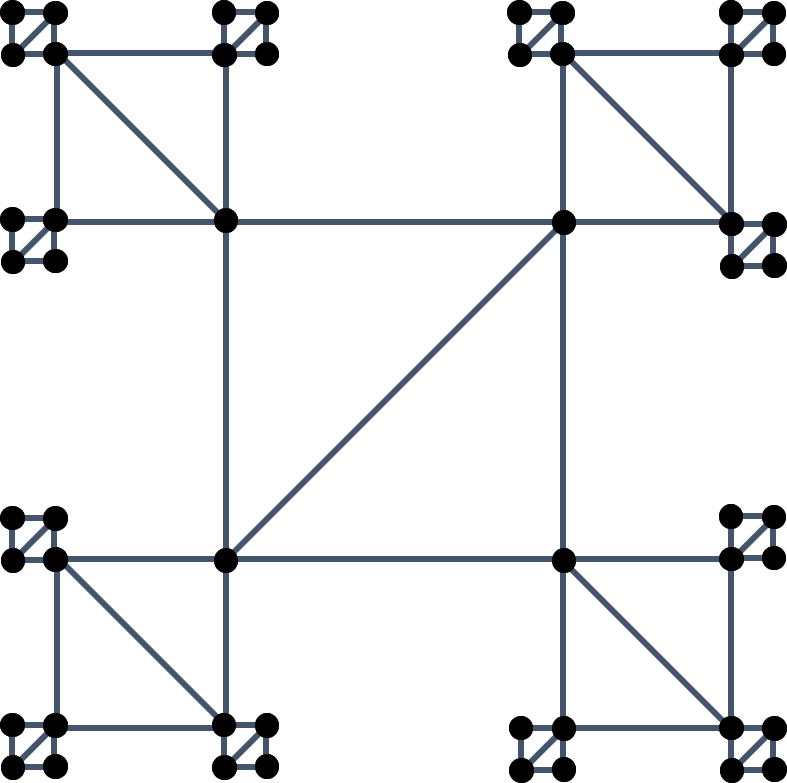}
    \caption{Some infinite graphs (with only a finite portion sketched, repeating in the obvious way).}
  \label{fig:many-products}
\end{figure}
For each such $X$, we are interested in finite graphs~$G$ that ``locally resemble''~$X$. More precisely we are interested in $\FinQuo(X)$, all the finite graphs (up to isomorphism) that are \emph{covered} by~$X$; essentially, this means that there is a graph homomorphism $X \to G$ that is a local isomorphism (see \Cref{def:cover}).  For example, if $X$ is the bottom-left graph in \Cref{fig:many-products}, then $\FinQuo(X)$ consists of the finite graphs that are collections of~$C_4$'s, with each vertex participating in three~$C_4$'s.  Or, if $X$ is the top-right graph in \Cref{fig:many-products}, namely the infinite regular tree~$\ITree_3$, the set $\FinQuo(X)$ is just the $3$-regular finite graphs.

For $\FinQuo(\ITree_3)$, a well known problem is to understand the possible expansion properties of graphs in the set; in particular, to understand the possible eigenvalues of $3$-regular graphs.  Every $3$-regular graph $G$ has a ``trivial'' eigenvalue of~$3$.  As for the next largest eigenvalue, $\lambda_2(G)$, a theorem of Alon and Boppana~\cite{Alo86} implies that for any $\eps > 0$, almost all $G \in \FinQuo(\ITree_3)$ have $\lambda_2(G) \geq 2\sqrt{2} - \eps$.  This special quantity, $2\sqrt{2}$, is precisely the \emph{spectral radius} of the infinite graph~$\ITree_3$.  (In general, the spectral radius of the infinite $\Delta$-regular tree $\ITree_\Delta$ is $2\sqrt{\Delta - 1}$.) The ``Ramanjuan question'' is to ask whether the Alon--Boppana bound is tight; that is, whether there are infinitely many $3$-regular graphs~$G$ with $\lambda_2(G) \leq 2\sqrt{2}$.  Such ``optimal spectral expanders'' are called $3$-regular \emph{Ramanujan graphs}.

In this paper, we investigate the same question for other infinite graphs~$X$, beyond just $\ITree_\Delta$ and other infinite trees.  Take again the bottom-left $X$ in \Cref{fig:many-products}, which happens to be the ``free product graph'' $C_4 \ast C_4 \ast C_4$.  Every $G \in \FinQuo(C_4 \ast C_4 \ast C_4)$ is $6$-regular, so Alon--Boppana implies that almost all these $G$ have $\lambda_2(G) \geq 2\sqrt{5} - \eps$.  But in fact, an extension of Alon--Boppana due to Grigorchuk and \.{Z}uk~\cite{GZ99} shows that almost all $G \in \FinQuo(C_4 \ast C_4 \ast C_4)$ must have $\lambda_2(G) \geq \rho_0 - \eps$, where $\rho_0 =\sqrt{5\sqrt{5}+11} > 2\sqrt{5}$ is the spectral radius of $C_4 \ast C_4 \ast C_4$.  The Ramanujan question for $X = C_4 \ast C_4 \ast C_4$ then becomes: Are there infinitely many \emph{$X$-Ramanujan graphs}, meaning graphs $G \in \FinQuo(C_4 \ast C_4 \ast C_4)$ with $\lambda_2(G) \leq \rho_0$?  We will show the answer is positive.

We may repeat the same question for, say, the sixth graph in \Cref{fig:many-products}, $X = C_3 \ast C_2$, the Cayley graph of the \emph{modular group} $\mathrm{PSL}(2,\Z)$:  We will show there are infinitely many $G \in \FinQuo(C_3 \ast C_2)$ with $\lambda_2(G)$ at most the spectral radius of $C_3 \ast C_2$, namely $\frac12 + \frac12 \sqrt{8\sqrt{2} + 13}$.

Or again, consider the top-left graph in \Cref{fig:many-products}, $X = C_3 \ast C_2 \ast C_2$, a graph determined by Paschke~\cite{Pas93} to have spectral radius $\rho_1 \approx 3.65$, the largest root of \mbox{$x^5 + x^4 - x^3 - 37x^2 - 108 x + 112$}.  Paschke showed this is the infinite graph of smallest spectral radius that is vertex-transitive, $4$-regular, and contains a triangle.  Using Paschke's work, Mohar~\cite[Proof of Theorem 6.2]{Moh10} showed that almost all finite $4$-regular graphs $G$ in which every vertex participates in a triangle must have $\lambda_2(G) \geq \rho_1 - \eps$.  We will show that, conversely, there are infinitely many such graphs~$G$ with $\lambda_2(G) \leq \rho_1$.

\paragraph{Motivations.}  We discuss here some motivations from theoretical computer science and other areas.  The traditional motivation for Ramanujan graphs is their optimal (spectral) expansion property. The usual $\ITree_\Delta$-Ramanujan graphs have either few or no short cycles. However one may conceive of situations requiring graphs with both strong spectral expansion properties and plenty of short cycles. Our $(C_3 \ast C_2 \ast C_2)$-Ramanujan graphs (for example) have this property.  Ramanujan graphs with special additional structures have indeed played a role in areas such as quantum computation~\cite{PS18} and cryptography~\cite{CFLMP18}.  Another recent application comes from a work by Koll\'{a}r--Fitzpatrick--Sarnak--Houck~\cite{KFSH19} on circuit quantum electrodynamics, where finite $3$-regular graphs with carefully controlled eigenvalue intervals (and in particular, $(C_3 \ast C_2)$-Ramanujan graphs) play a key role.

Recently, there has been a lot of interest in high-dimensional expanders (see the survey \cite{Lub17}), which are expander graphs with certain constrained local structure --- for example a 2-dimensional expander is a graph composed of triangles where the neighborhood of every vertex is an expander.  We speculate that the tools introduced in this work might be useful in constructing high dimensional expanders.

Our original motivation for investigating these questions came from the algorithmic theory of random constraint satisfaction problems (CSPs). There are certain predicates (``constraints'') on a small number of Boolean variables that are well modeled by (possibly edge-signed) graphs.  Besides the ``cut'' predicate (modeled by a single edge), examples include: the $\mathsf{NAE}_3$ (Not-All-Equals) predicate on three Boolean variables, which is modeled by the triangle graph~$C_3$; and, the the $\mathsf{SORT}_4$ predicate on four Boolean variables, which is modeled by the graph $C_4$ (with three edges ``negated'').  It is of considerable interest to understand how well efficient algorithms (e.g., eigenvalue/SDP-based algorithms) can solve large CSPs.  The most challenging instances tend to be large \emph{random} CSPs, and this motivates studying the eigenvalues of large random regular graphs composed of, say, triangles (for random $\mathsf{NAE}_3$ CSPs) or $C_4$'s (for random $\mathsf{SORT}_4$).  The methods we introduce in this paper provide a natural way (``additive lifts'') to produce such large random instances, as well as to understand their eigenvalues.  See~\cite{DMOSS19} for more details on the $\mathsf{NAE}_3$ CSP, \cite{Moh18}~for more details on the $\mathsf{SORT}_4$ CSP, and recent followup work of the authors and Paredes~\cite{MOP19} on more general CSPs, including ``Friedman's Theorem''-style results for some random CSPs.

%

\section{$\IG$-Ramanujan graphs}
In this section, $\IG = (V,E)$ (and similarly $\IG_1$, $\IG_2$, etc.) will denote a connected undirected graph on at most countably many vertices, with uniformly bounded vertex degrees, and with multiple edges and self-loops allowed.  We also identify $\IG$ with its associated adjacency matrix operator, acting on $\ell^2(V)$.  We recall some basic facts concerning spectral properties of~$\IG$; see, e.g.,~\cite{MW89,Moh82}.
\begin{definition}
    The \emph{spectrum} $\spec(\IG)$ of $\IG$ is the set of all complex $\lambda$ such that $\IG - \lambda \Id$ is not invertible. In fact, $\spec(\IG)$ is a subset of $\R$ since we're assuming $\IG$ is undirected; it is also a compact set.  When $\IG$ is finite, we extend $\spec(\IG)$ to be a multiset --- namely, the roots of the \emph{characteristic polynomial} $\charpoly{\IG}{x}$, taken with multiplicity.  In this case, we write the spectrum (eigenvalues) of~$\IG$ as $\lambda_1(\IG) > \lambda_2(\IG) \geq \cdots \geq \lambda_{|V|}(\IG)$.
\end{definition}

\begin{definition}      \label{def:specrad}
    The \emph{spectral radius} of~$\IG$ is
    \[
        \specrad(\IG) = \limsup_{t \to \infty} \left\{ (c_{uv}^{(t)})^{1/t} \right\},
    \]
    where $c_{uv}^{(t)}$ denotes the number of walks of length~$t$ in~$\IG$ from $u$ vertex $u \in V$ to vertex $v \in V$; it is not hard to show this is independent of the particular choice of $u,v$ \cite[Lemma 1.7]{Woess00}.
\end{definition}
\begin{fact}        \label{fact:specrad-basic}
    The spectral radius $\specrad(\IG)$ is also equal to the operator norm of $\IG$ acting on $\ell^2(V)$, and to $\max\{\lambda : \lambda \in \spec(\IG)\}$.  We also have $\min\{\lambda : \lambda \in \spec(\IG)\} \geq -\specrad(\IG)$.  Finally, $c_{uu}^{(t)} \leq \specrad(\IG)^t$ holds for every $u \in V$ and $t \in \N$.
\end{fact}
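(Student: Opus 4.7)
The plan is to exploit that, since $\IG$ is undirected with uniformly bounded vertex degrees, the adjacency operator $\IG$ acts as a bounded self-adjoint operator on $\ell^2(V)$, so standard Hilbert-space spectral theory applies. In particular, $\spec(\IG)$ is a compact subset of $\R$, the operator norm $\|\IG\|$ equals $\sup\{|\lambda| : \lambda \in \spec(\IG)\}$, and by the $C^*$-identity iterated, $\|\IG^t\| = \|\IG\|^t$ for every $t \geq 1$.

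Using $c_{uv}^{(t)} = \langle e_u, \IG^t e_v\rangle$ and Cauchy--Schwarz, $c_{uv}^{(t)} \leq \|\IG^t\| = \|\IG\|^t$. This yields both $\limsup_t (c_{uv}^{(t)})^{1/t} \leq \|\IG\|$ and the ``finally'' claim of the Fact, modulo identifying $\specrad(\IG) = \|\IG\|$. For the matching lower bound, I would invoke the spectral theorem to write $c_{uu}^{(t)} = \int \lambda^t\, d\mu_u(\lambda)$ for the spectral measure $\mu_u$ of $e_u$. For each $\lambda_0 \in \mathrm{supp}(\mu_u)$ and $\delta > 0$, $\mu_u(|\lambda - \lambda_0| < \delta) > 0$, so $c_{uu}^{(2t)} \geq (|\lambda_0| - \delta)^{2t} \cdot \mu_u(|\lambda - \lambda_0| < \delta)$, whence $\limsup_t (c_{uu}^{(2t)})^{1/(2t)} \geq |\lambda_0|$. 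Since $\spec(\IG) = \overline{\bigcup_u \mathrm{supp}(\mu_u)}$, taking $\sup$ over $u$ and $\lambda_0$ gives $\sup_u \limsup_t (c_{uu}^{(t)})^{1/t} \geq \|\IG\|$. To transfer this from ``some $u$'' to ``every $u$'', connectedness suffices: for any $u, u'$, with $R := d(u, u')$, one has $c_{u'u'}^{(2t + 2R)} \geq c_{u'u}^{(R)} c_{uu}^{(2t)} c_{uu'}^{(R)} \geq c_{uu}^{(2t)}$, since a shortest path gives $c_{uu'}^{(R)} \geq 1$; taking $(2t+2R)$-th roots and $\limsup$ (using $(2t+2R)/(2t) \to 1$) shows $\limsup_t (c_{u'u'}^{(t)})^{1/t} \geq \limsup_t (c_{uu}^{(t)})^{1/t}$. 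Together these yield $\specrad(\IG) = \|\IG\|$.

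For the claim $\specrad(\IG) = \max\spec(\IG)$---rather than merely $\max\{|\lambda|\}$---the positivity $c_{uu}^{(t)} \geq 0$ of walk counts is essential, since for a self-adjoint operator one only knows a priori that either $\|\IG\|$ or $-\|\IG\|$ lies in $\spec(\IG)$. Suppose for contradiction that $M := \max\spec(\IG) < \|\IG\|$; then $-\|\IG\| \in \spec(\IG)$ and some $\mu_u$ must have positive mass in every neighborhood of $-\|\IG\|$. A short computation then shows that $c_{uu}^{(2t+1)} = \int \lambda^{2t+1}\, d\mu_u$ is dominated for large $t$ by a large negative contribution from near $\lambda = -\|\IG\|$, contradicting $c_{uu}^{(2t+1)} \geq 0$. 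Finally, $\min\spec(\IG) \geq -\specrad(\IG)$ is immediate from self-adjointness once $\|\IG\| = \specrad(\IG)$ is known. I expect the most delicate step to be the identification $\specrad(\IG) = \max\spec(\IG)$, where one must combine the analytic structure provided by the spectral measure with the combinatorial positivity of walk counts to rule out the sign-reversed case.
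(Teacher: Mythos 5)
Your proof is essentially correct and, since the paper states this Fact with citations (\cite{MW89,Moh82}, and \cite[Lemma 1.7]{Woess00}) rather than proving it, there is no paper argument to compare against. Your approach --- spectral theorem plus the positivity of odd closed-walk counts to rule out the sign-reversed alternative --- is the standard one, and all the main steps (Cauchy--Schwarz for the upper bound, spectral measure lower bound for individual vertices, and the connectedness ``transfer'' of $\limsup$s via $c_{u'u'}^{(2t+2R)} \geq c_{uu}^{(2t)}$) are sound.

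One small logical slip: you write ``some $\mu_u$ must have positive mass in every neighborhood of $-\|\IG\|$.'' Since $\spec(\IG) = \overline{\bigcup_u \mathrm{supp}(\mu_u)}$ but the union need not be closed, there may be no single vertex $u$ whose spectral measure has $-\|\IG\|$ in its support; approximating points could belong to different $\mu_u$'s. The fix is routine and does not disturb the argument: fix any $m$ with $M < m < \|\IG\|$. If every $\mu_u$ gave zero mass to $[-\|\IG\|,-m]$, then (using $\mathrm{supp}(\mu_u) \subseteq \spec(\IG) \subseteq [-\|\IG\|,M]$) every $\mathrm{supp}(\mu_u)$ would lie in $(-m,M]$, so the closed union would miss $-\|\IG\|$, a contradiction. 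Hence some fixed $u$ has $\alpha := \mu_u([-\|\IG\|,-m]) > 0$, and then $c_{uu}^{(2t+1)} \leq -m^{2t+1}\alpha + M^{2t+1} < 0$ for $t$ large, as you intended. A similar $\varepsilon$-approximation is implicitly needed when passing from $\sup_u \limsup_t (c_{uu}^{(t)})^{1/t} \geq \|\IG\|$ to a bound for a fixed $u$, but your transfer argument handles this once it is run at level $\|\IG\|-\varepsilon$ and $\varepsilon \to 0$ is taken at the end.
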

\begin{fact}
    If $\IG$ is bipartite then its spectrum is symmetric about~$0$; i.e., $\lambda \in \spec(\IG)$ if and only if $-\lambda \in \spec(\IG)$.  In particular, $-\specrad(\IG) \in \spec(\IG)$.
\end{fact}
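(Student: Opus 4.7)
The plan is to exhibit an explicit unitary operator on $\ell^2(V)$ that conjugates $\IG$ to $-\IG$; bipartiteness makes this easy. Since $\IG$ is bipartite, its vertex set decomposes as $V = L \sqcup R$ with every edge crossing between $L$ and~$R$. Define the bounded diagonal operator $S$ on $\ell^2(V)$ by $S\ket{u} = +\ket{u}$ for $u \in L$ and $S\ket{u} = -\ket{u}$ for $u \in R$. Since vertex degrees are uniformly bounded, $\IG$ is a bounded operator, and $S$ is clearly self-adjoint with $S^2 = \Id$, so $S$ is unitary with $S^{-1} = S$.

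Next I would check, directly from the bipartite structure, that $S \IG S = -\IG$. Indeed, for any edge $\{u,v\}$ of~$\IG$ exactly one endpoint lies in~$L$ and the other in~$R$, so the matrix entry $\bra{u}S\IG S\ket{v} = (\pm 1)(\pm 1) \bra{u} \IG \ket{v}$ picks up exactly one sign flip whenever $\bra{u}\IG\ket{v} \neq 0$. Equivalently, writing
\[
    \IG = \begin{pmatrix} 0 & B \\ B^{*} & 0 \end{pmatrix}, \qquad S = \begin{pmatrix} \Id_L & 0 \\ 0 & -\Id_R \end{pmatrix},
\]
one computes $S \IG S = -\IG$ immediately. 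Consequently, for every $\lambda \in \C$,
\[
    \IG - \lambda \Id \;=\; -S(\IG + \lambda \Id)S,
\]
and so $\IG - \lambda\Id$ is invertible if and only if $\IG + \lambda \Id$ is invertible (since $S$ is a bounded invertible operator with bounded inverse). By the definition of $\spec(\IG)$, this gives $\lambda \in \spec(\IG) \iff -\lambda \in \spec(\IG)$, which is the desired symmetry about~$0$.

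For the ``in particular'' clause, I would just invoke \Cref{fact:specrad-basic}, which says $\specrad(\IG) = \max\{\lambda : \lambda \in \spec(\IG)\}$; by the symmetry just established, $-\specrad(\IG)$ also lies in $\spec(\IG)$.

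There is essentially no obstacle: the entire argument is the standard one-line conjugation trick, the only mildly delicate point being that we are working with operators on $\ell^2(V)$ rather than finite matrices, which is handled cleanly by noting that $S$ is a bounded unitary operator so conjugation by $S$ preserves invertibility and hence preserves the spectrum.
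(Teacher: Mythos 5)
The paper states this as a \emph{Fact} without supplying a proof, so there is no in-paper argument to compare against. Your proof is the standard one and is correct: the sign operator $S$ with $S\IG S = -\IG$ is exactly the usual device, and you rightly note the one non-finite subtlety — that $S$ is a bounded unitary on $\ell^2(V)$, so conjugation by $S$ preserves invertibility of $\IG - \lambda\Id$ — and correctly derive the ``in particular'' clause from \Cref{fact:specrad-basic}, which guarantees $\specrad(\IG)\in\spec(\IG)$ (the max is attained since $\spec(\IG)$ is compact).
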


\begin{definition} \label{def:cover}
    For graphs $\IG_1$, $\IG_2$, we say that $\IG_1$ is a \emph{cover} 
    of $\IG_2$ (and that $\IG_2$ is a \emph{quotient} of $\IG_1$) if there exists $f = (f_V, f_E)$ where $f_V : V_1 \to V_2$ and $f_E : E_1 \to E_2$ are surjections satisfying $f_E(\{u, v\}) = \{f_V(u),f_V(v)\}$, and $f_E$ bijects the edges incident to $u$ in $V_1$ with the edges incident to $f_V(u)$ in $V_2$. The map $f$ is also called a \emph{covering}.
\end{definition}
As an example, one may check that the infinite graph depicted on the left in \Cref{fig:cover-example} is a cover of the finite graph depicted on the right.
\begin{figure}[H]
  \centering
  \includegraphics[width=.3\textwidth]{product-figures/eg1.png} \qquad \qquad \includegraphics[width=.3\textwidth]{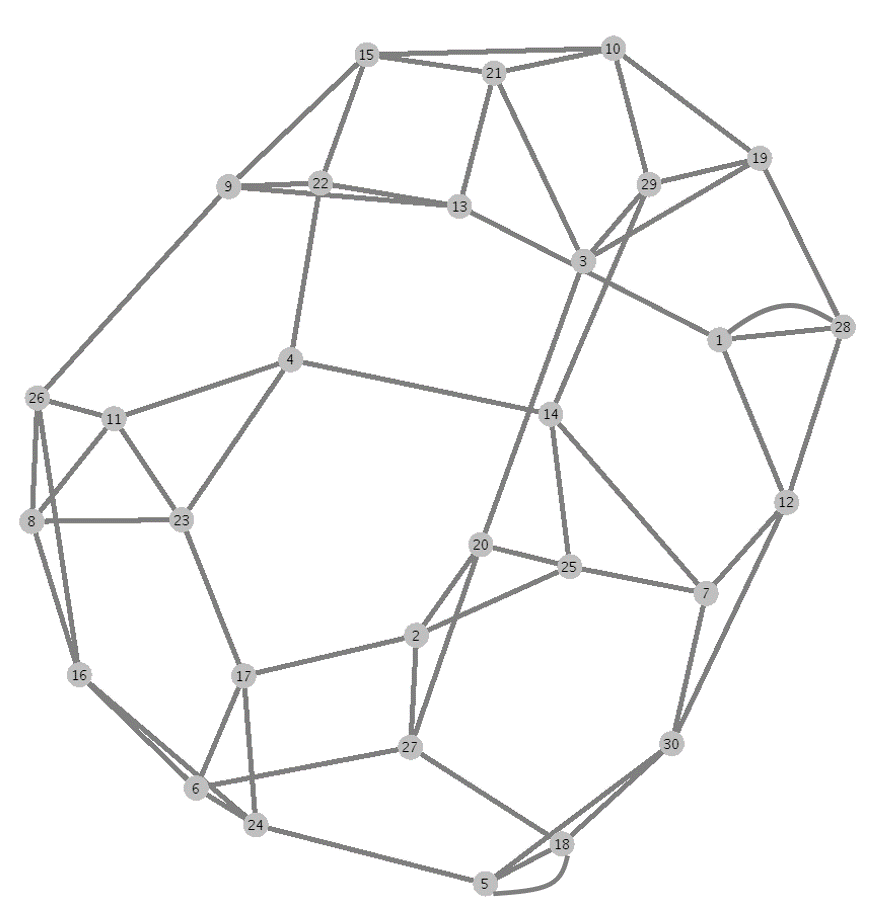}

  \caption{The infinite graph on the left covers the finite graph on the right.}
  \label{fig:cover-example}
\end{figure}
\begin{fact}                    \label{fact:greenberg1}
    If $\IG_1$ is a cover of $\IG_2$, then $\specrad(\IG_1) \leq \specrad(\IG_2)$.  Furthermore, if $\IG_1$ is finite then $\specrad(\IG_1) = \specrad(\IG_2)$.
\end{fact}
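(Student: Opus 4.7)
The statement has two parts, and I would prove them separately. For the general inequality $\specrad(\IG_1) \leq \specrad(\IG_2)$, my plan is to use the walk-counting definition of spectral radius (\Cref{def:specrad}). The key tool is the \emph{unique path lifting} property of covering maps: given any walk $w_0 e_1 w_1 \cdots e_t w_t$ in $\IG_2$ beginning at $w_0 = f_V(u)$, there is a unique walk in $\IG_1$ beginning at $u$ that projects onto it. The lift is constructed inductively --- at step $i$, the current vertex $u_i$ lies over $w_i$, and the bijection between edges incident to $u_i$ in $\IG_1$ and edges incident to $w_i$ in $\IG_2$ (guaranteed by $f_E$) designates a unique edge at $u_i$ projecting to $e_{i+1}$, whose other endpoint becomes $u_{i+1}$. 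Projecting closed walks of length $t$ at $u$ in $\IG_1$ to walks of length $t$ at $f_V(u)$ in $\IG_2$ thus yields the inequality
\[
    c^{(t)}_{uu}(\IG_1) \;\leq\; c^{(t)}_{f_V(u), f_V(u)}(\IG_2) \qquad \text{for every } t \in \N.
\]
(Not every closed walk at $f_V(u)$ lifts to a closed walk at $u$: it might instead lift to a walk ending at some other preimage of $f_V(u)$.) Taking $t$-th roots and $\limsup_{t\to\infty}$ of both sides completes this part.

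For the equality when $\IG_1$ is finite, note that $\IG_2$ is also finite since $f_V$ is surjective. My plan is to show that every eigenvalue of $\IG_2$ is an eigenvalue of $\IG_1$, by pulling back eigenvectors. Concretely, if $\phi : V_2 \to \R$ satisfies $\IG_2 \phi = \lambda \phi$, then $\psi := \phi \circ f_V$ is an eigenvector of $\IG_1$ with the same eigenvalue $\lambda$:
\[
    (\IG_1 \psi)(v) \;=\; \sum_{u \sim v \text{ in } \IG_1} \phi(f_V(u)) \;=\; \sum_{w \sim f_V(v) \text{ in } \IG_2} \phi(w) \;=\; \lambda\,\phi(f_V(v)) \;=\; \lambda\,\psi(v),
\]
where the middle equality uses the edge-neighborhood bijection at $v$ provided by $f_E$. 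In particular $\specrad(\IG_2) = \lambda_1(\IG_2)$, being an eigenvalue of $\IG_2$, is then an eigenvalue of $\IG_1$, giving $\specrad(\IG_1) \geq \specrad(\IG_2)$. Combined with the first part, this yields equality.

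Both halves are essentially mechanical once one trusts the local structure encoded in \Cref{def:cover}. The only subtle point --- and the main (if minor) obstacle --- is ensuring that the walk-lifting procedure and the adjacency-matrix pullback remain well-defined in the presence of multiple edges and self-loops. But the very formulation of the cover as a pair $(f_V, f_E)$ with $f_E$ an edge bijection locally at every vertex is designed precisely so that this works without any extra bookkeeping.
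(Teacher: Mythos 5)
Your proof is correct, and the first half is exactly the paper's argument: the paper's one-line justification ("distinct closed walks in $\IG_1$ map to distinct closed walks in $\IG_2$") is precisely the injectivity of projection that you derive from unique path lifting, followed by the same $\limsup$ comparison. For the second half the paper simply cites Greenberg's thesis, whereas you supply a self-contained proof via the standard eigenvector pullback $\psi = \phi \circ f_V$; this is the expected folklore argument and works as stated, including the subtleties with multi-edges (the edge bijection $f_E$ is exactly what makes the middle equality in your adjacency computation go through).
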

The first statement in the above fact is an easy and well known consequence of \Cref{def:specrad}, since distinct closed walks in $\IG_1$ map to distinct closed walks in~$\IG_2$.  The second statement may be considered folklore, and appears in Greenberg's thesis~\cite{Gre95}. We now review some definitions and results from that thesis. (See also~\cite{LN98}.)
\begin{definition}
    Given $\IG$, we write $\FinQuo(\IG)$ to denote the family of finite graphs\footnote{Up to isomorphism.} covered by~$\IG$.  (For this definition, we are generally interested in infinite graphs~$\IG$.) The set $\FinQuo(\IG)$ may be empty; but otherwise,  by a combination of the fact that $G_1$ and $G_2$ have the same universal covering tree, the fact that any two graphs with the same universal covering tree have a common finite cover (due to \cite{Lei82}), and \Cref{fact:greenberg1}, $\specrad(G_1) = \specrad(G_2)$ for all $G_1, G_2 \in \FinQuo(\IG)$.  When \mbox{$\FinQuo(\IG) \neq \emptyset$} we write $\chi(\IG)$ for the common spectral radius of all $G \in \FinQuo(\IG)$.
\end{definition}
\begin{remark}
    If $\IG$ is $\Delta$-regular and $\FinQuo(\IG) \neq \emptyset$ then $\chi(\IG) = \Delta$ (because all $G \in \FinQuo(\IG)$ are $\Delta$-regular).
\end{remark}
We also remark that it is not particularly easy to decide whether or not $\FinQuo(\IG) = \emptyset$, given~$\IG$.  In case $\IG$ is an infinite tree, it is known that $\FinQuo(\IG)$ is nonempty if and only if $\IG$ is the universal cover tree of some finite connected graph~\cite{BK90}.

\subsection{Alon--Boppana-type theorems}
When $\FinQuo(\IG)$ is nonempty, we are interested in the spectrum of graphs $G$ in $\FinQuo(\IG)$.  Each such $G$ will have $\lambda_1(G) = \chi(\IG)$, so we consider the remaining eigenvalues $\lambda_2(G) \geq \cdots \geq \lambda_n(G)$, particularly $\lambda_2(G)$.  In the case that $G$ is $\Delta$-regular, the smallness of  $\lambda_2(G)$ (and $|\lambda_n(G)|$) controls the expansion of~$G$.  If $G$ is bipartite, then $\lambda_n(G) = -\chi(\IG)$ and the bipartite expansion is controlled just by $\lambda_2(G)$.

A notable theorem of Alon and Boppana~\cite{Alo86} is that for all $\Delta \geq 2$ and all $\eps > 0$, there are only finitely many $\Delta$-regular graphs~$G$ with $\lambda_2(G) \leq 2\sqrt{\Delta - 1} - \eps$.  It is well known that $2\sqrt{\Delta - 1}$ is the spectral radius of the infinite $\Delta$-regular tree, $\specrad(\ITree_\Delta)$.  Since $\FinQuo(\ITree_\Delta)$ is the set of all finite connected $\Delta$-regular graphs, the Alon--Boppana Theorem may also be stated as
\[
    \liminf_{n \to \infty} \lambda_2(G_n) \geq \specrad(\ITree_\Delta) \quad \text{for all sequences $(G_n)$ in $\FinQuo(\ITree_\Delta)$.}
\]
(Notice that if we restrict attention to bipartite $\Delta$-regular graphs~$G$ on $n$ vertices, we'll have $\lambda_n(G) = -\Delta$ and we can conclude that there are only finitely many such graphs that have $\lambda_{n-1}(G) \geq -\specrad(\ITree_\Delta) + \eps$.)  Thus the Alon--Boppana gives a limitation on the spectral expansion quality of $\Delta$-regular graphs and bipartite graphs.

There are several known extensions and strengthenings of the Alon--Boppana Theorem.  One strengthening (usually attributed to Serre~\cite{Ser97}) says that for \emph{any} $k \geq 2$, there are only finitely many $\Delta$-regular $G$ with $\lambda_k(G) \leq  \specrad(\ITree_\Delta) - \eps$.  Indeed, in an $n$-vertex $\Delta$-regular graph~$G$, at least $cn$ eigenvalues from $\spec(G_n)$ must be at least $\specrad(\ITree_\Delta) - \eps$, for some $c = c(\Delta,\eps) > 0$.

Another interesting direction, due to Feng and Li~\cite{FL86}, concerns the same questions for $(\Delta_1,\Delta_2)$-biregular graphs.  (This includes the case of $\Delta$-regular bipartite graphs, by taking $\Delta_1 = \Delta_2 = \Delta$.)  These are precisely the graphs $\FinQuo(\IG)$ when $\IG$ is the infinite $(\Delta_1,\Delta_2)$-biregular tree $\ITree_{\Delta_1,\Delta_2}$.  It holds that $\chi(\ITree_{\Delta_1,\Delta_2}) = \sqrt{\Delta_1}\sqrt{\Delta_2}$ and $\specrad(\ITree_{\Delta_1,\Delta_2}) =  \sqrt{\Delta_1 - 1} + \sqrt{\Delta_2 - 1}$.  Feng and Li showed the Alon--Boppana Theorem analogue in this setting,
\[
    \liminf_{n \to \infty} \lambda_2(G_n) \geq \specrad(\ITree_{\Delta_1,\Delta_2}) \quad \text{for all sequences $(G_n)$ in $\FinQuo(\ITree_{\Delta_1,\Delta_2})$;}
\]
they also showed the Serre-style strengthening.  Mohar gave certain generalizations of these results to \emph{multipartite} graphs~\cite{Moh10}.

In these Alon--Boppana(--Serre)-type results for quotients of infinite trees $\IG$, the $\liminf$ lower bound on second eigenvalues for graphs in $\FinQuo(\IG)$ has  been $\specrad(\IG)$.  In fact, it turns out that this phenomenon holds even when $\IG$ is not a tree.  The following theorem was first proved by Greenberg~\cite{Gre95} (except that he considered the absolute values of eigenvalues); see \cite[Chap.~9, Thm.~13]{Li96} and Grigorchuk--\.{Z}uk~\cite{GZ99} for proofs:
\begin{theorem}                                     \label{thm:greenberg}
    For any $\IG$ and any $\eps > 0$, there is a constant $c > 0$ such that for all $n$-vertex $G \in \FinQuo(\IG)$ it holds that at least $cn$ eigenvalues from $\spec(G_n)$ are at least $\specrad(\IG) - \eps$.  (In particular, $\lambda_2(G)$ --- and indeed, $\lambda_k(G)$ for any $k \geq 2$ --- is at least $\specrad(\IG)-\eps$ for all but finitely many $G \in \FinQuo(\IG)$.)
\end{theorem}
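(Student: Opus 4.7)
The plan is to prove the theorem by the moment (trace) method. Let $\rho = \specrad(\IG)$. The main technical goal is to establish a lower bound of the form $\tr(A_G^{2t}) \geq n (\rho - \eps/2)^{2t}$ for an appropriate $t = t(\eps, \IG)$, uniformly over $G \in \FinQuo(\IG)$; given this, the eigenvalue count will follow by standard manipulations.

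For the trace bound, I would start with the identity $\tr(A_G^{2t}) = \sum_{v \in V(G)} c_{G,vv}^{(2t)}$. The covering map $f : \IG \to G$ implies that closed walks of length $2t$ in $\IG$ starting at any preimage $u \in f^{-1}(v) \subseteq V(\IG)$ project injectively to closed walks of length $2t$ at $v$ in $G$ (by unique path-lifting along covers, together with the bijection of edges at each fiber from \Cref{def:cover}), yielding $c_{G,vv}^{(2t)} \geq c_{\IG,uu}^{(2t)}$. It remains to lower bound $c_{\IG,uu}^{(2t)}$ uniformly in $u \in V(\IG)$. Pointwise, $(c_{\IG,uu}^{(2t)})^{1/(2t)} \to \rho$ follows from \Cref{def:specrad} and a ``prepend/append'' argument: by definition of spectral radius, there is $u_0 \in V(\IG)$ and $t_0 = t_0(\eps)$ with $c_{\IG,u_0 u_0}^{(2t_0)} \geq (\rho - \eps/4)^{2t_0}$; one then prepends any $u$-to-$u_0$ path of length $r$ to a closed walk at $u_0$ and appends its reverse to obtain $c_{\IG,uu}^{(2(t_0+r))} \geq c_{\IG,u_0 u_0}^{(2t_0)}$. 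To upgrade this to a \emph{uniform} bound in $u$, I would invoke the hypothesis $\FinQuo(\IG) \neq \emptyset$, which forces the local structure of $\IG$ to be ``recurrent'': in the universal-cover case, $\mathrm{Aut}(\IG)$ acts on $V(\IG)$ with finitely many orbits, so $u \mapsto c_{\IG,uu}^{(2t)}$ takes only finitely many values and a single $t = t(\eps, \IG)$ suffices.

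Given the trace bound, the eigenvalue count is standard. Each $\lambda_i = \lambda_i(G)$ lies in $[-\chi(\IG), \chi(\IG)]$. Letting $k$ denote the number of $i$ with $|\lambda_i| \geq \rho - \eps$, the upper bound $\tr(A_G^{2t}) \leq k \chi(\IG)^{2t} + n (\rho - \eps)^{2t}$ combined with the trace lower bound gives $k \geq c \cdot n$ for $c := \bigl((\rho - \eps/2)^{2t} - (\rho - \eps)^{2t}\bigr)/\chi(\IG)^{2t} > 0$. To restrict the count to eigenvalues $\geq \rho - \eps$ (and exclude the possibility that many of the large-magnitude eigenvalues are in fact very negative), I would rerun the argument with $q(A_G) = (A_G + \chi(\IG)\Id)^{2t}$ in place of $A_G^{2t}$. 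The polynomial $q(x) = (x + \chi(\IG))^{2t}$ is monotone increasing on $[-\chi(\IG), \chi(\IG)]$, vanishes at $-\chi(\IG)$, and satisfies $q(\rho - \eps)/q(-(\rho - \eps)) = \bigl((\chi(\IG)+\rho-\eps)/(\chi(\IG)-\rho+\eps)\bigr)^{2t} \gg 1$ for $t$ large, so the positive-large eigenvalues dominate $\tr(q(A_G))$. The principal obstacle throughout is the uniform-in-$u$ step in the trace bound; the fully general case (for arbitrary $\IG$, beyond the universal-cover setting) is carried out in detail by Greenberg~\cite{Gre95} and Grigorchuk--\.{Z}uk~\cite{GZ99}.
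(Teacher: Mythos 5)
The paper does not supply its own proof of this theorem --- it cites Greenberg's thesis~\cite{Gre95}, Li's book~\cite{Li96}, and Grigorchuk--\.Zuk~\cite{GZ99} for proofs --- so there is no internal argument to compare against; I am evaluating your sketch on its own terms.

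Your overall strategy (trace method, compare closed-walk counts in $G$ with those in $\IG$ via the covering map, then convert a trace lower bound into an eigenvalue count) is the standard and correct route. The walk-count comparison $c_{G,vv}^{(j)} \geq c_{\IG,uu}^{(j)}$ for $u \in f^{-1}(v)$ is right and the reason you give (the local edge bijection forces injectivity of the projection of walks started at a fixed $u$) is exactly the point. The device of replacing $A_G^{2t}$ by $q(A_G) = (A_G + \chi(\IG)\Id)^{2t}$ to pass from a count of eigenvalues with $|\lambda_i| \geq \rho - \eps$ to a count of eigenvalues with $\lambda_i \geq \rho - \eps$ is also sound: one checks $\langle \delta_v, q(A_G)\delta_v\rangle \geq \langle \delta_u, q(A_\IG)\delta_u\rangle$ termwise in the binomial expansion since every coefficient $\binom{2t}{j}\chi^{2t-j}$ is nonnegative, and on the upper-bound side each of the $k$ eigenvalues $\geq \rho-\eps$ contributes at most $(2\chi)^{2t}$ while the rest contribute at most $(\chi+\rho-\eps)^{2t}$, which is what separates the two regimes.

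The one genuine gap is the one you yourself flag: the uniform-in-$u$ lower bound $c_{\IG,uu}^{(2t)} \geq (\rho-\eps/2)^{2t}$ with a single $t = t(\eps,\IG)$. The pointwise claim is fine --- $c_{\IG,uu}^{(2t)}$ is supermultiplicative in $t$, so $(c_{\IG,uu}^{(2t)})^{1/2t}$ increases to $\rho$ for each fixed $u$ --- and the prepend/append trick you describe cannot by itself rescue uniformity, because the insertion cost $r = d(u,u_0)$ is unbounded over an infinite $\IG$, so the bound $c_{\IG,uu}^{(2(t_0+r))} \geq (\rho - \eps/4)^{2t_0}$ degrades to nothing as $r \to \infty$. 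Your finitely-many-orbits argument closes the gap when $\IG$ is quasi-transitive (in particular for universal covering trees and for the additive products that the paper actually cares about), but quasi-transitivity of $\IG$ is not something the paper assumes, and it is not obviously a consequence of $\FinQuo(\IG) \neq \emptyset$: the covering $\IG \to G_0$ partitions $V(\IG)$ into finitely many fibers, but without regularity of the cover the deck group need not act transitively on fibers, and one would still have to show the function $u \mapsto c_{\IG,uu}^{(2t)}$ takes only finitely many values (or is otherwise uniformly bounded below). You are honest about this, and deferring the fully general case to~\cite{Gre95,GZ99} is consistent with how the paper itself treats the theorem; just be aware that this is where the actual work in those references lies, and that your sketch as written establishes the result only under an additional homogeneity hypothesis on $\IG$.
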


\subsection{Ramanujan and $\IG$-Ramanujan graphs}

The original Alon--Boppana Theorem showed that, for any $\eps > 0$, one cannot have infinitely many (distinct) $\Delta$-regular graphs~$G$ with $\lambda_2(G) \leq 2\sqrt{\Delta-1} - \eps$.  But can one have infinitely many $\Delta$-regular graphs~$G$ with $\lambda_2(G) \leq 2\sqrt{\Delta-1}$?  Such graphs are called \emph{($\Delta$-regular) Ramanujan graphs}, and infinite families of them were first constructed (for $\Delta-1$ prime) by Lubotzky--Phillips--Sarnak~\cite{LPS88} and by Margulis~\cite{Mar88}. Let us clarify here the several slightly different definitions of Ramanujan graphs.
\begin{definition}
    We shall call an $n$-vertex, $\Delta$-regular graph $G$ (with $\Delta \geq 3$) \emph{(one-sided) Ramanujan} if $\lambda_2(G) \leq 2\sqrt{\Delta - 1}$, and \emph{two-sided Ramanujan} if in addition $\lambda_n(G) \geq -2\sqrt{\Delta -1}$.  If $G$ is bipartite and (one-sided) Ramanujan, we will call it \emph{bipartite Ramanujan}.  (A bipartite graph cannot be two-sided Ramanujan, as $\lambda_n(G)$ will always be $-\Delta$.)
\end{definition}
The Lubotzky--Phillips--Sarnak--Margulis constructions give 
infinitely many $\Delta$-regular two-sided Ramanujan graphs whenever $\Delta-1$ is a prime congruent to~$1$ mod~$4$, and also infinitely many $\Delta$-regular bipartite Ramanujan graphs in the same case.  By 1994, Morgenstern~\cite{Mor94} had extended their methods to obtain the same results only assuming that $\Delta - 1$ is a prime power.  It should be noted that these constructions produced Ramanujan graphs only for certain numbers of vertices~$n$ (namely all $n = p(\Delta)$, for certain fixed polynomials $p$).

To summarize these results: for all $\Delta$ with $\Delta-1$ a prime power, there are infinitely many $G \in \FinQuo(\ITree_{\Delta})$ with $\lambda_2(G) \leq \specrad(\ITree_{\Delta})$.  (We will later discuss the improvement to these results by Marcus, Spielman, and Srivastava \cite{MSS15a,MSS15d}.)  Based on Greenberg's \Cref{thm:greenberg}, it is very natural to ask if analogous results are true not just for other $\Delta$, but for other infinite~$\IG$.  This was apparently first asked for the infinite biregular tree $\ITree_{\Delta_1,\Delta_2}$ by Hashimoto~\cite{Has89} (later also by Li and Sol\'{e}~\cite{LS96}).\footnote{In fact, they defined a notion of Ramanujancy for $(\Delta_1,\Delta_2)$-biregular graphs $G$ that has a flavor even stronger than that of two-sidedness; they required $\spec(G) \setminus \{\pm \chi(\ITree_{\Delta_1,\Delta_2})\} \subseteq \spec(\ITree_{\Delta_1,\Delta_2})$; in other words, for all $\lambda \in \spec(G)$ with $\lambda \neq \pm \sqrt{\Delta_1\Delta_2}$, not only do we have $|\lambda| \leq \sqrt{\Delta_1-1} + \sqrt{\Delta_2-1}$, but also $|\lambda| \geq |\sqrt{\Delta_1-1} - \sqrt{\Delta_2-1}|$.  This definition is related to the Riemann Hypothesis for the zeta function of graphs.} More generally, Grigorchuk and \.Zuk~\cite{GZ99} made (essentially\footnote{Actually, they worked with normalized adjacency matrices rather than unnormalized ones; the definitions are equivalent in the case of regular graphs.  They also only defined one-sided Ramanujancy.}) the following definition:
\begin{definition}  \label{def:x-ramanujan}
    Let $\IG$ be an infinite graph and let $G \in \FinQuo(\IG)$.  We say that $G \in \FinQuo(\IG)$ is \emph{(one-sided) $\IG$-Ramanujan} if $\lambda_2(G) \leq \specrad(\IG)$.  The notions of \emph{two-sided $\IG$-Ramanujan} and \emph{bipartite $\IG$-Ramanujan} (for bipartite~$\IG$) are defined analogously.
\end{definition}
Note that here the notion of Ramanujancy is tied to the infinite covering graph~$\IG$; see Clark~\cite[Sec.~5]{Cla07} for further advocacy of this viewpoint (albeit only for the case that $\IG$ is a tree). Starting with Greenberg and Lubotzky~\cite{Lub94}, a number of authors defined a fixed finite (irregular) graph~$G$ to be ``Ramanujan'' if $\lambda_2(G) \leq \specrad(\UCT(G))$, where $\UCT(G)$ denotes the universal cover tree of~$\IG$~\cite{Ter10}.  However  \Cref{def:x-ramanujan} takes a more general approach, allowing us to ask the usual Ramanujan question:
\begin{question}    \label{ques:inf-x}
    Given an infinite $\IG$, are there infinitely many $\IG$-Ramanujan graphs?
\end{question}

Given $\IG$, an obvious necessary condition for a positive answer to this question is \mbox{$\FinQuo(\IG) \neq \emptyset$}; one should at least have the existence of infinitely many finite graphs covered by~$\IG$.  However this is known to be an insufficient condition, even when $\IG$ is a tree:
\begin{theorem}                                     \label{thm:lubotzky-nagnibeda}
    (Lubotzky--Nagnibeda~\cite{LN98}.)  There exists an infinite tree~$\IG$ such that $\FinQuo(\IG)$ is infinite but contains no $\IG$-Ramanujan graphs.
\end{theorem}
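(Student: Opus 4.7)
My plan is to proceed by explicit construction: build a specific infinite tree $\IG$ and verify both properties. I would take $\IG = \UCT(G_0)$ as the universal covering tree of a carefully chosen finite connected graph~$G_0$. The choice of $G_0$ is delicate -- it must be irregular enough that $\specrad(\IG) < \chi(\IG) = \lambda_1(G_0)$ strictly (regular $G_0$ never works, since the regular tree $\ITree_\Delta$ admits Ramanujan quotients by Lubotzky--Phillips--Sarnak), but also structurally constrained enough that no finite quotient can smooth the irregularity into a Ramanujan spectrum.

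Infiniteness of $\FinQuo(\IG)$ is the easy part: $G_0 \in \FinQuo(\IG)$ gives non-emptiness, and exhibiting an infinite family of non-isomorphic $n$-fold covers of $G_0$ (each again a finite quotient of $\IG$) gives infinitely many members.

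The main content is showing $\lambda_2(G) > \specrad(\IG)$ for every $G \in \FinQuo(\IG)$. By the Courant--Fischer variational principle, it suffices to exhibit a test function $f \in \R^{V(G)}$ that is orthogonal to the Perron eigenvector of $G$ and whose Rayleigh quotient $\langle f, A_G f\rangle / \|f\|^2$ strictly exceeds $\specrad(\IG)$. I would build $f$ using Leighton's common-cover theorem (invoked in the excerpt when defining $\chi(\IG)$): $G$ and $G_0$ share a common finite cover $\widetilde{G}$ with covering maps $\pi:\widetilde{G}\to G$ and $\sigma:\widetilde{G}\to G_0$. Pulling the Perron eigenvector $u$ of $G_0$ back along $\sigma$ produces an eigenfunction of $A_{\widetilde{G}}$ at eigenvalue $\chi(\IG) > \specrad(\IG)$, and then a suitable \emph{signed} pushforward along $\pi$ (for example, a difference over deck transformations of $\widetilde{G}\to G$ that do not descend along $\sigma$) yields $f$, orthogonal to the Perron eigenvector of $G$ and with Rayleigh quotient close to~$\chi(\IG)$.

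The main obstacle is ensuring this signed pushforward is nonzero and its Rayleigh quotient stays bounded below by a single constant strictly exceeding $\specrad(\IG)$, \emph{uniformly over all} $G \in \FinQuo(\IG)$. This uniformity requires designing $G_0$ so that its Perron eigenvector carries a persistent ``asymmetric'' mode that is not averaged out under any covering; verifying this uniformly across the infinite family is the technical heart of the argument, and here I would follow Lubotzky--Nagnibeda's specific combinatorial construction rather than attempt to re-derive the uniform gap from scratch.
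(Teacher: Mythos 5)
A point of orientation first: the paper does not prove this theorem; it is quoted from~\cite{LN98} without proof, so there is no internal argument to compare against.

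Your high-level framing (take $\IG=\UCT(G_0)$, exhibit infinitely many covers of $G_0$ for the first property, then force a bad eigenvalue in every $G\in\FinQuo(\IG)$) is sound, but the mechanism you propose for the second property has a gap I do not see how to patch. Pushforward along a covering $\pi\colon\widetilde G\to G$ intertwines the adjacency operators, $A_G\,\pi_*=\pi_*\,A_{\widetilde G}$; hence for $\phi=u\circ\sigma$ (with $u>0$ the Perron eigenvector of $G_0$) the pushforward $\pi_*\phi$ is a strictly positive eigenvector of $G$ at eigenvalue $\chi(\IG)$, i.e.\ exactly a multiple of $G$'s own Perron eigenvector, so its component orthogonal to that eigenvector is zero. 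The ``signed'' repair does not help: for a deck transformation $\lambda$ of $\pi$ one has $\pi_*(\phi-\lambda^*\phi)=\pi_*\phi-\pi_*\phi=0$, and more generally a signed combination $\sum_{\lambda}\epsilon_\lambda\,\lambda^*\phi$ over the deck group does not even descend to a well-defined function on $G$ unless $\epsilon$ is constant. So you never actually produce a nonzero test vector on $G$ with the Rayleigh quotient you need, and you explicitly defer the real construction back to~\cite{LN98} anyway. The Lubotzky--Nagnibeda argument avoids the variational/common-cover machinery altogether: they engineer $\IG$ so that there is a single finite graph $X_0\in\FinQuo(\IG)$ that is covered by \emph{every} $G\in\FinQuo(\IG)$. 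Since a covering $G\to X_0$ gives $\spec(X_0)\subseteq\spec(G)$ (eigenvectors of $X_0$ pull back along the covering map) and $\lambda_1(G)=\lambda_1(X_0)=\chi(\IG)$, one gets $\lambda_2(G)\ge\lambda_2(X_0)$ for all $G$, and they check by direct computation that $\lambda_2(X_0)>\specrad(\IG)$. Spectral inheritance under coverings does all the work once the existence of that universally-covered $X_0$ is arranged, which is where the substantive combinatorial construction lives.
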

Based on this result, Clark~\cite{Cla06} proposed the following definition and question (though he only discussed the case of $\IG$ being a tree):
\begin{definition}
    An infinite graph~$\IG$ is said to be \emph{Ramanujan} if $\FinQuo(\IG)$ contains infinitely many  $\IG$-Ramanujan graphs.  It is said to be \emph{weakly Ramanujan} if $\FinQuo(\IG)$ contains at least one $\IG$-Ramanujan graph.  (One can apply here the usual additional adjectives two-sided/bipartite.)
\end{definition}
\begin{question}    \label{ques:inf-x-if-weak}
    If $\IG$ is weakly Ramanujan, must it be Ramanujan?
\end{question}
Clark~\cite{Cla06} also made the following philosophical point.  The proof of the Lubotzky--Nagnibeda Theorem only explicitly establishes that there is an infinite tree~$\IG$ with $\FinQuo(\IG)$ infinite but no $G \in \FinQuo(\IG)$ having $\lambda_2(G) \leq \specrad(\IG)$.  But as long as we're excluding $\lambda_1(G)$, why not also exclude  $\lambda_2(G)$, or constantly many exceptional eigenvalues?  Clark suggested the following definition:
\begin{definition}
    An infinite graph~$\IG$ is said to be \emph{$k$-quasi-Ramanujan} ($k \geq 1$) if there are infinitely many $G \in \FinQuo(\IG)$ with $\lambda_{k+1}(G) \leq \specrad(\IG)$.
\end{definition}
Clark observed that a positive answer to the following question (weaker than \Cref{ques:inf-x-if-weak}) is consistent with Greenberg's \Cref{thm:greenberg}:
\begin{question}    \label{ques:inf-x-if-weak-quasi}
    If $\IG$ is weakly Ramanujan, must it be $k$-quasi-Ramanujan for some $k \in \N^+$?
\end{question}

Later, Clark~\cite{Cla07} directly conjectured the following related statement:
\begin{conjecture}  \label{conj:clark}
    (Clark.)  For every finite~$G$, there are infinitely many lifts $H$ of~$G$ such that every ``new'' eigenvalue $\lambda \in \spec(H) \setminus \spec(G)$ has $\lambda \leq \specrad(\UCT(G))$.
\end{conjecture}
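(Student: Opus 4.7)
The strategy I would pursue is the Marcus--Spielman--Srivastava interlacing-polynomials technique applied to random $2$-lifts of $G$, iterated to produce infinitely many covers. First I would invoke the Bilu--Linial correspondence: a $2$-lift $H_s$ of $G$ is parameterized by an edge signing $s : E(G) \to \{\pm 1\}$, and its spectrum splits as $\spec(H_s) = \spec(G) \sqcup \spec(A_s)$, where $A_s$ is the signed adjacency matrix of $G$. Hence $\spec(H_s) \setminus \spec(G) = \spec(A_s)$, and it suffices to exhibit, for each iteration, a signing $s^\ast$ with $\maxroot(\charpoly{A_{s^\ast}}{x}) \leq \specrad(\UCT(G))$.

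To find such an $s^\ast$, I would organize the $2^{|E(G)|}$ signings into a binary tree and apply the MSS interlacing-family lemma. The lemma's hypothesis requires that every uniform-weight convex combination of the subfamily polynomials $\{\charpoly{A_s}{x}\}_s$ be real-rooted with an appropriate common interlacer, which follows in the standard way by exhibiting $\charpoly{A_s}{x}$ as marginals of a real-stable multivariate parent polynomial. The lemma then guarantees some $s^\ast$ with
\[
    \maxroot\parens*{\charpoly{A_{s^\ast}}{x}} \;\leq\; \maxroot\parens*{\E_{s}\bracks*{\charpoly{A_s}{x}}}.
\]
A brief Godsil--Gutman expansion of $\det(xI - A_s)$ under uniform $\pm 1$ signings annihilates every term whose edge-multiset is not a matching of $G$, yielding the identity $\E_s[\charpoly{A_s}{x}] = \matchpoly{G}{x}$. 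Finally, Godsil's theorem bounds the maximum root of $\matchpoly{G}{x}$ by $\specrad(\UCT(G))$ via the path-tree construction (the path tree embeds into $\UCT(G)$ as a covering subtree, and $\matchpoly{G}{x}$ divides its characteristic polynomial), completing the existence of one good $2$-lift.

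To produce infinitely many lifts, I would iterate: apply the same argument to the $2$-lift $H$ just produced, noting that $\UCT(H) = \UCT(G)$ since $H$ covers $G$, so \Cref{fact:greenberg1} keeps the Godsil bound unchanged. A $2$-lift of $H$ is itself a $4$-lift of $G$ whose spectrum relative to $G$ consists of the previous new eigenvalues of $H$ (bounded by induction) together with the new eigenvalues introduced at this stage (bounded by the MSS step). Continuing, we obtain an infinite ascending tower of distinct lifts $G \leftarrow H_1 \leftarrow H_2 \leftarrow \cdots$ each satisfying the required bound.

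The main obstacle is the interlacing-family verification: exhibiting the real-stable parent polynomial and confirming that convex combinations of $\charpoly{A_s}{x}$ over subcubes of signings remain real-rooted. This is precisely what MSS carried out in their bipartite Ramanujan paper for $\Delta$-regular graphs, and the adaptation to arbitrary finite $G$ is routine since the signed-adjacency framework and Heilmann--Lieb real-rootedness of $\matchpoly{G}{x}$ transfer verbatim. A secondary housekeeping task is verifying that the Godsil bound indeed uses $\specrad(\UCT(G))$ rather than a looser quantity; this reduces to observing that the path tree of any finite graph is a tree cover of $G$ and hence has spectral radius at most $\specrad(\UCT(G))$ by \Cref{fact:greenberg1}.
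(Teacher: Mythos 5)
Your proposal is correct and is essentially a recapitulation of the Marcus--Spielman--Srivastava argument from~\cite{MSS15a}, which is precisely what the paper cites as the original proof of Clark's conjecture. The paper itself does not re-prove this statement; rather, its own contribution is the generalization in \Cref{sec:minors-root-bounds} and the final section, which works with $N$-lifts of arbitrary sum graphs via the standard representation of $\symm{N}$, the additive characteristic polynomial, and freelike-walk moment bounds. Specializing the paper's machinery to edge atoms recovers your route (matching polynomial, treelike walks, 2-lifts via the sign representation). The main structural difference: you obtain infinitely many lifts by iterating $2$-lifts, producing covers of $G$ of size $2^k|V(G)|$; the paper follows Hall--Puder--Sawin in directly producing an $N$-lift for every $N$, which is cleaner (no tower bookkeeping, no restriction on which lift sizes are achieved) and also yields the stronger \Cref{thm:HPS}. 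Your iteration is logically sound since $\UCT(H_i) = \UCT(G)$ at every stage, so the same Godsil root bound applies, but note it relies on the tower being loop-free, which holds if $G$ is.

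One small inaccuracy to fix: in your ``secondary housekeeping'' remark you claim the path tree is a cover of $G$ and invoke \Cref{fact:greenberg1}. The path tree is \emph{not} a cover of $G$ in general; it is a finite subtree of $\UCT(G)$ (every self-avoiding walk is non-backtracking). The correct justification for $\specrad(P(G)) \le \specrad(\UCT(G))$ is monotonicity of spectral radius under taking subgraphs (closed walk counts can only decrease), not the covering fact. Likewise, MSS's interlacing-family verification in~\cite{MSS15a} is already carried out for arbitrary finite graphs, not only $\Delta$-regular ones, so there is no ``adaptation'' needed --- the argument passes directly.
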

(The notion of an \emph{$N$-lift} of a graph will be reviewed in \Cref{sec:add-lift}.) This effectively generalizes an earlier well known conjecture by Bilu and Linial~\cite{BL06}:
\begin{conjecture}
    (Bilu--Linial.)   For every regular finite~$G$, there is a $2$-lift $H$ of~$G$ such that every ``new'' eigenvalue  has $\lambda \leq \specrad(\UCT(G))$.  As a consequence, one can obtain a tower of  infinitely many such lifts of~$G$.
\end{conjecture}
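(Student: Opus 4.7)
The plan is to follow the interlacing polynomials method of Marcus--Spielman--Srivastava. Recall that a $2$-lift of $G$ is determined by a signing $s : E(G) \to \{\pm 1\}$, and that the spectrum of the resulting lifted graph $H_s$ decomposes as $\spec(G) \sqcup \spec(A_s)$, where $A_s$ is the signed adjacency matrix obtained by negating $A_G$ along edges with $s(e)=-1$. Hence the ``new'' eigenvalues are exactly the roots of $p_s(x) := \charpoly{A_s}{x}$, and it suffices to exhibit a signing $s^\star$ with $\lambda_{\max}(A_{s^\star}) \le \specrad(\UCT(G)) = 2\sqrt{\Delta-1}$.

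First I would compute $\E_s[p_s(x)]$ under a uniformly random signing. Expanding $\det(xI - A_s)$ as a sum over permutations and using that $\E[s(e)^k] = 0$ for odd $k$, only those terms survive whose cycle structure is a disjoint union of transpositions, i.e.\ partial matchings of $G$. A short bookkeeping identifies the result as the matching polynomial $\matchpoly{G}{x}$. By the Heilmann--Lieb theorem, $\matchpoly{G}{x}$ is real-rooted and all its roots lie in $[-2\sqrt{\Delta-1},\,2\sqrt{\Delta-1}]$, which is precisely the target interval.

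Next I would show that $\{p_s\}_{s \in \{\pm 1\}^{E(G)}}$ forms an interlacing family indexed by the Boolean hypercube: for every prefix $(s_1,\dots,s_k)$ of signs, the conditional expectation $\E_{s_{k+1},\dots,s_m}[p_s(x)]$ is real-rooted, and the two siblings obtained by fixing $s_{k+1} = \pm 1$ share a common interlacer. The clean way to establish this is to prove real stability of an auxiliary multivariate determinantal polynomial with one variable $z_e$ per edge, then invoke closure of real stability under positive-coefficient specializations to deduce real-rootedness along every branch and exhibit the required common interlacers. The MSS interlacing family theorem then produces a signing $s^\star$ whose $p_{s^\star}(x)$ has largest root at most that of $\matchpoly{G}{x}$, hence at most $2\sqrt{\Delta-1}$. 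The tower follows immediately by iteration, since the constructed $H$ is itself $\Delta$-regular with universal cover $\UCT(G)$, so the same argument applied to $H$ yields an infinite nested sequence of lifts, each contributing only ``good'' new eigenvalues.

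I expect the main obstacle to be the real-stability / common-interlacer step: mere real-rootedness of the overall expected polynomial is insufficient, and one must establish real-rootedness of \emph{every} conditional expectation along the $2^{|E(G)|}$-leaved signing tree. This property fails for generic polynomial families and hinges on a dedicated multivariate stability argument tailored to signed adjacency matrices; everything else in the plan (the matching-polynomial computation, Heilmann--Lieb, and the iteration producing the tower) is essentially mechanical once this technical core is in hand.
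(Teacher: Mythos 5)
The paper does not itself prove the Bilu--Linial conjecture --- it is stated for context and credited to Marcus--Spielman--Srivastava~\cite{MSS15a} --- and your proposal is a faithful reconstruction of that argument: the expected characteristic polynomial of a uniformly random edge-signing equals the matching polynomial (Godsil--Gutman), Heilmann--Lieb places its roots in $[-2\sqrt{\Delta-1},2\sqrt{\Delta-1}]=[-\specrad(\UCT(G)),\specrad(\UCT(G))]$ for $\Delta$-regular $G$, and the signed characteristic polynomials form an interlacing family via a multivariate real-stability argument, after which the MSS selection theorem and iteration give the tower. Where this diverges from what the present paper actually proves (the additive-lift generalization, whose edge-atom, $N=2$ specialization recovers Bilu--Linial through \Cref{thm:lifts-interlacing-family}, \Cref{thm:root-bound}, and \Cref{thm:root-bound-interlace}): the paper derives the interlacing property from the Hall--Puder--Sawin rank-$1$ random variable framework rather than a bespoke stability argument for signed adjacency matrices, and it bounds the roots by counting closed freelike walks (which in the edge-atom case degenerate to Godsil's treelike walks) rather than by invoking Heilmann--Lieb or Godsil's path-tree divisibility. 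The walk-counting route is forced on the paper because path-tree divisibility does not extend to general additive products; for the classical $\Delta$-regular $2$-lift setting your Heilmann--Lieb shortcut is the more economical choice. The only caveat is the one you already flag: real-rootedness of \emph{every} conditional expectation along the signing tree (not just the top-level expectation) is the genuine technical content, and until that multivariate stability step is supplied the interlacing-family theorem has nothing to act on.
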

\noindent (Actually, in the above, Bilu--Linial and Clark had the stronger conjecture $|\lambda| \leq \specrad(\UCT(G))$.)  As pointed out in~\cite{Cla07}, one can easily check that the complete bipartite graph $K_{\Delta_1, \Delta_2}$ is $\ITree_{\Delta_1,\Delta_2}$-Ramanujan, and hence a positive resolution of \Cref{conj:clark} would show not only that $\ITree_{\Delta_1,\Delta_2}$ is quasi-Ramanujan, but that there are in fact infinitely many $\ITree_{\Delta_1,\Delta_2}$-Ramanujan graphs.

Finally, Mohar~\cite{Moh10} studied these questions in the case of \emph{multi-}partite trees.  Suppose one has a $t$-partite finite graph where every vertex in the $i$th part has exactly $j$ neighbors in the $j$th part.  Call $D = (d_{ij})_{ij}$ the degree matrix of such a graph.  There are very simple conditions under which a matrix $D = \N^{t \times t}$ is the degree-matrix of \emph{some} $t$-partite graph, and in this case, there is a unique infinite tree $\ITree_D$ with $D$ as its degree matrix.  As an example, the $(\Delta_1,\Delta_2$)-biregular infinite tree corresponds to
\begin{equation}    \label{eqn:bip-D}
    D = \begin{pmatrix} 0 & \Delta_1 \\ \Delta_2 & 0 \end{pmatrix}.
\end{equation}
Mohar conjectured that for all degree matrices~$D$, the answer to \Cref{ques:inf-x-if-weak-quasi} is positive for~$\ITree_D$.  In fact he made a slightly more refined conjecture.  Given~$D$, he defined $k_D = \max \{k : \lambda_k(D) \geq \specrad(\ITree_D)\}$.  (As an example, $k_D = 1$ for the $D$ in \Cref{eqn:bip-D}.)  He then conjectured:
\begin{conjecture}    \label{conj:mohar}
    (Mohar.) For a degree matrix~$D$:
    \begin{enumerate}
        \item \label{item:mohar1} \cite[Conj.~4.2]{Moh10} If $\ITree_D$ is weakly Ramanujan, then it is $k_D$-quasi-Ramanujan.
        \item \label{item:mohar2} \cite[Conj.~4.4]{Moh10} If $k_D = 1$, then $\ITree_D$ is Ramanujan.
    \end{enumerate}
\end{conjecture}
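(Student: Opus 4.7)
The plan is to prove Mohar's conjecture by combining the \additiveproduct framework developed in this paper with the interlacing polynomials method of Marcus--Spielman--Srivastava. The key starting observation is that for a (suitable) degree matrix $D$, one should be able to realize $\ITree_D$ as the additive product $A_1 \addprod \cdots \addprod A_c$ of a collection of small atom graphs on a common vertex set whose combinatorial structure mirrors the $t$-partite block structure encoded by $D$.

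Concretely, assume $\ITree_D$ is weakly Ramanujan and let $G_0 \in \FinQuo(\ITree_D)$ be a finite base graph with $\lambda_2(G_0) \leq \specrad(\ITree_D)$. I would decompose $G_0$ into atoms $A_1, \dots, A_c$ on $V(G_0)$, where each atom collects the edges corresponding to one $(i,j)$-block of $D$, chosen so that $A_1 \addprod \cdots \addprod A_c$ has the same universal covering tree as $G_0$ (namely $\ITree_D$) and, by \Cref{fact:greenberg1}, the same spectral radius. Then take a random $\nlift$-lift $\widetilde{G}$ of $G_0$ obtained by independently permuting the edges within each atom (an ``additive lift''). The spectrum of $\widetilde{G}$ splits into the ``old'' eigenvalues of $G_0$ plus $\nlift - 1$ families of ``new'' eigenvalues coming from nontrivial isotypic components of the lift.

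The technical heart of the argument would be showing that the \emph{expected} characteristic polynomial of the new eigenvalues equals the \factorpolynomial\ $\factorpoly{A_1, \dots, A_c}{x}$, which by the paper's main theorem is real-rooted with every root of magnitude at most $\specrad(A_1 \addprod \cdots \addprod A_c) = \specrad(\ITree_D)$. Invoking the interlacing polynomials method — establishing that the additive lifts form an interlacing family whose mean polynomial is the \factorpolynomial --- yields a specific lift $\widetilde{G}$ whose largest new eigenvalue is at most $\specrad(\ITree_D)$. By the definition of $k_D$, at most $k_D$ eigenvalues of $G_0$ exceed $\specrad(\ITree_D)$, so $\lambda_{k_D+1}(\widetilde{G}) \leq \specrad(\ITree_D)$, producing a $k_D$-quasi-$\ITree_D$-Ramanujan graph. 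Iterating the construction yields infinitely many such graphs, proving part~\ref{item:mohar1}; part~\ref{item:mohar2} is the specialization $k_D = 1$.

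The main obstacle will be the expected-polynomial identity: verifying that the expected characteristic polynomial of the new eigenvalues under atom-respecting random permutations coincides with the freelike-walk-based definition of $\factorpoly{A_1, \dots, A_c}{x}$. This generalizes the matching-polynomial identity at the core of MSS's resolution of the Bilu--Linial conjecture, where the atoms are single edges and the \factorpolynomial\ specializes to the matching polynomial. The generalization requires tracking how closed walks in $\widetilde{G}$ project onto freelike walks on the atoms, and the paper's notion of freelike walks is designed precisely to make this bookkeeping go through; I expect the main combinatorial identity to follow from an inclusion--exclusion over atom-level backtracking. A secondary obstacle is guaranteeing that $\ITree_D$ can actually be written as an additive product — this should be verifiable for the degree matrices covered by the conjecture by building atoms that reflect the inter-part edge multiplicities, but identifying the exact class of $D$ for which the construction works may require some case analysis.
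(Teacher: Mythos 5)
This statement is a conjecture attributed to Mohar, not a theorem of the paper; the paper states it and only remarks in passing (at the start of the Interlacing Polynomials subsection) that part~(1) ``can be shown'' to follow from the Marcus--Spielman--Srivastava work, without writing out any argument. Your proposal is essentially a correct reconstruction of that implied argument, but it is more elaborate than it needs to be. Since $\ITree_D$ is a tree, you do not need the additive-product machinery at all: by Fact~\ref{fact:uct} the additive product of the edge atoms of any connected finite $G_0$ is precisely $\UCT(G_0)$, and by Remark~\ref{rem:additive-general-lift} additive lifts of single-edge atoms are just ordinary lifts. So the entire construction collapses to a direct application of the Hall--Puder--Sawin Theorem~\ref{thm:HPS}, which the paper already states verbatim. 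Your ``secondary obstacle'' --- whether $\ITree_D$ can be realized as an additive product --- is moot: weak Ramanujancy already furnishes a connected, loopless $G_0 \in \FinQuo(\ITree_D)$, and $\ITree_D = \UCT(G_0)$ is then automatic.

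One step is misjustified. You write ``By the definition of $k_D$, at most $k_D$ eigenvalues of $G_0$ exceed $\specrad(\ITree_D)$,'' but $k_D$ is defined via the eigenvalues of the small degree matrix $D$, not those of $G_0$, so nothing follows about $G_0$ ``by the definition of $k_D$.'' What actually bounds $G_0$'s spectrum is the weak-Ramanujan hypothesis itself: $\lambda_2(G_0) \le \specrad(\ITree_D)$, i.e., at most \emph{one} eigenvalue of $G_0$ exceeds $\specrad(\ITree_D)$. The new eigenvalues of the HPS lift are also at most $\specrad(\ITree_D)$, so the lift again satisfies $\lambda_2 \le \specrad(\ITree_D)$, and iterating yields infinitely many $\ITree_D$-Ramanujan graphs. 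In other words, run correctly, your argument proves the strictly stronger statement ``weakly Ramanujan $\Rightarrow$ Ramanujan'' for every infinite tree $\ITree_D$, which subsumes both parts of Mohar's conjecture (since $k_D \ge 1$ always). Your write-up stops at $k_D$-quasi-Ramanujan and so undersells what the method actually delivers.
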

Mohar mentions that \Cref{item:mohar1} might be true simply as the statement ``$\ITree_D$ is $k_D$-quasi-Ramanujan for every~$D$'', but was unwilling to explicitly conjecture this.

\subsection{Interlacing polynomials}
Several of the conjectures mentioned in the previous section were proven by Marcus, Spielman, and Srivastava using the method of ``interlacing polynomials''~\cite{MSS15a,MSS15b,MSS17,MSS15d}.  Particularly, in \cite{MSS15a} they proved Clark's \Cref{conj:clark} and the Bilu--Linial Conjecture (for one-sided/bipartite Ramanujancy). One can also show their work implies \Cref{item:mohar1} of Mohar's \Cref{conj:mohar}.

Their first work~\cite{MSS15a} in particular shows that for all degree~$\Delta$, if one starts with the complete graph $K_{\Delta+1}$ or the complete bipartite graph $K_{\Delta,\Delta}$, one can perform a sequence of $2$-lifts, obtaining larger and larger finite graphs $G_1, G_2, \dots$ each of which has $\lambda_2(G_i) \leq 2\sqrt{\Delta-1}$; i.e., each $G_i$ is one-sided/bipartite $\ITree_{\Delta}$-Ramanujan.

In a subsequent work~\cite{MSS15d}, they showed the existence of $\Delta$-regular bipartite Ramanujan on $n + n$ vertices for each $\Delta$ and each~$n$, as well as $\Delta$-regular one-sided Ramanujan graphs on $2n$ vertices for each~$n$.  The bipartite graphs in this case are $n$-lifts of the (non-simple) $2$-vertex graph with $\Delta$ edges, which one might denote by $\Delta K_2$.  However, Marcus--Spielman--Srivastava don't really analyze them as lifts of $\Delta K_2$ per se.  Rather, they analyze them as sums of $\Delta$ random (bipartition-respecting) permutations of $M_n$, where $M_n$ denotes the perfect matching on $n+n$ vertices.  In general,~\cite{MSS15d} can be thought of as analyzing the eigenvalues of sums of random permutations of a few large graphs (each of which itself might be the union of many disjoint copies of a fixed small graph).  As such, as we will discuss in \Cref{sec:FFC-appendix}, its techniques can be used to construct infinitely many $\IG$-Ramanujan graphs for vertex-transitive free product graphs~$\IG$.

Finally, in a followup work, Hall, Puder, and Sawin~\cite{HPS18} reinterpret~\cite{MSS15d} from the random lift perspective, and generalize it to work for $N$-lifts of any base graph.  Specifically, they prove Clark's \Cref{conj:clark} in a strong way:
\begin{theorem}                                     \label{thm:HPS}
    (Hall--Puder--Sawin.) Let $G$ be a connected finite graph without loops (but possibly with parallel edges).  Then for every $N \in \N^+$, there is an $N$-lift $H$ of $G$ such that every new eigenvalue $\lambda \in \spec(H) \setminus \spec(G)$ has $\lambda \leq \specrad(\UCT(G))$.
\end{theorem}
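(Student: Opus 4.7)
The plan is to follow the interlacing polynomials paradigm of Marcus--Spielman--Srivastava, adapted to general $N$-lifts. First, recall that an $N$-lift of $G$ is determined by assigning, for each edge $e = \{u,v\}$ of $G$, a permutation $\sigma_e \in \symm{N}$; the lifted adjacency matrix has the block form $A(H) = \sum_e \left( E_{uv} \otimes P_{\sigma_e} + E_{vu} \otimes P_{\sigma_e}^{\top} \right)$, where $P_\sigma$ denotes the permutation matrix of $\sigma$. Using the $\symm{N}$-decomposition $\C^N \cong \mathtt{triv} \oplus \stdrep$, the characteristic polynomial of $A(H)$ factors as $\charpoly{G}{x} \cdot q_{\vec\sigma}(x)$, where $q_{\vec\sigma}$ is the characteristic polynomial of the restriction of $A(H)$ to the $\stdrep$-isotypic component; these are precisely the ``new'' eigenvalues that need to be bounded.

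Next I would compute the expected polynomial $\bar q(x) = \E_{\vec\sigma}\bracks*{q_{\vec\sigma}(x)}$, taken over independent uniform $\sigma_e \in \symm{N}$. The action of a uniform permutation on $\stdrep$ has controlled low moments, so $\bar q(x)$ should admit a combinatorial expansion indexed by closed non-backtracking configurations in $G$ that lift to closed walks in $\UCT(G)$. My expectation, guided by \cite{MSS15d} (where the expected characteristic polynomial of a sum of random permutations of a matching is exactly the matching polynomial), is that $\bar q$ coincides with a natural generalization of $\matchpoly{G}{x}$ attached to the full graph $G$; in the language developed later in the paper, it should be the \factorpolynomial $\factorpolyplain$ applied to the single-edge \atoms of $G$.

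The next step is to show that $\bar q$ is real-rooted and that $\maxroot(\bar q) \leq \specrad(\UCT(G))$. For this I would seek a Godsil-style \emph{treelike walks} interpretation: identify $\bar q$ with the characteristic polynomial of a finite operator whose closed walks inject into closed walks in $\UCT(G)$, so that the trace-power bound $\mathrm{tr}(\bar q\text{-operator}^{t}) \leq (\text{vertex count}) \cdot \specrad(\UCT(G))^{t}$ forces the desired root bound via \Cref{fact:specrad-basic}. Real-rootedness should then follow either from a Heilmann--Lieb-type induction on edges, or from exhibiting $\bar q$ as the characteristic polynomial of a symmetric matrix. I expect this to be the main technical obstacle: transferring from the trace bound for a single matrix to the required combinatorial identity, and proving real-rootedness in a form robust enough to survive the conditioning step below.

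Finally, I would complete the argument via the interlacing family framework. The family $\{q_{\vec\sigma}\}$ must be organized as an interlacing family, for instance by revealing the values $\sigma_e(1), \sigma_e(2), \ldots$ one at a time (equivalently, decomposing each $\sigma_e$ as an ordered product of adjacent transpositions), and checking that at every branching the partial conditional expectations share a common interlacer. Given such a family, the MSS selection lemma produces $\vec\sigma^{\star}$ with $\maxroot(q_{\vec\sigma^\star}) \leq \maxroot(\bar q) \leq \specrad(\UCT(G))$, yielding the desired lift. The delicate point is that each conditional expectation must itself admit the real-rootedness/root-bound argument of the previous paragraph; this is typically handled by showing the relevant class of polynomials is closed under the operation ``fix one value of one permutation and average over the rest,'' so that the argument produces a stable family rather than just a single polynomial.
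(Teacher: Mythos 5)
Note first that Theorem~\ref{thm:HPS} is cited from Hall--Puder--Sawin; the paper does not re-prove it directly but subsumes it: specializing the final theorem (Section on $X$-Ramanujan lifts) to the edge atoms $A_1,\dots,A_\atms$ of $G$ recovers it, since the additive product of edge atoms is $\UCT(G)$ (Fact~\ref{fact:uct}) and additive lifts then recover all $N$-lifts (Remark~\ref{rem:additive-general-lift}). Your overall plan — split off new eigenvalues via $\stdrep$, take expectations over the lift, apply a trace-method bound, then run the interlacing-family machinery with transpositions revealed one at a time — is the right template and matches both \cite{HPS18} and the paper.

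The step that fails is your identification of $\bar q$. You predict that $\bar q = \E_{\vec\sigma}[q_{\vec\sigma}]$ equals the additive characteristic polynomial applied to the single-edge atoms of $G$, i.e., $\matchpoly{G}{x}$. But $q_{\vec\sigma}$ is the characteristic polynomial of an $n(N-1)\times n(N-1)$ matrix (the $\stdrep$-isotypic block), so it has degree $n(N-1)$, whereas $\matchpoly{G}{x}$ has degree $n$; for $N>2$ these cannot coincide. The matching polynomial is the expected polynomial \emph{only for $N=2$} (Theorem~\ref{thm:factor-poly-comes-from-random-edge-signing} with edge atoms, via Remark~\ref{rem:balanced-is-general-for-forests}). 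For general $N$, the paper does not compute $\bar q$ directly. Instead, it proves (Theorem~\ref{thm:factorpolyinvariance}) that the first compound moments of the $\pi$-lifted matrix depend on $(\Gamma,\pi)$ only through Property~$(\calP 1)$ (Definition~\ref{def:Property-P1}). Since both $(\symm{N+1},\stdrep)$ and $(\Hyperoctahedrals{N},\idrep)$ satisfy $(\calP 1)$, the expected polynomial with the standard representation equals the expected polynomial under random balanced edge-signings, which by Theorem~\ref{thm:factor-poly-comes-from-random-edge-signing} is the expected additive characteristic polynomial of the random $N$-lifted graph written as a sum of its $cN$ natural atoms (Corollary~\ref{cor:expected-factor-poly-lift}). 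The root bound then follows because every connected component of the lifted graph has additive product isomorphic to $\UCT(G)$ (Remark~\ref{rem:connected-comp-lift-is-quotient}), so the freelike-walk trace bound (Theorem~\ref{thm:root-bound}, Remark~\ref{rem:root-bound-factor}) applies to it. Your sketch contains the treelike-walk bound for the $N=2$ case, but the transfer to general $N$ — the compound-matrix / representation-theoretic reduction — is absent, and it is the technical heart of the argument.
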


\subsection{Our work and technical overview}

\begin{theorem}     \label{thm:additive-quasi}
Suppose $X$ is an additive product graph such that $\chi(X)>\specrad(X)$. Then it is $k$-quasi-Ramanujan for some $k\in\N^+$.
\end{theorem}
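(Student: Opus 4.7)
The plan is to follow the interlacing-polynomials paradigm of Marcus--Spielman--Srivastava, adapted in the spirit of Hall--Puder--Sawin's \Cref{thm:HPS} to the additive-product setting, using the additive characteristic polynomial $\factorpoly{A_1, \dots, A_\atms}{x}$ as the ``expected'' polynomial that governs the new eigenvalues. Concretely, I would define an \emph{additive $N$-lift} of the atoms $A_1, \dots, A_\atms$ as follows: write all atoms on a common vertex set $V$ of size $\vtcs$, lift each $A_j$ independently by picking a uniformly random permutation labeling of its edges to obtain an $N$-lift $\widetilde{A_j}$ on vertex set $V \times [N]$, and set $G_{\vec{\sigma}} = \widetilde{A_1} + \cdots + \widetilde{A_\atms}$. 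A direct check using \Cref{def:cover} confirms $G_{\vec{\sigma}} \in \FinQuo(X)$ for every $\vec{\sigma}$, so each additive lift is a candidate $X$-Ramanujan graph. Its spectrum splits into ``old'' eigenvalues inherited from the base $G_0 = A_1 + \cdots + A_\atms$ (always present, independent of $\vec{\sigma}$) and ``new'' eigenvalues that depend on the random permutations.

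Next, I would show that the family of new-eigenvalue characteristic polynomials $\{p_{\vec{\sigma}}\}$ forms an \emph{interlacing family} in the sense of \cite{MSS15a}, and that its expectation satisfies
\[
    \Ex_{\vec{\sigma}}\; p_{\vec{\sigma}}(x) \;=\; \factorpoly{A_1, \dots, A_\atms}{x}.
\]
Interlacing would be verified via the by-now-standard route of partitioning the permutation randomness into two-element swaps and checking common interlacers along the resulting binary tree, as in \cite{HPS18}. The expectation identity is the technical heart of the argument, and is precisely what motivates the definition of $\factorpolyplain$: the independent uniform permutations kill every closed walk that fails to be ``freelike'' across the atom decomposition, leaving exactly the combinatorial quantity encoded by $\factorpolyplain$. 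Combining this identity with the paper's earlier result that $\factorpolyplain$ is real-rooted with all roots bounded in magnitude by $\specrad(X)$, the interlacing-family lemma produces a specific $\vec{\sigma}^*$ with $\maxroot(p_{\vec{\sigma}^*}) \leq \specrad(X)$.

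Finally, set $k$ equal to the number of eigenvalues of $G_0$ that exceed $\specrad(X)$; by the hypothesis $\chi(X) > \specrad(X)$ we have $k \geq 1$, and $k$ depends only on $X$. For the $\vec{\sigma}^*$ just produced, $G_{\vec{\sigma}^*}$ has exactly $k$ eigenvalues strictly above $\specrad(X)$ (the old ones), and the rest lie at most at $\specrad(X)$, so $\lambda_{k+1}(G_{\vec{\sigma}^*}) \leq \specrad(X)$. To get infinitely many such graphs I would iterate: viewing the lifted atoms $\widetilde{A_j}$ as new atoms on the lifted vertex set (whose additive product is still covered by $X$), apply the construction again with a fresh $N$. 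At every stage the previously added ``new'' eigenvalues already lie at or below $\specrad(X)$ and so do not enlarge the bad-eigenvalue count; the count therefore remains $k$ all the way up the tower, yielding infinitely many $k$-quasi-$X$-Ramanujan graphs.

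The main obstacle is the second step: establishing the interlacing property of the random additive-lift polynomials and, especially, identifying the expected characteristic polynomial with $\factorpoly{A_1,\dots,A_\atms}{x}$. Whereas \cite{HPS18} handles lifts of a single base graph, the additive setting requires independent permutations across several atoms, so one must track how closed walks that alternate between atoms survive the permutation averaging. Showing that only ``freelike'' closed walks contribute in expectation, and that their contribution coincides exactly with the coefficients of $\factorpolyplain$, is the nontrivial combinatorial content. Once this is in place, the remainder of the argument plugs directly into the interlacing-polynomials machinery and the root bound already proved via freelike walks.
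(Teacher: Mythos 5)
Your outline shares several ingredients with the paper (the interlacing-family framework, the freelike-walk root bound, and the lift-is-a-quotient step), but there is a genuine gap at what you correctly identify as the technical heart: the claimed identity
\[
\E_{\vec{\sigma}}\; p_{\vec{\sigma}}(x) \;=\; \factorpoly{A_1, \dots, A_\atms}{x}
\]
cannot be true for $N > 2$, for a basic degree-count reason. The new-eigenvalue polynomial $p_{\vec{\sigma}}$ has degree $\vtcs(N-1)$, while $\factorpoly{A_1, \dots, A_\atms}{x}$ has degree~$\vtcs$; the two agree only at $N=2$, where $\stdrep = \sgnrep$ and a random balanced $2$-lift is a random balanced edge-signing (\Cref{thm:factor-poly-comes-from-random-edge-signing}). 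For general $N$, the paper's actual statement (\Cref{cor:expected-factor-poly-lift}) is that $\E[\charpoly{\LiftAdj{\calQ}{\stdrep}(H)}{x}]$ equals the \emph{expected} additive characteristic polynomial $\E[\factorpoly{\bA_{1,1},\dots,\bA_{\atms,\nlift}}{x}]$ of the \emph{lifted} atoms, a degree-$\vtcs N$ object. Establishing this is the whole point of \Cref{sec:minors-root-bounds}: one needs the representation-theoretic invariance theorem (\Cref{thm:factorpolyinvariance}), proved via exterior-power representations, Property $(\calP1)$, and the Grand Orthogonality Theorem, to show that the expected characteristic polynomial is the same for $(\symm{\nlift+1}, \stdrep)$ as for $(\Hyperoctahedrals{\nlift}, \idrep)$, and it is only the latter pair that connects back to random balanced edge-signings of the lifted sum graph. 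Your proposal skips this reduction entirely, which is precisely the step that distinguishes this setting from the $\matchpoly{G}{x}$ case.

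Two further points. First, you invoke $\chi(X) > \specrad(X)$ only to argue $k \ge 1$, but in the paper it plays a more essential role: it forces the chosen lift to be connected (a disconnected lift would need a ``new'' eigenvalue equal to $\chi(X) > \specrad(X)$), and connectedness is what \Cref{prop:lift-is-quotient} requires. Without it, your claim that $G_{\vec\sigma} \in \FinQuo(X)$ ``for every $\vec\sigma$'' is unjustified. Second, your ``additive $N$-lift'' uses uniformly random permutation edge-labelings; the paper's \Cref{def:additive-lift} uses \emph{balanced} lifts induced by vertex potentials, and both the interlacing result (\Cref{thm:lifts-interlacing-family}) and the freelike-walk root bound are proved in the balanced setting — these coincide for tree atoms but differ for atoms with cycles, so even your $N=2$ base case would fail for cyclic atoms. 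Your tower-of-lifts route to infinitude is a legitimate alternative to the paper's one-$N$-lift-for-every-$N$ construction, but it additionally requires knowing that the additive product of the lifted atoms is \emph{isomorphic} to $X$ (this is \Cref{rem:connected-comp-lift-is-quotient}), not merely ``covered by $X$'' as you write — being covered by $X$ would give a spectral radius $\ge \specrad(X)$, which is the wrong direction for the root bound.
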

\noindent (Recall here that $\chi(\IG)$ is  the common spectral radius of all $G \in \FinQuo(\IG)$.)

In order to show that all additive product graphs $X$ are $k$-quasi-Ramanujan for some $k$, we show the existence of an infinite family of finite quotients of $X$, which each have at most $k$ eigenvalues that exceed $\specrad(X)$. To this end, we start with a base graph $H$ that is a quotient of $X$ and show that for each $n\in\N$ there is an $n$-lift $H_n$ of $H$ that is (i) a quotient of $X$, and (ii) at most $|V(H)|$ eigenvalues of $H_n$ exceed $\specrad(X)$.  To show the existence of an appropriate lift, we pick a uniformly random lift from a restricted class of lifts called \emph{additive lifts} that satisfy (i), and show that such a random lift satisfies (ii) with positive probability.

We achieve this with the method of interlacing polynomials, which one should think of as the ``polynomial probabilistic method'', introduced in \cite{MSS15a,MSS15b}. Let's start with a very simple true statement `for any random variable $\bX$, there is a positive probability that $\bX \le \E[\bX']$. For a polynomial valued random variable $\bp$, we could attempt to make the statement `there is positive probability that $\maxroot(\bp)\le\maxroot(\E\bp)$' where $\E[\bp]$ is the polynomial obtained from coefficient-wise expectations of $\bp$. While this statement is not true in general, it is when $\bp$ is drawn from a well structured family of polynomials known as an `interlacing family', described in \Cref{sec:interlacing-families}. We take $\bp$ to be the polynomial whose roots are the new eigenvalues introduced by the random lift, and a key fact we use to get a handle on this polynomial is that it can be seen as the characteristic polynomial of the matrix one obtains by replacing every edge in $\Adj(H)$ with the standard representation of the permutation labeling it. This helps us prove that $\bp$ is indeed drawn from an interlacing family in \Cref{sec:additive-lift-interlacing} and establish bounds on the roots of $\E[\bp]$. We begin by studying the case $n = 2$, where $\bp$ is always the characteristic polynomial of a signing of $\Adj(H)$ and $\E[\bp]$, which we call the \factorpolynomial, generalizes the well-known matching polynomial and equals it whenever $X$ is a tree. We show that the roots of the \factorpolynomial lie in $[-\specrad(X),\specrad(X)]$ by proving a generalization of Godsil's result that the root moments of the matching polynomial of a graph count the number of treelike walks in the graph.  In particular, we prove that the $t$-th root moment of the \factorpolynomial counts the number of length-$t$ `freelike walks' in $H$, which are defined in Section \Cref{sec:freelike} and obtain the required root bound by upper bounding the number of freelike walks.  Our final ingredient is showing that the same root bounds apply to $\E[\bp]$ for general $n$, which we show follows from the case when $n=2$ in Section \Cref{sec:minors-root-bounds} using representation theoretic machinery.

The main avenue where our techniques differ from those in previous work (i.e. \cite{MSS15a} and \cite{HPS18}) are in how the root bounds are proven on the expected characteristic polynomial.  The proof that the root moments of the matching polynomial count closed treelike walks from \cite{God81} exploits combinatorial structure of the matching polynomial that is not shared by the additive characteristic polynomial.  Instead, we prove the analogous statement to Godsil's result that we need by relating certain combinatorial objects resembling matchings with a particular kind of walk using Newton's identities and Viennot's theory of heaps.

\section{Additive Products and Additive Lifts} \label{sec:lifts}

\subsection{Elementary definitions}
\begin{definition}
    Let $A$ be an $n \times n$ matrix with entries in a commutative ring.  We identify~$A$ with a directed, weighted graph on vertex set~$[n]$ (with self-loops allowed, but no parallel arcs); arc $(i,j)$ is present if and only if $A[i,j] \neq 0$.  In the \emph{general case}, the entries $A[i,j]$ are taken to be distinct formal variables. In the \emph{unweighted} case, each entry $A[i,j]$ is either~$0$ or~$1$; in this case, $A$~is the \emph{adjacency matrix} of the underlying directed graph.  The \emph{plain} case is defined to be when $A$~is unweighted, symmetric, and with diagonal entries~$0$; in this case, $A$ is the adjacency matrix of a simple undirected graph (with each undirected edge considered to be two opposing directed arcs).
\end{definition}

\begin{definition}
    Throughout this work we will consider finite sequences $A_1, \dots, A_\atms$ of matrices over the same set of vertices~$[n]$; we call each $A_j$ an \emph{\atom}, and the index~$j$ its \emph{color}.  We use the terms \emph{general} / \emph{unweighted} / \emph{plain} whenever all $A_j$'s have the associated property; we will also use the term \emph{monochromatic} when $\atms = 1$. When the $A_j$'s are thought of as graphs, we call $G = A_1 + \cdots + A_\atms$ the associated \emph{sum graph}; note that even if all the $A_j$'s are unweighted, $G$ may not be (it may have parallel edges). In the plain case, we use the notation $uCv$ for $u,v\in V$ and $C\in [c]$ to denote an edge $\{u,v\}$ that occurs in $A_C$.
\end{definition}
\begin{definition}      \label{def:edge-atoms-plain}
    A common sum graph case will be when~$G$ is a simple undirected graph with~$\atms$ (undirected) edges, and $A_1, \dots, A_\atms$ are the associated single-edge graphs on $G$'s vertex set~$[n]$; we call $A_1, \dots, A_\atms$ the \emph{edge \atoms} for~$G$. Note that this is an instance of the \emph{plain} case.
\end{definition}

\subsection{The additive product}

In this section we introduce the definition of the \emph{\additiveproduct} of \atoms.  This is a ``quasi-transitive'' infinite graph, meaning one whose automorphism group has only finitely many orbits.  For simplicity, we work in the plain, connected case.  

\begin{definition}      \label{def:additive-product}
    Let $A_1, \dots, A_\atms$ be plain \atoms on common vertex set~$[n]$.  Assume that the sum graph $G = A_1 + \cdots + A_\atms$ is connected; letting $\ul{A}_j$ denote $A_j$ with isolated vertices removed, we also assume that each $\ul{A}_j$ is nonempty and connected.  We now define the (typically infinite) \emph{\additiveproduct} graph $A_1 \addprod \cdots \addprod A_\atms \coloneqq (V, E)$ where $V$ and $E$ are constructed as follows.

    Let $v_1$ be a fixed vertex in $[n]$; let $V$ be the set of strings of the form $v_1 C_1 v_2 C_2 \cdots v_k C_k v_{k+1}$ for $k\ge 0$ such that:
    \begin{enumerate}[label=\emph{(\roman*)}]
    \item each $v_i$ is in $[n]$ and each $C_i$ is in $[c]$,
    \item $C_i \ne C_{i+1}$ for all $i < k$,
    \item $v_i$ and $v_{i+1}$ are both in $\ul{A}_{C_i}$ for all $i \le k$;
    \end{enumerate}
    and, let $E$ be the set of edges on vertex set $V$ such that for each string $s\in V$,
    \begin{enumerate}[label=\emph{(\roman*)}]
    \item we let $\{sCu,sCv\}$ be in $E$ if $\{u,v\}$ is an edge in $\ul{A}_{C}$,
    \item we let $\{sCu,sCuC'v\}$ be in $E$ if $\{u,v\}$ is an edge in $\ul{A}_{C'}$, and
    \item we let $\{v_1,v_1Cv\}$ be in $E$ if $\{v_1,v\}$ is an edge in $\ul{A}_{C}$.
    \end{enumerate}
\end{definition}
\noindent Two examples are given in \Cref{fig:add-prod-eg}.
\begin{figure}[H]
  \centering
  \vspace{-.25in}
  \includegraphics[width=.9\textwidth]{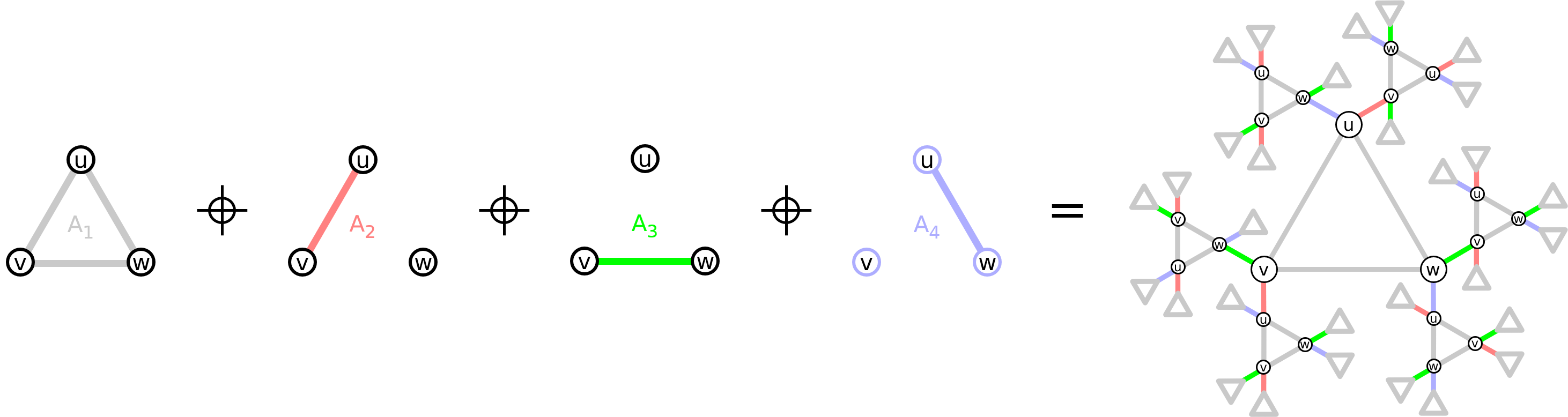} \\
  \vspace{.25in}
  \includegraphics[width=.6\textwidth]{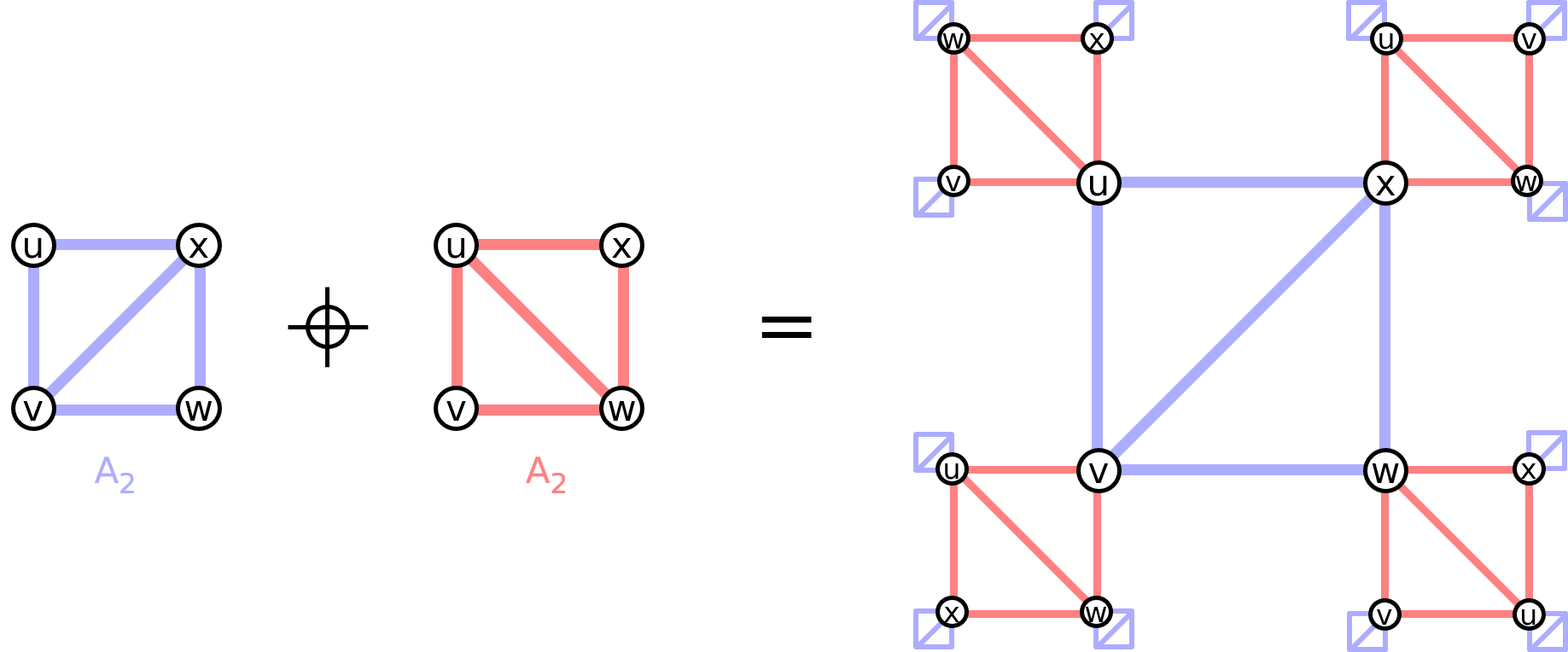}

  \caption{Two examples of the additive product; of course, only a part of each infinite graph can be shown.}
  \label{fig:add-prod-eg}
\end{figure}
\begin{proposition}
    $A_1 \addprod \cdots \addprod A_\atms$ is well defined up to graph isomorphism, independent of choice of~$v_1$.
\end{proposition}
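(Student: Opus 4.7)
The plan is to show that for any two choices of base vertex $v_1, v_1' \in [n]$, the resulting additive product graphs, which we denote $X(v_1)$ and $X(v_1')$, are isomorphic. We proceed in two steps: a \emph{reduction} using the connectivity of the sum graph, and a \emph{construction} of an explicit isomorphism in the reduced case.

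For the reduction, connectivity of $G = A_1 + \cdots + A_\atms$ yields a sequence $v_1 = u_0, u_1, \ldots, u_\ell = v_1'$ in $[n]$ such that each consecutive pair $(u_{i-1}, u_i)$ lies in a common atom $\ul{A}_{D_i}$ (since every edge of $G$ comes from some atom). Composing isomorphisms $X(u_{i-1}) \cong X(u_i)$, it suffices to prove $X(v_1) \cong X(v_1')$ when $v_1$ and $v_1'$ both lie in a single atom $\ul{A}_C$.

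In this reduced case, the plan is to construct $\phi: X(v_1) \to X(v_1')$ that maps the base $v_1$ to the base $v_1'$ and propagates this correspondence outward through the shared atom $\ul{A}_C$. The key idea is that the root atom copy of color $C$ attached to $v_1$ in $X(v_1)$ plays the same structural role as the root atom copy of color $C$ attached to $v_1'$ in $X(v_1')$, with the roles of $v_1$ and $v_1'$ interchanged within these atom copies. Propagating this correspondence recursively through the tree-like structure of atom copies gives the desired $\phi$: concretely, a string $s = v_1 C_1 v_2 \cdots C_k v_{k+1}$ in $X(v_1)$ is translated to a string in $X(v_1')$ by appropriately inserting or removing the segment $v_1' C v_1$ at the front, depending on whether $C_1 = C$.

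The main obstacle lies in the asymmetric role of the base vertex in Definition~\ref{def:additive-product}: rule (iii) edges at $v_1$ have no direct analog at non-base vertices. Verifying that $\phi$ is bijective and edge-preserving therefore requires careful case analysis, with the key computation being that under $\phi$, the rule (iii) edges at $v_1$ in $X(v_1)$ correspond to rule (i) edges within the $\ul{A}_C$-atom copy attached to $v_1'$ in $X(v_1')$, and vice versa. An alternative, more abstract approach is to recognize $X(v_1)$ as the universal cover of a ``graph of atoms'' structure that does not depend on $v_1$, from which uniqueness follows by general principles of covering space theory.
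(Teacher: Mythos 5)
Your proposal takes essentially the same core approach as the paper: construct an isomorphism by prepending a path from $v_1'$ to $v_1$ to each string, with cancellation/merging when colors clash at the seam. The paper does this directly for an arbitrary path $P$ via a recursive cancellation function $f$, defining $\Phi(s) = f(P,s)$; you instead reduce to the one-step case (both base vertices in a common atom $\ul{A}_C$) and then compose. That reduction is a valid and arguably cleaner presentation, since it isolates the seam to a single potential collision of colors.

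There is, however, an internal inconsistency you should fix. You first announce that $\phi$ ``maps the base $v_1$ to the base $v_1'$,'' but your concrete description (``inserting the segment $v_1' C v_1$ at the front'') sends $v_1 \mapsto v_1' C v_1$, not $v_1 \mapsto v_1'$. The concrete version is the correct one — and it matches the paper's map, which sends the root $u$ to the string $P$. The version that maps base to base cannot be edge-preserving: if $\phi(v_1) = v_1'$ and $\phi(v_1 C v) = v_1' C v$, then $\{v_1, v_1 C v\}$ is an edge iff $\{v_1,v\}\in\ul{A}_C$, while $\{v_1', v_1' C v\}$ is an edge iff $\{v_1',v\}\in\ul{A}_C$, and these generally differ. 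The correct map is best understood as the one that respects the covering maps $X(v_1)\to G$ and $X(v_1')\to G$, and $v_1' C v_1$ covers $v_1$, not $v_1'$.

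Your observation that rule-(iii) edges at $v_1$ in $X(v_1)$ become rule-(i) edges in $X(v_1')$ is correct only for the color $C$; rule-(iii) edges $\{v_1, v_1 C' v\}$ with $C'\neq C$ map (after prepending) to rule-(ii) edges $\{v_1' C v_1,\ v_1' C v_1 C' v\}$. Both map types give the condition $\{v_1,v\}\in\ul{A}_{C'}$, as needed, but the case split is worth stating explicitly. Neither your proposal nor the paper's proof carries out the full bijectivity and edge-preservation check in detail, so on that front you are at a comparable level of rigor. The alternative ``universal cover of a graph-of-atoms'' viewpoint you sketch at the end is a legitimate high-level route but would require setting up additional machinery not present in the paper.
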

\begin{proof}
    Let $X$ be the additive product graph generated by selecting $v_1$ to be some $u$ and let $X'$ be the additive product graph generated by selecting $v_1$ to be $u'\ne u$. To establish the proposition, we will show that $X$ and $X'$ are isomorphic. First define
    \begin{align*}
        f(u_1 C_1 &\dots C_k u_{k+1}, u_{k+1}C_{k+1}\dots C_{k+\ell}u_{k+\ell + 1}) =\\
        &\begin{cases}
            f(u_1 C_1 \dots C_{k-1} u_k, u_{k+2}\dots C_{k+\ell}u_{k+\ell+1}) & \text{if $C_k = C_{k+1}$ and $u_k = u_{k+2}$,}\\
            u_1 C_1 \dots u_k C_k u_{k+2}C_{k+2}\dots C_{k+\ell}u_{k+\ell+1} &\text{if $C_k = C_{k+1}$ and $u_k \ne u_{k+2}$}\\
            u_1 C_1 \dots u_k C_k u_{k+1} C_{k+1} \dots C_{k+\ell} u_{k+\ell+1} & \text{otherwise.}
        \end{cases}
    \end{align*}

    Let $P = u'C_1u_1\dots C_k u_k C_{k+1} u$ be a string such that $u_i$ and $u_{i+1}$ are both in $\ul{A}_{C_{i+1}}$ (identifying $u'$ with $u_0$ and $u$ with $u_{k+1}$).  We claim that $\Phi(s) \coloneqq f(P, s)$ that maps $V(X)$ to $V(X')$ is an isomorphism.  Define $P'$ as the string obtained by reversing $P$, i.e., define $P'$ as $uC_{k+1}u_kC_k\dots u_1C_1u'$.  To see $\Phi(s)$ is a bijection, consider the map $\Phi'(s)\coloneqq f(P',s)$ that maps $s\in V(X')$ to $V(X)$.  It can be verified that $\Phi'$ is the inverse of $\Phi$, and thus $\Phi$ is bijective.  It can also be verified that if $s$ and $s'$ in $V(X)$ share an edge, then so do $\Phi(s)$ and $\Phi(s')$.
\end{proof}

\begin{proposition}  \label{prop:addprod-covers-sum-graph}
    $A_1\addprod\cdots\addprod A_\atms$ covers $G = A_1 + \cdots + A_\atms$.
\end{proposition}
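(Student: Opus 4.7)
The plan is to exhibit an explicit covering map $f = (f_V, f_E)$ from $X = A_1 \addprod \cdots \addprod A_\atms$ onto $G = A_1 + \cdots + A_\atms$. Define $f_V : V(X) \to V(G) = [n]$ by sending each string $v_1 C_1 v_2 \cdots C_k v_{k+1}$ to its last letter $v_{k+1}$. Every edge of $X$ is introduced by exactly one of rules (i)--(iii) in \Cref{def:additive-product} and is naturally labeled by a color $C$ and an underlying atom-edge: a rule-(i) edge $\{sCu, sCv\}$ and a rule-(iii) edge $\{v_1, v_1Cv\}$ (with $u = v_1$) both descend to $\{u, v\} \in A_C \subseteq G$, while a rule-(ii) edge $\{sCu, sCuC'v\}$ descends to $\{u, v\} \in A_{C'} \subseteq G$; I would take $f_E$ to be this descent map. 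By construction, $f_E(\{x, y\}) = \{f_V(x), f_V(y)\}$ for every edge.

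The heart of the argument is verifying the local bijection axiom: at each $x \in V(X)$, $f_E$ restricts to a bijection between edges of $X$ incident to $x$ and edges of $G$ incident to $f_V(x)$. I would proceed by case analysis on the length $k$ of the string for $x$. For $k = 0$ (the root $v_1$), only rule-(iii) edges are incident and they biject obviously with the edges of $G$ at $v_1$, one per atom-edge $\{v_1, v\} \in A_C$. For $k \geq 1$, write $x = yCu$ where $y = v_1 C_1 \cdots v_k$, $C = C_k$, $u = v_{k+1}$; the edges at $x$ then partition into rule-(i) ``peer'' edges $\{x, yCv'\}$ for those $v' \in N_{A_C}(u)$ with $yCv'$ a valid vertex, rule-(ii) ``forward'' edges $\{x, xC'w\}$ for each $C' \neq C$ and $w \in N_{A_{C'}}(u)$, and an optional ``backward'' edge $\{x, y\}$ (by rule (ii) if $k \geq 2$, by rule (iii) if $k = 1$) which exists exactly when $\{v_k, u\} \in A_C$.

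The key combinatorial identity is that the peer edges together with the optional backward edge jointly biject with $N_{A_C}(u)$: the peer edges account for $N_{A_C}(u) \setminus \{v_k\}$, since the would-be peer to $yCv_k$ fails string validity (its last two letters coincide), and the backward edge supplies $v_k$ itself precisely when $v_k \in N_{A_C}(u)$. Combined with the forward edges, which clearly biject with $\bigsqcup_{C' \neq C} N_{A_{C'}}(u)$, this yields the required bijection with all edges of $G$ at $u$, preserving atom labels. I expect the main obstacle to be this peer/backward bookkeeping, especially unifying the cases $k = 1$ and $k \geq 2$ in which the backward edge is supplied by different rules. Surjectivity of $f_V$ and $f_E$ then follows easily: by connectedness of $G$, iteratively lift any walk in $G$ starting at $v_1$ to a walk in $X$ using the local bijection at each step, reaching every vertex and edge of $G$.
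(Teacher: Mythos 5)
Your proof defines exactly the same covering map $(f_V,f_E)$ as the paper and then carries out the local-bijection verification that the paper leaves as ``it can be verified,'' so the approach is essentially identical. Your peer/backward-edge bookkeeping is the right combinatorial observation to make it go through --- in particular you correctly use that a valid string cannot repeat its last vertex letter (so $yCv_k$ is not a vertex), which is exactly why the backward edge picks up the missing neighbor $v_k$ and no double-counting occurs.
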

\begin{proof}
    Define $f_V(u_1C_1\dots C_k u_{k+1}) = u_{k+1}$. For any edge $\{s,t\}$ in $A_1\addprod\cdots \addprod A_\atms$, without loss of generality assume that the string corresponding to $s$ is at least as long as the string corresponding to $t$. And now define $f_E(\{s, t\}) := f_V(s)Cf_V(t)$ where $C$ is the last color that appears in $s$. It can be verified that $(f_V, f_E)$ is a valid covering map.
\end{proof}

Now, we will go over some common infinite graphs and see how they are realized as additive products.

\begin{fact}    \label{fact:uct}
    When $G$ is a connected $\atms$-edge graph with edge \atoms $A_1, \dots, A_\atms$, the \additiveproduct $A_1 \addprod \cdots \addprod A_\atms$ coincides with the \emph{universal cover tree} of~$G$.
\end{fact}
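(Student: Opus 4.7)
The plan is to construct an explicit graph isomorphism between $A_1 \addprod \cdots \addprod A_\atms$ and $\UCT(G)$, where $\UCT(G)$ (rooted at $v_1$) can be described as the tree whose vertices are the non-backtracking walks in $G$ starting at $v_1$ and whose edges connect each walk to its one-edge extensions.

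First I would set up the vertex bijection. Since each atom $A_{C_i}$ is a single edge of $G$, $\ul{A}_{C_i}$ consists of precisely the two endpoints of that edge; so the condition that $v_i$ and $v_{i+1}$ both lie in $\ul{A}_{C_i}$ forces $\{v_i, v_{i+1}\}$ to be exactly the edge $A_{C_i}$ (the degenerate case $v_i = v_{i+1}$ must be excluded, since otherwise the additive product would fail to be a cover in the sense of \Cref{prop:addprod-covers-sum-graph}), and the condition $C_i \ne C_{i+1}$ makes consecutive edges distinct. Hence a string $v_1 C_1 v_2 \cdots C_k v_{k+1}$ encodes a non-backtracking walk $(v_1, A_{C_1}, v_2, \ldots, A_{C_k}, v_{k+1})$ in $G$, giving an immediate bijection $\phi$ on vertex sets.

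Next I would check that $\phi$ preserves edges. Type (iii) edges $\{v_1, v_1 C v\}$ correspond to extending the trivial walk at $v_1$ by the single edge $A_C$, and type (ii) edges $\{sCu, sCuC'v\}$ with $C' \ne C$ correspond to extending the walk encoded by $sCu$ by the edge $A_{C'}$ (non-backtrackingly, since $C' \ne C$); both match the edges of $\UCT(G)$. The main remaining step, and the one that will need the most care, is to rule out type (i) edges $\{sCu, sCv\}$: $\{u,v\}$ being an edge of $\ul{A}_C$ forces $u \ne v$ to be the two endpoints of $A_C$, while the validity of $sCu$ and $sCv$ requires $u$ and $v$ each to be an endpoint of $A_C$ distinct from the terminal vertex of $s$ (or distinct from $v_1$, when $s = v_1$); this pins both $u$ and $v$ to the unique ``other endpoint'' of $A_C$, a contradiction. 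Thus no type (i) edges exist, $\phi$ preserves adjacency in both directions, and the resulting isomorphism identifies $A_1 \addprod \cdots \addprod A_\atms$ with $\UCT(G)$.
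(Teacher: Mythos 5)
Your proof is correct, but it takes a genuinely different route from the paper's. The paper argues in three short steps: the additive product of edge atoms is a tree; by \Cref{prop:addprod-covers-sum-graph} it covers $G$; and any tree covering a connected graph $G$ is isomorphic to the universal cover tree. You instead build an explicit isomorphism, identifying both the additive product and $\UCT(G)$ with the rooted tree of non-backtracking walks in $G$ starting at $v_1$. Your route is more elementary and self-contained --- it does not invoke uniqueness of the covering tree as a black box --- and it surfaces the content that the paper's one-liner elides: the assertion that the additive product of edge atoms is a tree amounts precisely to the absence of the type-(i) ``sibling'' edges $\{sCu, sCv\}$, which you verify directly by noting that a single-edge atom has only two vertices and the terminal vertex of $s$ already occupies one of them. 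You also correctly flag that \Cref{def:additive-product} as literally written would admit degenerate strings with $v_i = v_{i+1}$, and that these must be excluded for \Cref{prop:addprod-covers-sum-graph} (and hence both proofs) to go through; this is a genuine subtlety in the paper's definition worth making explicit. What the paper's proof buys is brevity and reuse of \Cref{prop:addprod-covers-sum-graph}; what yours buys is concreteness and an explicit verification that the additive product of edge atoms is in fact a tree.
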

\begin{proof}
    Indeed, the additive product of edge \atoms is a tree, which by \Cref{prop:addprod-covers-sum-graph} covers $G$. It coincides with the universal cover tree since all trees that cover $G$ are isomorphic. 
\end{proof}

\begin{fact}    \label{fact:free-prod}
    When each \atom $A_j$ is a (nonempty) vertex-transitive graph on vertex set~$[n]$, the \additiveproduct $A_1 \addprod \cdots \addprod A_\atms$ coincides with the \emph{free product} $A_1 \ast \cdots \ast A_\atms$, as defined for vertex-transitive graphs by Zno\u{\i}ko~\cite{Zno75}, and for general rooted graphs by Quenell~\cite{Que94}.
\end{fact}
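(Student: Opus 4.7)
The plan is to exhibit an explicit graph isomorphism between $A_1 \addprod \cdots \addprod A_\atms$ and the free product $A_1 \ast \cdots \ast A_\atms$, matching the string-based description of the additive product with the ``tree-of-atoms'' description of the free product. First I would recall the Znoi\u{\i}ko--Quenell construction in the vertex-transitive case: one builds the free product recursively by picking a base vertex, gluing one copy of each atom $A_j$ at this base (treating the base as \emph{any} vertex of $A_j$, which is unambiguous up to isomorphism by vertex-transitivity), and then at each newly introduced vertex $v$ that sits in a copy of $A_C$, gluing a fresh copy of $A_{C'}$ at $v$ for every color $C' \neq C$. Vertex-transitivity is precisely what makes these iterated gluings canonical, since the local structure of a glued copy of $A_j$ at $v$ does not depend on which vertex of $A_j$ is identified with $v$.

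Next I would define the candidate map $\Phi$ by sending a string $v_1 C_1 v_2 \cdots C_k v_{k+1}$ in $V(A_1 \addprod \cdots \addprod A_\atms)$ to the free product vertex obtained as follows: start at the base; interpret it as the vertex $v_1$ of a copy of $A_{C_1}$ and walk to the vertex $v_2$ of that copy; then, regarding $v_2$ as a vertex of a freshly glued copy of $A_{C_2}$, walk to $v_3$, and so on up to $v_{k+1}$. The non-backtracking condition $C_i \neq C_{i+1}$ in \Cref{def:additive-product} is exactly the constraint that each successive copy in the free product is a newly glued atom rather than the one we just came from, and the conditions $v_i, v_{i+1} \in \ul{A}_{C_i}$ ensure that consecutive letters refer to genuine (non-isolated) vertices of the atom being traversed. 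A straightforward induction on string length shows that $\Phi$ is a bijection: distinct strings label distinct vertices of the free product, and every free product vertex is reached by a unique such string.

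Finally, I would verify that $\Phi$ is an edge-isomorphism by matching the three edge types in \Cref{def:additive-product} against the edges of the free product. Edges of the form $\{sCu, sCv\}$ lie inside a single glued copy of $A_C$ and match the edges of $\ul{A}_C$ there; edges of the form $\{sCu, sCuC'v\}$ connect a vertex $u$ in one copy to a vertex $v$ in a freshly glued copy of $A_{C'}$ rooted at $u$, which (again by vertex-transitivity) corresponds precisely to an edge of $\ul{A}_{C'}$; and the base edges $\{v_1, v_1 C v\}$ realize the edges out of $v_1$ in the initial glued copies. Together with the analogous unpacking of the free-product edges, this shows that $\Phi$ preserves adjacency in both directions.

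The main obstacle I expect is bookkeeping, not substance: the free-product construction is most naturally phrased ``one whole atom at a time,'' while the additive product builds the graph ``one letter at a time,'' and reconciling the two requires consistently invoking vertex-transitivity at every gluing step to identify the root of the newly attached atom with the current vertex. Once that identification is fixed, the verification that edges correspond reduces to the routine inductive check outlined above.
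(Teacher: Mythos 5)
The paper states \Cref{fact:free-prod} without proof, so there is nothing to compare your argument against; what you have written is the natural proof, and it is essentially correct. Two refinements would tighten it. First, because all the atoms $A_j$ live on the \emph{common} vertex set $[n]$, the additive product already commits to a canonical rooting at every gluing step: the fresh copy of $A_{C'}$ attached at a vertex currently labeled $u$ is rooted at $u$ itself, regarded as a vertex of $A_{C'}$ (the condition $u \in \ul{A}_{C'}$ is exactly what this requires, and it is automatic when $A_{C'}$ is nonempty and vertex-transitive). You should describe $\Phi$ using this canonical rooting rather than ``treating the base as any vertex of $A_j$''; vertex-transitivity is then invoked only to show that Quenell's free product is root-independent, i.e.\ that the free product built with this rooting is isomorphic to the one built with any other rooting. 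This separates cleanly the two roles in the argument and makes the injectivity and surjectivity checks concrete rather than ``up to isomorphism.'' Second, for $\Phi$ to be injective you need consecutive vertex-letters to differ, $v_i \neq v_{i+1}$, matching the requirement in the free-product construction that every non-initial letter be a non-root vertex of its atom. That constraint is not literally written into \Cref{def:additive-product}, though it is clearly intended (without it even \Cref{fact:uct} would fail: for the single-edge atom the string $v_1 C_1 v_1$ would be a vertex of the additive product distinct from $v_1$, giving three vertices rather than the two of the universal cover of an edge). You should either read this into the definition explicitly or flag it as an assumption; with both points made explicit, the inductive verification you outline goes through.
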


\begin{fact} \label{fact:any-free}
    In fact, given vertex-transitive graphs $G_1,\dots,G_\atms$ on vertex sets of possibly different sizes (e.g., Cayley graphs of finite groups), we can also realize their free product as an additive product, as follows. Let $m = \mathsf{LCM}(|V(G_1)|, \dots, |V(G_\atms)|)$. Consider the following atom graphs on $m$ vertices: For each $i$ and $0\le t < m/|V(G_i)|$ let $A_{i,t}$ be a copy of $G_i$ placed on vertices $\{t|V(G_i)|+1, \dots, (t+1)|V(G_i)|\}$. Then the graph
    \[
        A_{1,0} \addprod \cdots \addprod A_{1,m/|V(G_1)|-1} \addprod A_{2,0} \addprod \cdots \addprod A_{2,m/|V(G_2)|-1} \addprod \cdots \addprod A_{\atms, 0} \addprod \cdots \addprod A_{\atms, m/|V(G_\atms)|-1}
    \]
    is isomorphic to the free product $G_1 \ast \cdots \ast G_\atms$.
\end{fact}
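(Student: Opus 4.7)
The plan is to exhibit a graph isomorphism between
\[
X := A_{1,0} \addprod \cdots \addprod A_{c, m/|V(G_c)|-1}
\]
and the free product $G_1 \ast \cdots \ast G_c$, by matching the recursive atom-gluing structure of both graphs. Note that we cannot directly invoke \Cref{fact:free-prod}, because the atoms $A_{i,t}$ are not vertex-transitive on the full vertex set $[m]$ (they have isolated vertices outside their block); instead we must exploit the specific way the blocks partition $[m]$.

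First I would establish a structural simplification of vertex-strings in $X$. For each fixed $i \in [c]$, the supports $\ul{A}_{i,0}, \ul{A}_{i,1}, \ldots, \ul{A}_{i, m/|V(G_i)|-1}$ are pairwise disjoint and together partition $[m]$, so for each $v \in [m]$ there is a unique index $t_i(v)$ with $v \in \ul{A}_{i, t_i(v)}$. Consequently, in any valid string $v_1 C_1 v_2 \cdots C_k v_{k+1}$ with $C_\ell = (i_\ell, t_\ell)$, the requirement that $v_{\ell+1}$ lie in both $\ul{A}_{C_\ell}$ and $\ul{A}_{C_{\ell+1}}$, together with $C_\ell \ne C_{\ell+1}$, forces $i_\ell \ne i_{\ell+1}$ (otherwise the two supports would be disjoint), and furthermore forces $t_\ell = t_{i_\ell}(v_\ell) = t_{i_\ell}(v_{\ell+1})$. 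Hence the ``effective'' color sequence $i_1, \ldots, i_k$ is an alternating word over $[c]$, mirroring the alphabet structure of the free product $G_1 \ast \cdots \ast G_c$.

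Next I would analyze the local atom structure at every vertex of $X$. At the root $v_1$, clause (iii) of \Cref{def:additive-product} attaches exactly one copy of $\ul{A}_{i, t_i(v_1)} \cong G_i$ at $v_1$ for each $i \in [c]$, and these copies pairwise share only $v_1$. At a non-root vertex $s = v_1 C_1 \cdots C_k v_{k+1}$, clause (i) reproduces the edges within the incoming copy of $\ul{A}_{C_k}$ that was attached at the previous level, while clause (ii) attaches, for each color $C' \ne C_k$ with $v_{k+1} \in \ul{A}_{C'}$, a fresh copy of $\ul{A}_{C'}$ at $s$; by the structural observation above, the admissible $C'$ are precisely $(i', t_{i'}(v_{k+1}))$ for $i' \in [c] \setminus \{i_k\}$, yielding one fresh copy of $G_{i'}$ per $i' \ne i_k$. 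This is exactly the recursive gluing recipe that defines the Zno\u{\i}ko free product of vertex-transitive graphs referenced in \Cref{fact:free-prod}: one copy of each $G_i$ attached at the basepoint, and at each vertex reached through a copy of $G_i$, one fresh copy of $G_j$ attached for each $j \ne i$.

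Finally I would construct the isomorphism $\phi \colon X \to G_1 \ast \cdots \ast G_c$ by induction on string length: send $v_1$ to the basepoint, and given $\phi(s)$ on $s = v_1 C_1 \cdots C_k v_{k+1}$, for each newly attached color $C' = (i', t_{i'}(v_{k+1}))$ pick any graph isomorphism $\psi \colon \ul{A}_{C'} \to G_{i'}$ sending $v_{k+1}$ to the basepoint of $G_{i'}$ (such $\psi$ exists by vertex-transitivity of $G_{i'}$), and use $\psi$ to place the vertices $s\,C'\,w$ inside the fresh copy of $G_{i'}$ glued at $\phi(s)$ in the free product. The main technical obstacle is bookkeeping: one must verify that distinct strings in $X$ correspond to distinct vertices of the free product (injectivity), that every free-product vertex is reached (surjectivity), and that the three edge clauses of \Cref{def:additive-product} correspond precisely to edges of the free product (adjacency preservation). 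Each of these checks is direct once the choices of $\psi$ are fixed level by level, because the atom-gluing recipes match exactly by the previous paragraph and each $\psi$ is itself a graph isomorphism.
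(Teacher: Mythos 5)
The paper states both \Cref{fact:free-prod} and \Cref{fact:any-free} without proof, treating them as direct consequences of \Cref{def:additive-product} and illustrating the latter only by \Cref{fig:free-prod-eg}; so there is no argument in the paper to compare against, and your proof can only be judged on its own terms.

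Your proof is correct and is the natural direct argument. The crucial observation --- that for each fixed $i$ the supports $\ul{A}_{i,0},\dots,\ul{A}_{i,m/|V(G_i)|-1}$ partition $[m]$, so that in any valid string consecutive colors $C_\ell=(i_\ell,t_\ell)$, $C_{\ell+1}=(i_{\ell+1},t_{\ell+1})$ must have $i_\ell\ne i_{\ell+1}$ and each block index is forced to equal $t_{i_\ell}(v_{\ell+1})$ --- is precisely what collapses the enlarged color alphabet $\{(i,t)\}$ back to $[c]$ and makes the local gluing data at every vertex of the additive product (one fresh copy of $G_{i'}$ per $i'\ne i_k$ at a non-root vertex, one per $i\in[c]$ at the root, each anchored at the current vertex) match the recursive recipe for $G_1 * \cdots * G_\atms$. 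Vertex-transitivity of each $G_{i'}$ supplies the anchored isomorphisms $\psi$, and the inductive definition of $\phi$ then closes; the injectivity/surjectivity/adjacency checks you defer are indeed routine. One point you should make explicit rather than bury in ``bookkeeping'': your construction (and, in fact, \Cref{def:additive-product} as printed, whose constraints do not literally exclude $v_\ell = v_{\ell+1}$) relies on the non-backtracking condition $v_\ell\ne v_{\ell+1}$. Without it, strings such as $sC'v_{k+1}$ would be valid additive-product vertices and your $\phi$ would try to send them to $\phi(s)$ followed by the basepoint of $G_{i'}$, which is not a free-product vertex. That condition is clearly intended in the paper --- otherwise \Cref{fact:uct} fails and the map in \Cref{prop:addprod-covers-sum-graph} is not a local isomorphism --- but since your proof depends on it, say so.
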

\Cref{fig:free-prod-eg} illustrates \Cref{fact:any-free} in the case of the free product $C_3 \ast C_2$ (the Cayley graph of the modular group, the sixth graph in \Cref{fig:many-products}).
\begin{figure}[H]
  \centering
  \includegraphics[width=.6\textwidth]{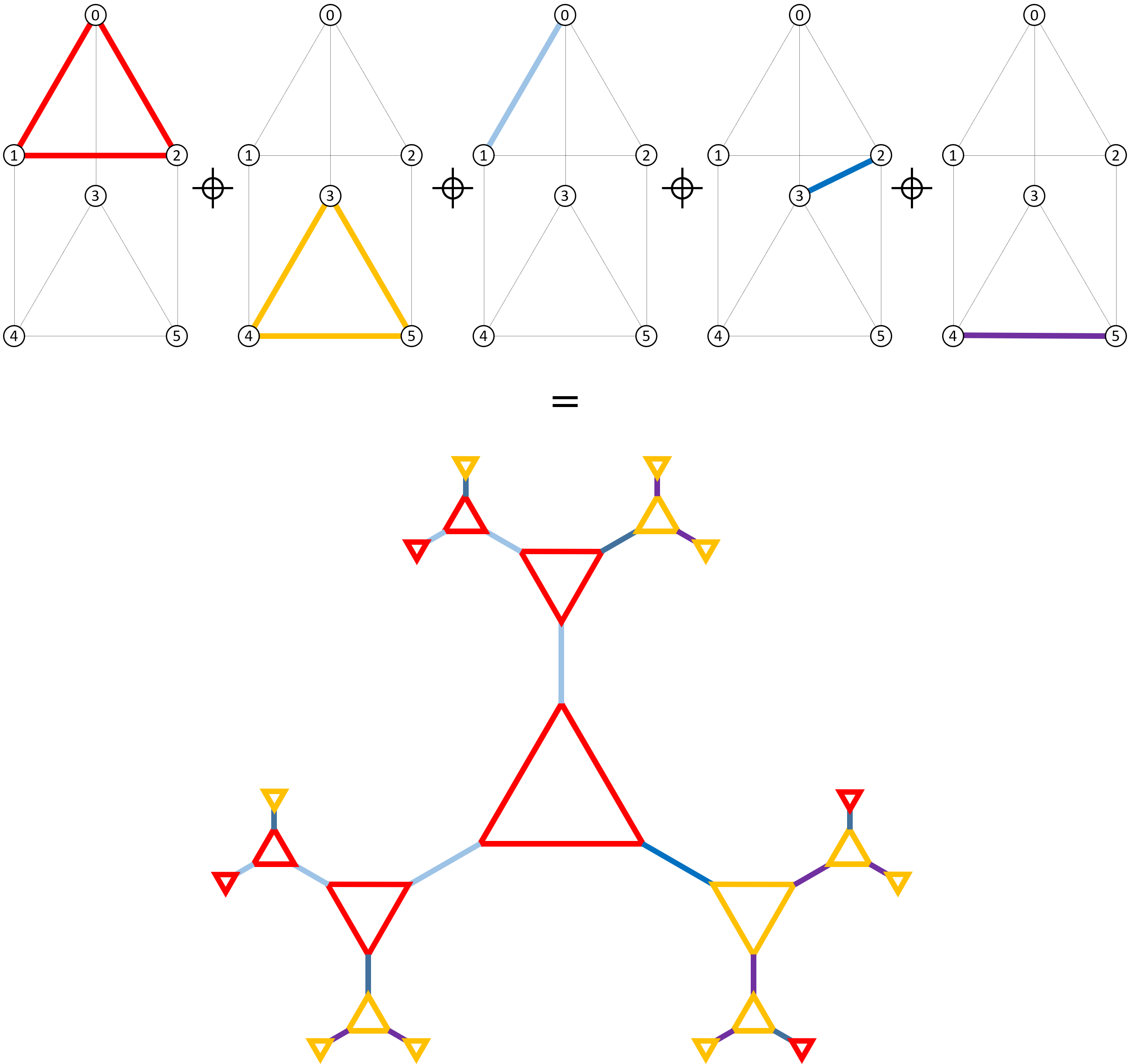}
    \caption{Realizing $C_3 \ast C_2$ as an additive product of five atoms.}
  \label{fig:free-prod-eg}
\end{figure}

\begin{examples}
    All of the graphs in \Cref{fig:many-products} are additive products.  The first and last are from \Cref{fig:add-prod-eg}.  The second through fourth (namely, $\ITree_3$, $\ITree_{4}$, and the biregular $\ITree_{3,4}$) are all universal cover trees, and hence additive products by \Cref{fact:uct}.  The fifth and seventh are additive products as from \Cref{fact:free-prod}; they are $C_4 \addprod C_4$ and $C_4 \addprod C_4 \addprod C_4$, respectively.  Finally, the sixth graph is an additive product as illustrated in \Cref{fig:free-prod-eg}.
\end{examples}

\begin{remark}
    Let $X$ be the second additive product graph in \Cref{fig:add-prod-eg}. Our techniques show the existence of an infinite family of $X$-Ramanujan graphs --- $X$ is a notable example of a graph that is neither a tree nor a free product of vertex transitive graphs that is also an additive product.
\end{remark}

\subsection{Lifts and balanced lifts} \label{sec:add-lift}

\begin{definition}
    In this section, a \emph{graph} $G = (V,E)$ will mean a (possibly infinite) undirected graph, with parallel edges allowed, but loops disallowed.  Thus $E$ should be thought of as a multiset (with its elements being sets of cardinality~$2$).  
\end{definition}
\begin{definition}
    Given an (undirected) graph $G = (V,E)$, its \emph{directed version} is the directed graph $\vec{G} = (V,\vec{E})$, where the multiset $\vec{E}$ is formed replacing each edge $\{u,v\} \in E$ with a corresponding \emph{dart} --- i.e., pair of directed edges $(u,v)$, $(v,u)$.  Given one edge~$e$ in such a dart, we write $e^{-1}$ for the other edge.  A warning: if $E$ has $c$ copies of an edge $\{u,v\}$, then $\vec{E}$ will contain $c$ pairs $(u,v)$, $(v,u)$, and the $(\cdot)^{-1}$ notation refers to a fixed perfect matching on those pairs.
\end{definition}

\begin{definition}
    Given an $\vtcs$-vertex graph $G = (V,E)$, we identify the vertices with an orthonormal basis of $\C^\vtcs$; the vector for vertex~$v$ is denoted $\ket{v}$.\footnote{We are using the Dirac bra-ket notation, in which $\ket{v}$ denotes a column vector and $\bra{v}$ denotes its conjugate-transpose $\ket{v}^\dagger$.}  A directed edge $(u,v) \in \vec{E}$ may be associated with the matrix $\ketbra{v}{u}$. The \emph{adjacency matrix} of $G$ is the Hermitian matrix $\Adj(G) = \C^{\vtcs \times \vtcs}$  defined by
    \[
        \Adj(G) = \sum_{(u,v) \in \vec{E}} \ketbra{v}{u}.
    \]
\end{definition}
\begin{definition}  \label{def:lift}
    Let $G = (V,E)$ be a graph and let $\sigma : \vec{E} \to \symm{\nlift}$ be a labeling of directed edges by permutations of~$[\nlift]$ satisfying $\sigma(e^{-1}) = \sigma(e)^{-1}$.  The associated \emph{$\nlift$-lift} graph is defined as follows.  The lifted vertex set is $V \times [\nlift]$; we may sometimes identify these vertices with vectors $\ketket{v}{i} \in \C^V \otimes \C^\nlift$. The lifted oriented edge set consists of a dart $((u,i), (v,\sigma(u,v)i))^\pm$ for each dart $(u,v)^\pm \in \vec{E}$ and each $i \in [\nlift]$.
\end{definition}
\begin{definition}
    Given an $\vtcs$-vertex graph $G = (V,E)$ and a $\repdim \in \N^+$ we introduce the \emph{$\repdim$-extended adjacency matrix} $\Adj_\repdim(G)$, a Hermitian matrix in $\C^{\vtcs \repdim \times \vtcs \repdim}$ defined by
    \[
        \Adj_\repdim(G) = \Adj(G) \otimes \Id_\repdim,
    \]
    where $\Id_\repdim$ denotes the $\repdim \times \repdim$ identity operator.  When $\repdim = 1$ this is the usual adjacency matrix. In general, $\Adj_\repdim(G)$ may be thought of as the adjacency matrix of the \emph{trivial} $\repdim$-lift of~$G$, the one where all directed edges are labeled by the identity permutation.  This graph consists of $\repdim$ disjoint copies of~$G$.
\end{definition}

\begin{definition}
    Given a graph $G = (V,E)$ and a group $\Gamma$, a \emph{$\Gamma$-potential} is simply an element~$Q$ of the direct product group $\Gamma^V$.  We think of $Q$ as assigning a group element, written $Q_v$ or $Q(v)$, to each vertex of~$G$.  We will be concerned almost exclusively with the case $\Gamma = \symm{\nlift}$, the symmetric group.
\end{definition}

Let $\Unitaries{\repdim}$ denote the $\repdim$-dimensional unitary matrices.  In this work we will assume that all group representations are unitary.
\begin{definition}
    Recall that if  $\Gamma$ is a group with $\repdim$-dimensional representation $\pi : \Gamma \to \Unitaries{\repdim}$, and $V$ is a set of cardinality~$\vtcs$, then the associated \emph{outer tensor product representation} of the group $\Gamma^V$ is $\pi^{\boxtimes V} : \Gamma^V \to \Unitaries{\vtcs \repdim}$ defined by
    \[
        \pi^{\boxtimes V}(Q) = \sum_{v \in V} \ketbra{v}{v} \otimes \pi(Q_v).
    \]
\end{definition}

\begin{definition}  \label{def:liftAdj}
    Given an $\vtcs$-vertex graph~$G = (V,E)$, a $\Gamma$-potential~$Q$, and a $\repdim$-dimensional unitary representation $\pi : \Gamma \to \Unitaries{\repdim}$, we introduce the notation $\LiftAdj{Q}{\pi}(G)$ for the $\vtcs \repdim$-dimensional \emph{$\pi$-lifted adjacency matrix}
    \begin{align*}
        \LiftAdj{Q}{\pi}(G) &= \pi^{\boxtimes V}(Q)^\dagger \cdot \Adj_\repdim(G) \cdot \pi^{\boxtimes V}(Q)
                                     = \sum_{(u,v) \in \vec{E}} \ketbra{v}{u} \otimes \pi(Q_v^{-1} Q_u).
    \end{align*}
\end{definition}
\begin{remark}  \label{rem:more-general}
    In the setting of \Cref{def:liftAdj}, consider the directed edge-labeling $\sigma : \vec{E} \to \Gamma$ defined by $\sigma(u,v) = Q_v^{-1} Q_u$.  Then $\LiftAdj{Q}{\pi}$ is the matrix
    \[
        \sum_{(u,v) \in \vec{E}} \ketbra{v}{u} \otimes \pi(\sigma(u,v)).
    \]
    This matrix was introduced by Hall, Puder, and Sawin~\cite{HPS18} under the notation $A_{\sigma, \pi}$.  In fact, they studied such matrices for \emph{general} edge-labelings $\sigma$ satisfying $\sigma(e^{-1}) = \sigma(e)^{-1}$, not just the so-called \emph{balanced} ones arising as $\sigma(u,v) = Q_v^{-1} Q_u$ from a potential~$Q$.  We will recover their level of generality shortly, when we consider sum graphs.
\end{remark}
\begin{remark}
    For many representations $\pi$ --- e.g., the \emph{standard} representation \mbox{$\stdrep : \symm{\nlift} \to \Unitaries{\nlift-1}$} of the symmetric group --- it is not natural to pick one particular unitary representation among all the isomorphic ones.  However if $\pi_1$ is isomorphic to~$\pi_2$ via the unitary~$U$, then $\pi_1^{\boxtimes V}$ is conjugate to~$\pi_2^{\boxtimes V}$ via the unitary $\Id \otimes U$, and the same is true of $\LiftAdj{Q}{\pi_1}(G)$ and $\LiftAdj{Q}{\pi_2}(G)$.  Hence these two matrices have the same spectrum and characteristic polynomial, which is what we mainly care to study anyway.
\end{remark}

\begin{remark}
    Another representation of $\symm{\nlift}$ is the $1$-dimensional \emph{sign} representation, $\sgnrep : \symm{\nlift} \to \{\pm 1\}$.  When $\nlift = 2$, the $\stdrep$ and $\sgnrep$ representations coincide, and we obtain the well-known correspondence between $2$-lifts and \emph{edge-signings} of the adjacency matrix of~$G$.
\end{remark}

\begin{definition}      \label{def:balanced-lift}
    Suppose $G = (V,E)$ is a graph.  When $\Gamma = \symm{\nlift}$ and $\pi : \symm{\nlift} \to \Unitaries{\nlift}$ is the usual \emph{permutation} representation, the matrix $\LiftAdj{Q}{\permrep}(G)$ is the adjacency matrix of a certain $\nlift$-lift of~$G$, which we call a \emph{balanced $\nlift$-lift}.  We write~$G^Q$ for this lifted graph, the one obtained from the edge-labeling $\sigma(u,v) = Q_v^{-1} Q_u$ discussed in \Cref{rem:more-general}.  The vertex set of $G^Q$ is $V \times [\nlift]$, and the directed edge set is formed as follows: for each dart $e = (u,v)^{\pm} \in \vec{E}$ and each $i \in [\nlift]$, we include dart $((u,i),(v,j))^{\pm}$ if and only if $Q_v j = Q_u i$.  We remark that a balanced $\nlift$-lift $G^Q$ conists of $\nlift$ disjoint copies of~$G$.
\end{definition}
\begin{remark}  \label{rem:acyclic}
    In general, not all lifts of~$G$ are balanced lifts.  This \emph{is} true, though, if $G$ is an acyclic graph; indeed, it's not hard to check that for each connected component of an acyclic graph, the balanced lifts are in $|\Gamma|$-to-$1$ correspondence with general lifts.  As mentioned earlier, we will recover the full generality of lifts shortly when we consider sum graphs.
\end{remark}

\subsection{Additive lifts}     \label{sec:additive-lifts}

\begin{definition}   \label{def:additive-lift}
    When $H = A_1 + \cdots + A_\atms$ is a sum graph on vertex set~$V$, $\calQ = (Q_1, \dots, Q_{\atms})$ is a sequence of $\Gamma$-potentials $Q_i : V \to \Gamma$, and $\pi : \Gamma \to \Unitaries{\repdim}$ is a representation, we introduce the notation
    \[
        \LiftAdj{\calQ}{\pi}(H)= \sum_{i=1}^\atms \LiftAdj{Q_i}{\pi}(A_i).
    \]
    In case $\Gamma = \symm{\nlift}$ and $\pi = \permrep$, this is the adjacency matrix of a new sum graph
    \[
        H^{\calQ} = A_1^{Q_1} + \cdots + A_\atms^{Q_\atms}
    \]
    that we call the \emph{additive $\nlift$-lift of~$H$ by~$\calQ$}.  Here a balanced $\nlift$-lift is performed on each \atom, and the results are summed together.
\end{definition}
\begin{remark}  \label{rem:additive-general-lift}
    Suppose we regard an ordinary graph $G = (V,E)$ as a sum graph $A_1 + \cdots + A_\atms$ where the \atoms $A_j$ are single edges, as in \Cref{def:edge-atoms-plain}.  In this case, every $\nlift$-lift of $A_j$ is a balanced $\nlift$-lift (indeed, in $|\Gamma|$ different ways, as noted in \Cref{rem:acyclic}).  Thus the  additive $\nlift$-lifts of~$G$ --- when viewed again as ordinary graphs --- recover all ordinary $\nlift$-lifts of~$G$.
\end{remark}

\begin{remark}      \label{rem:spectrum-of-lift}
    As is well known, the spectrum of $H^\calQ$ --- i.e., of $\LiftAdj{\calQ}{\permrep}(H)$ --- is the multiset-union of the ``old'' spectrum of $\Adj(H)$ (of cardinality $\vtcs$), as well as ``new'' spectrum (of cardinality $\vtcs \nlift - \vtcs$).  As observed in~\cite{HPS18}, this ``new'' spectrum is precisely the spectrum of $\LiftAdj{\calQ}{\stdrep}(H)$, where $\stdrep : \symm{\nlift} \to \Unitaries{\nlift-1}$ is the \emph{standard} representation of~$\symm{\nlift}$.
\end{remark}

The following fact is important for the proof of our main theorem.
\begin{proposition}    \label{prop:lift-is-quotient}
    If $H^\calQ$ is connected, then it is a quotient of $X = A_1\addprod \cdots \addprod A_\atms$.
\end{proposition}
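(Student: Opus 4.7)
The plan is to construct an explicit covering map $f = (f_V, f_E) \colon X \to H^{\calQ}$. For each string $s = v_1 C_1 v_2 \cdots C_k v_{k+1} \in V(X)$, let $i_s \in [\nlift]$ denote the index
\[
    i_s = \bigl[Q_{C_k}(v_{k+1})^{-1} Q_{C_k}(v_k)\bigr]\cdot \bigl[Q_{C_{k-1}}(v_k)^{-1} Q_{C_{k-1}}(v_{k-1})\bigr]\cdots \bigl[Q_{C_1}(v_2)^{-1} Q_{C_1}(v_1)\bigr] \cdot 1,
\]
where the product of permutations is read right-to-left, with the convention $i_{v_1} = 1$. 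I then define $f_V(s) = (v_{k+1}, i_s)$, and let $f_E$ send each edge $\{s,t\} \in E(X)$ arising from $\ul{A}_C$ (via one of the three clauses of \Cref{def:additive-product}) to the unique edge between $f_V(s)$ and $f_V(t)$ in the atom copy $A_C^{Q_C}$ of $H^{\calQ}$. The definition is motivated by a telescoping observation: in the balanced lift $A_C^{Q_C}$, walking from $(u,i)$ along \emph{any} path $u = w_0, \dots, w_\ell = v$ always lands at $(v, Q_C(v)^{-1} Q_C(u) \cdot i)$, because intermediate factors of the form $Q_C(w_m) Q_C(w_m)^{-1}$ cancel. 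This is precisely what lets $f_V$ depend only on the sequence of ``ball transitions'' recorded by the string, not on the internal walk within each ball.

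To check that $f$ is a covering I would verify three things in turn. First, edges of $X$ map to edges of $H^{\calQ}$: a case analysis over the three edge types of \Cref{def:additive-product}, each invoking the identity $i_{sCv} = Q_C(v)^{-1} Q_C(u) \cdot i_{sCu}$ for $\{u,v\} \in \ul{A}_C$, which is exactly the adjacency rule for $A_C^{Q_C}$ from \Cref{def:balanced-lift}. Second, $f$ is a local isomorphism: at a string $s$ ending with $C_k v_{k+1}$, the neighbors of $s$ split into type-(i) neighbors inside the current $C_k$-ball and type-(ii) transitions entering new $C'$-balls for $C' \ne C_k$; these contribute $\deg_{A_{C_k}}(v_{k+1}) + \sum_{C' \ne C_k}\deg_{A_{C'}}(v_{k+1}) = \sum_C \deg_{A_C}(v_{k+1})$ neighbors, matching the degree of $f_V(s)$ in $H^{\calQ}$, and distinct neighbors map to distinct edges because edges arising from distinct atoms land in distinct atom copies of $H^{\calQ}$. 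Third, $f$ is surjective --- the only place the hypothesis that $H^{\calQ}$ is connected is used. Given any $(w, j) \in V(H^{\calQ})$, I pick a walk from $(v_1, 1)$ to $(w, j)$ in $H^{\calQ}$ and lift it step by step to a walk in $X$ starting at $v_1$: each step within the same atom extends the walk inside the current ball, while each step in a new color extends the string with a new ball-transition. The endpoint $s \in V(X)$ then satisfies $f_V(s) = (w, j)$; edge surjectivity follows from vertex surjectivity plus the local bijection.

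The main obstacle is the bookkeeping in the local-isomorphism step: for $s = s' C_k v_{k+1}$, one must check that the type-(i) and type-(ii) neighbors together exhaust each atom around $f_V(s)$ exactly once. The constraint $C_i \ne C_{i+1}$ from \Cref{def:additive-product} is exactly what makes the two cases land in disjoint atom copies of $H^{\calQ}$ (type-(i) neighbors contributing only to $A_{C_k}^{Q_{C_k}}$ and type-(ii) neighbors only to $A_{C'}^{Q_{C'}}$ for $C' \ne C_k$), so the counts add up to $\sum_C \deg_{A_C}(v_{k+1})$ with no double-counting; once this is in hand, the remaining checks are routine.
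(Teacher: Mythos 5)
Your proof is correct and takes a genuinely different route from the paper's. The paper's argument is indirect: it writes $H^{\calQ}$ as a sum graph of $\nlift\atms$ copies of the lifted atoms, shows that the additive product $X'$ of those copies is \emph{isomorphic} to $X$, and then invokes the already-established fact that an additive product covers its sum graph (\Cref{prop:addprod-covers-sum-graph}). Your argument is direct: you build an explicit covering map $X \to H^{\calQ}$ by assigning to each string $s = v_1 C_1 v_2 \cdots C_k v_{k+1}$ the lift-index $i_s$ obtained by composing the ball-to-ball transitions $Q_{C_m}(v_{m+1})^{-1}Q_{C_m}(v_m)$, exploiting the telescoping property of balanced lifts (walking anywhere inside a single balanced atom $A_C^{Q_C}$ depends only on endpoints). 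This is the same mechanism the paper uses in disguise when it establishes the isomorphism $X' \cong X$ via its observations (1)--(4), but your formulation is more hands-on, avoids re-proving an isomorphism of infinite graphs, and makes the role of the connectedness hypothesis completely transparent: it is used exactly once, for surjectivity via path-lifting.

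One small imprecision worth flagging in the local-isomorphism step: the formal neighbor set of a non-root string $t = v_1 C_1 \cdots C_k v_{k+1}$ in $X$ is not literally ``type-(i) same-length siblings plus type-(ii) extensions.'' There is also a ``parent'' edge back to the prefix $v_1 C_1 \cdots C_{k-1} v_k$ (formally a type-(ii) edge read in reverse, or a type-(iii) edge when $k=1$), present iff $\{v_k, v_{k+1}\}$ is an edge of $\ul{A}_{C_k}$. This parent edge also lands in the $A_{C_k}^{Q_{C_k}}$ copy, and it exactly compensates for the sibling $v_1 C_1 \cdots v_k C_k v_k$ that is forbidden by the no-consecutive-repeats constraint of a valid string. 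So your total $\deg_{A_{C_k}}(v_{k+1}) + \sum_{C' \ne C_k}\deg_{A_{C'}}(v_{k+1})$ is correct, and the images are distinct because the $H$-vertices reached (the various $v$ together with $v_k$) are distinct; but the attribution ``type (i) contributes $\deg_{A_{C_k}}(v_{k+1})$'' should be read as ``neighbors whose edge lies in the $C_k$-ball,'' which includes the parent. With that clarification the covering check goes through cleanly.
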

\begin{proof}
    Recall that $H^\calQ$ can be written as $A_1^{Q_1}+\cdots + A_\atms^{Q_\atms}$. Expressing each $A_i^{Q_i}$ as a sum graph of $\nlift$ disjoint copies of $A_i$, we obtain an expression of $H^\calQ$ as a sum graph of $\nlift\atms$ \atoms.
    \[
        H^\calQ = \sum_{i=1}^c \sum_{j=1}^\nlift A_{i,j}
    \]
    We show that the additive product $X' = A_{1,1} \addprod \cdots \addprod A_{1,\nlift} \addprod \cdots \cdots \addprod A_{c,1} \addprod \cdots \addprod A_{c,\nlift}$ is isomorphic to $X$, and our proposition then follows from \Cref{prop:addprod-covers-sum-graph}.

    We note the following:
    \begin{enumerate}
        \item \label{item:unique-lift-map} For any $u,v$ in $H$ such that $u,v\in\ul{A}_C$ and $j\in [\nlift]$, there is unique $C',j\in[\nlift]$ such that $(u,j)$ and $(v,j')$ are in $\ul{A}_{C,C'}$.
        \item \label{item:unique-lift-edge} Further, for any edge $uCv$ in $H$ and $j\in[\nlift]$, there is unique $C',j'\in[\nlift]$ such that $(u,j)(C,C')(v,j')$ is an edge.
        \item \label{item:valid-projection} For any pair $(u,j),(v,j')$ in $\ul{A}_{C,C'}$ in $H^\calQ$, $u$ and $v$ must be in $\ul{A}_C$.
        \item \label{item:valid-projection-edge} For any edge $(u,j)(C,C')(v,j')$ in $H^\calQ$, $uCv$ must be an edge in $H$.
    \end{enumerate}

    Without loss of generality, we can assume $X$ is rooted at $u$ and $X'$ is rooted at $(u,1)$. For any vertex $v = (u,1)(C_1,C_1')(u_2,i_2)(C_2,C_2')\dots(C_k,C_k')(u_{k+1},i_{k+1})$ in $X'$ where each $(C_i,C_i')\in [c]\times [\nlift]$, define $\vphi(v)$ as $uC_1u_2\dots C_ku_{k+1}$. $\vphi$ can be seen as an isomorphism from observations \Cref{item:unique-lift-map}, \Cref{item:unique-lift-edge}, \Cref{item:valid-projection}, and \Cref{item:valid-projection-edge}.
\end{proof}

\begin{remark}  \label{rem:connected-comp-lift-is-quotient}
    Observe that even when $H^{\calQ}$ is not connected, the proof of \Cref{prop:lift-is-quotient} gives us that every connected component of $H^{\calQ}$ is a quotient of $A_1\addprod\cdots\addprod A_\atms$. In fact, it tells us that for any connected component of $H^\calQ$ composed of atoms $B_1+\cdots+B_t$, the additive product $B_1\addprod \cdots \addprod B_t$ is isomorphic to $A_1 \addprod \cdots \addprod A_\atms$.
\end{remark}

\section{The \factorpolynomial} \label{sec:factor-poly}

In this section we introduce the \emph{additive characteristic polynomial} of a sequence of matrices.  This generalizes the matching polynomial of a graph. It also generalizes the ``$r$-characteristic polynomial'' of a matrix introduced in~\cite{Rav16,LR18}; see \Cref{rem:mohan}.

\begin{definition}
    Given a ``vertex set'' $[n]$, a \emph{walk} $\omega$ is a sequence of vertices $v_0 v_1 \dots v_t$. We also represent a walk as a corresponding sequence of directed edges $(v_i, v_{i+1})$ between consecutive vertices. We call the walk \emph{closed} if $v_0= v_t$, and call $t$ the \emph{length} of the walk. A self-avoiding walk is a walk $v_0\dots v_t$ where all vertices are distinct with the exception that we allow $v_0=v_t$.  Given a matrix $A$ indexed by~$[n]$, the associated \emph{weight} of walk~$\omega$ is $w_A(\omega) = \prod_{(i,j)\in\omega} A_{ij}$.  In the unweighted case, this is $0$ or~$1$ depending on whether or not $\omega$ is a valid walk in the directed graph associated to~$A$.  A \emph{pointed cycle} is defined to be a self-avoiding closed walk of length at least~$1$, with the ``point'' being the initial/terminal vertex.  We use the term \emph{cycle} to refer to a pointed cycle in which the point is ``forgotten'' (i.e., the cycle is treated as a set of arcs, with no distinguished starting point).
\end{definition}
\begin{definition}
    Given a sequence of matrices $A_1, \dots, A_\atms$ with color set $[\atms]$, a \emph{colored cycle} (or \emph{colored walk}, etc.)\ is a pair $\coloredcyc = (\gamma, j)$ where $\gamma$ is a cycle on~$[n]$ and $j \in [\atms]$ is a color.  The key aspect of this definition is that the \emph{weight} of this colored cycle is defined to be~$w_{A_{j}}(\coloredcyc)$.  We will write this simply as $w(\coloredcyc)$ when $A_1, \dots, A_\atms$ are understood.  We write $\CCyc(A_1, \dots, A_\atms)$ for the collection of all colored cycles of nonzero weight.  (It won't actually matter whether or not we include colored cycles of zero weight, but it is conceptually simpler to exclude them in the unweighted case.)
\end{definition}
\begin{definition}  \label{def:triv-heap-col-cyc}
    Given $A_1, \dots, A_\atms$ as before, a \emph{trivial heap of colored cycles} (cf.~\Cref{def:heap}) is a subset $M \subseteq \CCyc(A_1, \dots, A_\atms)$ in which all colored cycles in~$M$ are pairwise vertex-disjoint.  We define the \emph{length} and \emph{weight} of~$M$ (respectively) to be
    \[
        \length(M) = \sum_{\coloredcyc \in M} \length(\coloredcyc), \qquad w(M) = \prod_{\coloredcyc \in M} w(\coloredcyc).
    \]
    We remark that $\length(M)$ is also the number of vertices that $M$ touches.  We reserve the notation $|M|$ for the number of colored cycles in~$M$.  We write $\THeap(A_1, \dots, A_\atms)$ for the collection of all trivial heaps of colored cycles.
\end{definition}
\begin{example}
    In the monochromatic and unweighted case, a trivial heap of (colored) cycles is a collection of vertex-disjoint directed cycles within a directed graph. Such subgraphs go under many names, such as ``partial $2$-factor'', ``linear subgraph'', or ``sesquilinear subgraph''.
\end{example}
\begin{example}
    In the case where $G$ is an undirected graph with edge \atoms $A_1, \dots, A_\atms$, a trivial heap of colored cycles is just a (partial) matching in~$G$.
\end{example}

\begin{definition}
    Let $A_1, \dots, A_\atms$ be matrices indexed by~$[n]$.  We define their \emph{\factorpolynomial} in indeterminate~$x$ to be
    \begin{equation}    \label{eqn:factor-coeffs}
        \factorpoly{A_1, \dots, A_\atms}{x}  = \sum_{k=0}^n b_k x^{n-k}, \qquad b_k = \sum_{\substack{M \in \THeap(A_1, \dots, A_\atms) \\ \length(M) = k}} (-1)^{|M|} w(M).
    \end{equation}
\end{definition}

\begin{example}                                        \label{eg:charpoly}
    In the monochromatic case, the \factorpolynomial $\factorpoly{A}{x}$ is the same as the characteristic polynomial $\charpoly{A}{x}$.  This is equivalent to what is sometimes called the \emph{Coefficients Theorem for Weighted Digraphs}. It follows easily by expanding the determinant in terms of permutations. 
    See~\cite[p.~36]{CDS80} for some history of this fact.
\end{example}
\begin{example}                                        \label{eg:matchingpoly}
    In the case where $G$ is an undirected graph with edge \atoms $A_1, \dots, A_\atms$, the \factorpolynomial $\factorpoly{A_1, \dots, A_\atms}{x}$ is the same as the matching polynomial defined below
    \[
        \matchpoly{G}{x} := \sum_{M \subseteq E, M\in\Matchings(G)} (-1)^{|M|} x^{n-2|M|}
    \]
\end{example}

\begin{definition}
    Let $A$ be an $n \times n$ matrix and let $Q$ be a diagonal $n \times n$ matrix with all diagonal entries from~$\{\pm 1\}$.  Then we call the matrix $Q^{\dagger}\!A Q$ a \emph{balanced edge-signing} of~$A$.  A \emph{random} balanced edge-signing of~$A$ refers to the case of $\bQ^{\dagger\!}A \bQ$, where the diagonal entries of~$\bQ$ are chosen independently and uniformly at random from~$\{\pm 1\}$.
\end{definition}
\begin{remark}  \label{rem:Q-dagger}
    The reader may find it unnecessarily complicated for us to have written $Q^\dagger$ here, since $Q^\dagger = Q$ when $Q$ is diagonal with $\pm 1$ entries.  Also, the suggestion of complex conjugation may look strange given that $A$'s entries are only assumed to be from a commutative ring. However, $A$'s entries will usually be complex numbers or polynomials, and in the future we may sometimes consider conjugating such $A$ by a diagonal matrix~$Q$ whose entries are general complex numbers.  In these cases we will indeed want to write $Q^{\dagger\!}A Q$.
\end{remark}
\begin{remark}
    Consider the unweighted case, when $A$ is the adjacency matrix of a directed graph~$G$. Then a ``balanced edge-signing'' $Q^{\dagger\!}A Q$ is indeed the adjacency matrix of a particular kind of edge-signing of~$G$ called ``balanced'' in the literature~\cite{Zas82}; namely, an edge-signing with the property that the product the signs of the arcs around any directed cycle is~$+1$.
\end{remark}
\begin{remark}  \label{rem:balanced-is-general-for-forests}
    Consider the plain, monochromatic case, when $A$ is the adjacency matrix of a simple undirected graph~$G$.  Suppose also that $G$ is a forest (i.e., it is acyclic, except insofar as an undirected edge is considered to be two opposing directed arcs).  Then \emph{every} possible edge-signing of~$G$ corresponds to a balanced edge-signing; in fact, for each usual edge-signing there are $2^k$ corresponding balanced edge-signings, where $k$ is the number of connected components of~$G$.  Thus in this case, a random balanced edge-signing is equivalent to the usual notion of a uniformly random edge-signing.
\end{remark}

\begin{theorem}                                     \label{thm:factor-poly-comes-from-random-edge-signing}
    Let $A_1, \dots, A_\atms \in \C^{n \times n}$.  Then
    \[
        \factorpoly{A_1, \dots, A_\atms}{x} = \E\bracks*{\charpoly{\bQ_1^{\dagger\!}A_1 \bQ_1 + \cdots + \bQ_\atms^{\dagger\!}A_\atms \bQ_\atms}{x}},
    \]
    where $\bQ_1, \dots, \bQ_\atms$ are independent random balanced edge-signing matrices.
\end{theorem}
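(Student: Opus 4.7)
The plan is to apply the Coefficients Theorem for Weighted Digraphs (cf.~\Cref{eg:charpoly}) to the random matrix $B = \bQ_1^{\dagger}A_1\bQ_1 + \cdots + \bQ_\atms^{\dagger}A_\atms\bQ_\atms$ and then take expectation term-by-term. Since each entry factors as $B_{uv} = \sum_{C=1}^{\atms} \bQ_C(u)(A_C)_{uv}\bQ_C(v)$, the Coefficients Theorem writes
\[
    \charpoly{B}{x} = \sum_L (-1)^{c(L)} x^{n - v(L)} \prod_{(u,v)\in L} B_{uv},
\]
where $L$ ranges over (partial) linear subgraphs --- vertex-disjoint unions of directed cycles, self-loops included --- with $c(L)$ the number of cycles and $v(L)$ the number of vertices covered. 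Expanding each factor $B_{uv}$ into its $\atms$ color summands turns the product into a sum over arc-colorings $\phi : \mathrm{arcs}(L) \to [\atms]$.

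The crux is taking the expectation over the independent $\pm 1$ signings. For each vertex $u \in L$ and each color $C$, the variable $\bQ_C(u)$ appears once per endpoint-incidence of $u$ in an arc colored $C$ (a non-self-loop incident arc contributes $1$, a self-loop at $u$ contributes $2$); survival of the expectation requires every such count to be even. A non-self-loop vertex of $L$ has exactly one incoming and one outgoing arc, so the parity condition forces these two arcs to share a color, and propagating around the cycle $\phi$ must be constant on that cycle. Self-loops satisfy parity automatically and are themselves monochromatic length-$1$ cycles. Hence the surviving pairs $(L,\phi)$ are precisely those with $\phi$ constant on every cycle, contributing $(-1)^{c(L)} x^{n-v(L)} \prod_{\gamma \in L} w_{A_{\phi(\gamma)}}(\gamma)$ to the expectation.

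Finally, such a pair $(L,\phi)$ is the same data as a trivial heap $M \in \THeap(A_1,\dots,A_\atms)$: each cycle of $L$ together with its assigned color is an element of $\CCyc(A_1,\dots,A_\atms)$, and the overall collection is vertex-disjoint. Under this bijection $c(L) = |M|$, $v(L) = \length(M)$, and the product of cycle weights equals $w(M)$, so the resulting sum collapses to the definition in \Cref{eqn:factor-coeffs}. The most delicate step is really the parity/expectation argument above; once that is in place, the rest is bookkeeping --- though I would still want to verify carefully that the signs $(-1)^{c(L)}$ and $(-1)^{|M|}$ agree under the bijection and that the self-loop case (where $\bQ_C(u)^2 = 1$ makes parity trivial and the weight reduces to $(A_C)_{uu}$) works cleanly.
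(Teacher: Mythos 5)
Your proposal is correct and takes essentially the same route as the paper: apply the Coefficients Theorem to the random matrix, expand the products over arcs into a sum over arc-colorings, and observe that the expectation kills every term except those in which the coloring is constant on each cycle of the linear subgraph, which are exactly the trivial heaps of colored cycles from \Cref{eqn:factor-coeffs}. The only cosmetic difference is that you argue the vanishing via $\pm 1$ parity of vertex incidences, whereas the paper phrases the same cancellation in terms of mean-$0$, variance-$1$ random variables (which also lets the result hold for, e.g., uniform unit-circle signings); both versions are correct, your worries about the signs and the self-loop case are unfounded since $c(L)=|M|$ under the bijection by definition, and the self-loop term $\bQ_C(u)^2=1$ contributes exactly $(A_C)_{uu}$ as required.
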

\begin{remark}
    As will be seen from the proof of \Cref{thm:factor-poly-comes-from-random-edge-signing}, the expectation is unchanged so long as the random diagonal matrices~$\bQ_j$ have entries $\bQ_j[i,i]$ that are independent complex random variables with mean~$0$ and variance~$1$; for example, each could be chosen uniformly at random from the complex unit circle.
\end{remark}
\begin{remark}
    In the monochromatic case of $\atms = 1$, \Cref{thm:factor-poly-comes-from-random-edge-signing} reduces to the fact that the characteristic polynomial of a matrix~$A$ is invariant to the unitary conjugation $A \mapsto Q^{\dagger\!}A Q$.  In the case that $A_1, \dots, A_\atms$ are the edge \atoms for an undirected graph~$G$, \Cref{thm:factor-poly-comes-from-random-edge-signing} reduces to the Godsil--Gutman theorem~\cite[Corollary~2.2]{GG81} that the matching polynomial of~$G$ is the expected characteristic polynomial of a random edge-signing of~$G$ (here we are using \Cref{rem:balanced-is-general-for-forests}).
\end{remark}

\begin{remark} \label{rem:mohan}
    In the special case when $A_1 = A_2 = \cdots = A_r$, the additive characteristic polynomial $\factorpoly{A, \dots, A}{x}$ (with $r$ copies of~$A$) becomes equivalent to the ``$r$-characteristic polynomial'' introduced by Ravichandran~\cite{Rav16} and notated $\chi_r[A]$ therein.\footnote{This equivalence is particularly clear in the second version of the paper~\cite{LR18}, joint with Leake, which was written around the same time as this paper.  See the discussion just preceding Definition~6.2 in~\cite{LR18}.  We thank Mohan Ravichandran for drawing this to our attention.}  That work --- motivated by Anderson's paving formulation of the Kadison--Singer conjecture --- gave several combinatorial/algebraic formulas for $\chi_r[A]$, showed it is real-rooted for any Hermitian~$A$ using the Interlacing Polynomials method, and gave certain bounds on its roots. 
\end{remark}

\begin{proof}[Proof of \Cref{thm:factor-poly-comes-from-random-edge-signing}]
    Write $\bA =  \bQ_1^{\dagger\!} A_1 \bQ_1 + \cdots + \bQ_\atms^{\dagger\!} A_\atms \bQ_\atms$.  We use the ``Coefficients Theorem'' formula for $\charpoly{\bA}{x}$ mentioned in \Cref{eg:charpoly}:
    \begin{equation}    \label{eqn:thats-charpoly}
        \charpoly{\bA}{x} = \sum_{k=0}^n a_k x^{n-k}, \qquad a_k = \sum_{\substack{M \in \THeap(\bA) \\ \length(M) = k}} (-1)^{|M|} w_{\bA}(M).
    \end{equation}
    Here $M$ runs over all trivial heaps of \emph{uncolored} cycles on~$[n]$, and $w_{\bA}(M)$ refers to the (random) weight of such a trivial heap with respect to~$\bA$.  Let us say that a \emph{cycle-coloring} of~$M$ is any $M' \in \THeap(A_1, \dots, A_\atms)$ obtained by choosing a color in~$[c]$ for each cycle in~$M$.  Comparing \Cref{eqn:thats-charpoly} with the definition of $\factorpoly{A_1, \dots, A_\atms}{x}$, we see it suffices to show for each uncolored~$M$ that
    \begin{equation}    \label{eqn:cycle-col}
        \E\bracks*{w_{\bA}(M)} = \sum_{\text{cycle-colorings } M' \text{ of } M} w(M'),
    \end{equation}
    where $w(M')$ above is with respect to $A_1, \dots, A_\atms$, as in \Cref{def:triv-heap-col-cyc}.  Now
    \[
        w_{\bA}(M) = \prod_{\coloredcyc \in M} w_{\bA}(\coloredcyc) = \prod_{\coloredcyc \in M} \prod_{\substack{\text{arcs} \\ e \in \coloredcyc}} w_{\bA}(e) = \prod_{\coloredcyc \in M} \prod_{\substack{\text{arcs} \\ e \in \coloredcyc}} \parens*{A_1[e] \bq_1[e] + \cdots + A_\atms[e] \bq_\atms[e]},
    \]
    where for $e = (i,i')$ we have used the shorthands $A[e] = A[i,i']$ and $\bq_j[e] = \bQ_j[i,i]^* \bQ_j[i',i']$.  Expanding out the above product yields
    \[
        w_{\bA}(M) = \sum_{\substack{\text{arc-colorings} \\ \chi : \{\text{arcs in }M\} \to [\atms]}} \ \prod_{\coloredcyc \in M} \prod_{\substack{\text{arcs} \\ e \in \coloredcyc}} A_{\chi(e)}[e]\bq_{\chi(e)}[e].
    \]
    Consider the expectation of a particular term in the above sum, corresponding to some arc-coloring~$\chi$.
    Using the fact that the random variables $\bQ_j[i,i]$ are independent with mean~$0$ and variance~$1$, the expectation is~$0$ unless the $\bQ_j[i,i]$'s that appear appear in pairs (as $\bQ_j[i,i]^* \bQ_j[i,i]$), in which case it equals $\prod\{A_{\chi(e)}[e] : e \text{ in } M\}$.  This sort of pairing-up occurs if and only if for each cycle $\coloredcyc \in M$, the arc-coloring $\chi$ assigns the same color to each arc in~$\coloredcyc$; i.e., if and only if $\chi$ agrees with some cycle-coloring~$M'$ of~$M$.  In this case, the contribution $\prod\{A_{\chi(e)}[e] : e \text{ in } M\}$ indeed equals $w(M')$.  Thus we have established \Cref{eqn:cycle-col}, completing the proof.
\end{proof}

Once we have \Cref{thm:factor-poly-comes-from-random-edge-signing} in hand, the following fact is easy to prove.
\begin{fact}    \label{fact:factor-factorpoly}
    Suppose $H = A_1 + \cdots + A_\atms$ is a sum graph and there is a way to partition $[\atms]$ into $S_1,\dots, S_t$ such that $\sum_{j\in S_i}A_j$ is a separate connected component for each $j$. Then
    \[
        \factorpoly{A_1,\dots, A_\atms}{x} = \prod_{i = 1}^t \factorpoly{(A_j)_{j\in S_i}}{x}
    \]
\end{fact}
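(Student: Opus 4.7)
The plan is to deduce this factorization directly from \Cref{thm:factor-poly-comes-from-random-edge-signing} together with the standard fact that the characteristic polynomial of a block-diagonal matrix factors across its blocks. Let $n_i$ denote the number of vertices in the $i$-th connected component of $H$. Since the components are vertex-disjoint, we have $\sum_i n_i = n$, and after a suitable vertex relabeling every atom $A_j$ with $j \in S_i$ is block-diagonal with all nonzero entries confined to the $i$-th diagonal block. Accordingly, $\factorpoly{(A_j)_{j \in S_i}}{x}$ is naturally interpreted as the \factorpolynomial\ of those atoms viewed as matrices on their $n_i$-vertex component, so that the degrees on the two sides of the claimed identity match.

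Next I would draw independent random balanced edge-signings $\bQ_1,\dots,\bQ_\atms$ and form $\bA = \sum_{j=1}^\atms \bQ_j^{\dagger} A_j \bQ_j$. Because diagonal conjugation preserves the support of each $A_j$, the matrix $\bA$ is itself block-diagonal, with $i$-th block $\bA_i := \sum_{j \in S_i} \bQ_j^{\dagger} A_j \bQ_j$. Hence $\charpoly{\bA}{x} = \prod_{i=1}^t \charpoly{\bA_i}{x}$. Moreover, the subcollections $\{\bQ_j : j \in S_i\}$ are disjoint across $i$, so the random polynomials $\charpoly{\bA_i}{x}$ are mutually independent.

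Putting it together and applying \Cref{thm:factor-poly-comes-from-random-edge-signing} twice---once globally to $(A_1,\dots,A_\atms)$ on the full vertex set, and once to each subcollection $(A_j)_{j \in S_i}$ restricted to its component---gives
\[
    \factorpoly{A_1,\dots,A_\atms}{x} = \E\bracks*{\charpoly{\bA}{x}} = \prod_{i=1}^t \E\bracks*{\charpoly{\bA_i}{x}} = \prod_{i=1}^t \factorpoly{(A_j)_{j \in S_i}}{x},
\]
where the middle equality uses independence of the $\charpoly{\bA_i}{x}$.

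I do not anticipate any real obstacle; the only point worth checking explicitly is the interpretation issue mentioned above, namely that a trivial heap of colored cycles on $(A_1,\dots,A_\atms)$ decomposes uniquely into trivial heaps supported on each component (since every colored cycle in $\THeap(A_1,\dots,A_\atms)$ lies in a single $\ul{A}_j$ and therefore in a single component). This could alternatively be used to give a purely combinatorial proof directly from the defining sum in \Cref{eqn:factor-coeffs}, but the probabilistic route through \Cref{thm:factor-poly-comes-from-random-edge-signing} is cleaner.
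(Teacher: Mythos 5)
Your proof is correct and takes exactly the route the paper intends: the paper introduces this fact immediately after \Cref{thm:factor-poly-comes-from-random-edge-signing} with the remark that it is "easy to prove" once that theorem is in hand, which is precisely your deduction via block-diagonality of $\bA$ together with independence of the per-component random signings. You also correctly flag the one interpretive point (restricting the atoms $A_j$, $j\in S_i$, to the $n_i$ vertices of their component so degrees match), and your remark that the claim also falls out combinatorially from \Cref{eqn:factor-coeffs}, since every colored cycle lives inside a single component, is a valid alternative.
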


\subsection{Freelike walks}     \label{sec:freelike}
\begin{definition}
    Let $\omega = (u_0, \dots, u_T)$ be a walk on vertex set~$[n]$.  There is a natural way (``loop-erasing'') of decomposing~$\omega$ into a self-avoiding walk~$\eta$, together with a collection of cycles.  We give an abbreviated description of it here; see also Godsil's description of it~\cite[Sec.~6.2]{God93}.  We follow the walk $u_0, u_1, u_2, \dots$ until the first repetition of a vertex; say $u_s = u_t$ with $s < t$.  We call the cycle $(u_s, u_{s+1}, \dots, u_t = u_s)$ thus formed the first \emph{piece} in the walk.  We then delete this piece from~$\omega$ (leaving one occurrence of~$u_s$), and repeat the process, starting again from~$u_0$.  This generates a sequence of pieces (cycles).  We proceed until the walk has no more repeated vertices, at which point the remaining self-avoiding walk (possibly of length~$0$) is termed~$\eta$.
\end{definition}
\begin{remark}
    We will be most interested in closed walks~$\omega$, in which case the self-avoiding walk~$\eta$ indeed degenerates to the initial/terminal point $u_0$ of~$\omega$.
\end{remark}
\begin{definition}
    Given matrices $A_1, \dots, A_\atms$ indexed by~$[n]$, we define a \emph{closed freelike\footnote{Please excuse the pun mixing Voiculescu's ``free'' with Godsil's ``treelike walks''.} walk} to be a closed walk~$\freelikewalk$ on~$[n]$ in which each piece is assigned a color from~$[\atms]$.  Its \emph{weight} $w(\freelikewalk)$ is defined to be the product of~$w(\coloredcyc)$ over all pieces (colored cycles) $\coloredcyc$ in~$\freelikewalk$.  (Recall that if $\coloredcyc = (\gamma, j)$ is a colored cycle, \emph{its} weight is $w_{A_j}(\gamma)$.)  We write $\FrWalk(A_1, \dots, A_\atms)$ for the collection of all closed freelike walks of nonzero weight.
\end{definition}
\begin{remark}
    In the unweighted case, when each $A_j$ is the adjacency matrix of a simple directed graph $G_j$ on common vertex set~$[n]$, an element $\freelikewalk \in \FrWalk(A_1, \dots, A_\atms)$ is a closed walk on~$[n]$ in which each piece has been assigned to an \atom $G_j$ in which it wholly appears.
\end{remark}
\begin{remark}
    In the case when $A_1, \dots, A_\atms$ are the edge \atoms of an undirected graph $G$, an element $\freelikewalk \in \FrWalk(A_1, \dots, A_\atms)$ is equivalent to a closed walk within~$G$ in which each piece has length~$2$; this is precisely the definition from Godsil~\cite{God81} of a closed \emph{treelike} walk in~$G$.
\end{remark}

Consider the monochromatic, unweighted case, when $A \in \C^{n \times n}$ is the adjacency matrix of a simple directed graph~$G$.  A common way to study the roots $\lambda_1, \dots, \lambda_n$ of the characteristic polynomial $\charpoly{A}{x}$ (i.e., the eigenvalues of~$A$) is via the \emph{Trace Method}, which says that the $k$th~\emph{power sum} of these roots, $p_k(\lambda) = \sum_{i=1}^n \lambda_i^k$, is equal to the number of closed walks in~$G$ of length~$k$.  This may be seen as a combinatorial interpretation of the characteristic polynomial of a graph.

Similarly, Godsil gave a combinatorial interpretation of the matching polynomial of an undirected graph~$G$: he showed~\cite[Theorem~3.6(b)]{God81} that the $k$th~power sum of the roots of $\matchpoly{G}{x}$ is equal to the number of closed \emph{treelike} walks in~$G$.

In \Cref{thm:counts-treelike} below, we give a common generalization of these two facts: in the unweighted case, it says that for a sum graph $G = A_1 + \cdots + A_\atms$, the $k$th~power sum of the roots of $\factorpoly{A_1, \dots, A_\atms}{x}$ is equal to the number of closed \emph{freelike} walks in~$G$.  Indeed, we prove this in the general weighted case, where the $n \times n$ matrices~$A_j$ have entries from a commutative ring.  In this case it doesn't make sense to speak of eigenvalues, but we may still recall a sensible interpretation of the ``power sum $p_k(\roots(f))$'' via the theory of generating functions.  If
\[
    f(x) = (x-\lambda_1)(x-\lambda_2) \cdots (x-\lambda_n)
\]
is a general degree-$n$ monic complex polynomial with roots $\lambda_1, \dots, \lambda_n$, then
\begin{align}
    \frac{f'(x)}{f(x)} = \frac{d}{dx} \log f(x) = \sum_{i=1}^n \frac{1}{x-\lambda_i} = x^{-1} \sum_{i=1}^n \frac{1}{1- \lambda_ix^{-1}} &= x^{-1} \sum_{i=1}^n \parens*{1 + \lambda_ix^{-1}  + \lambda_i^2x^{-2}   + \cdots} \nonumber \\
    &= x^{-1}\parens*{n + p_1(\lambda)x^{-1}   +  p_2(\lambda) x^{-2}  + \cdots}, \label{eqn:powersumgen}
\end{align}
at least at the level of formal generating functions.  Thus for any monic polynomial $f(x)$ with coefficients in a commutative ring, the desired interpretation of the $k$th power sum of its roots is
\begin{equation}    \label{eqn:pk}
    p_k = p_k(\roots(f)) = [x^k]\parens*{x^{-1} \frac{f'(x^{-1})}{f(x^{-1})}}.
\end{equation}
Furthermore, let us define $e_k = e_k(\roots(f))$, the ``$k$th elementary symmetric polynomial $f$'s roots'', in the natural way; namely, via $f$'s coefficients, as
\begin{equation}    \label{eqn:ek}
    f(x) = x^n + \sum_{k=1}^n (-1)^{k} e_{k} x^{n-k}.
\end{equation}
Then from \Cref{eqn:pk} and \Cref{eqn:ek} one easily infers \emph{Newton's identities}, which give an alternative, recursive definition for~$p_k(\roots(f))$ in terms of $f$'s coefficients:
\begin{equation}    \label{eqn:newton}
        p_k = (-1)^{k+1} k e_k + \sum_{i=1}^{k-1} (-1)^{i+1} e_{i}p_{k-i}.
\end{equation}

\begin{theorem}                                     \label{thm:counts-treelike}
    Let $A_1, \dots, A_\atms$ be matrices indexed by~$[n]$.  Then writing $\factorpolyplain = \factorpoly{A_1, \dots, A_\atms}{x}$, it holds for all $k \in \N^+$ that
    \begin{equation}    \label{eqn:its-p}
        p_k(\roots(\factorpolyplain)) = \sum_{\substack{\freelikewalk \in \FrWalk(A_1, \dots, A_\atms) \\ \length(\freelikewalk) = k}} w(\freelikewalk).
    \end{equation}
\end{theorem}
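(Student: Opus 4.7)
The plan is to prove \Cref{eqn:its-p} via Viennot's theory of heaps of pieces. Introduce the reversed \factorpolynomial $\hat{\alpha}(y) := \sum_{k=0}^n b_k\, y^k$, where the $b_k$ come from \Cref{eqn:factor-coeffs}; since $\hat{\alpha}(y) = \prod_i (1 - \lambda_i y)$ when $\factorpolyplain(x) = \prod_i(x-\lambda_i)$, the computation in \Cref{eqn:powersumgen} rearranges to $-y\,\hat{\alpha}'(y)/\hat{\alpha}(y) = \sum_{k\geq 1} p_k(\roots(\factorpolyplain))\,y^k$. Writing $\tilde p_k$ for the right-hand side of \Cref{eqn:its-p}, it then suffices to prove
\[
\sum_{k\geq 1} \tilde p_k\, y^k \;=\; -y\,\hat{\alpha}'(y)/\hat{\alpha}(y).
\]

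The main combinatorial setup is Viennot's. Take as pieces the nonzero colored cycles in $\CCyc(A_1,\dots,A_\atms)$, with two pieces commuting iff they are vertex-disjoint. This gives the standard notions of a \emph{heap}, \emph{trivial heap} (matching \Cref{def:triv-heap-col-cyc}), and \emph{pyramid} (a nonempty heap with a unique maximal piece). The key claim I would establish is a length- and weight-preserving bijection, for each $v\in[n]$, between closed freelike walks $\freelikewalk \in \FrWalk(A_1,\dots,A_\atms)$ starting (and ending) at $v$ and pyramids whose maximal piece contains~$v$. Given a closed walk at~$v$, the loop-erasing decomposition described just before \Cref{thm:counts-treelike} produces an ordered sequence of cycles; labeling each cycle with the corresponding color from the freelike walk data and passing to the quotient by commutation of vertex-disjoint pieces yields a pyramid whose maximal piece (the cycle closed last) necessarily passes through~$v$. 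The inverse is the standard Viennot reconstruction, reading a pyramid bottom-up along any compatible linear extension.

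Combining this bijection with the Cartier--Foata lemma (which says $\hat{\alpha}(y)$ is the multiplicative inverse of the generating function of all heaps) and a standard deletion argument splitting heaps by whether they touch $v$ then gives, for each vertex $v$,
\[
\sum_{\substack{P \text{ pyramid} \\ v \in \max(P)}} w(P)\, y^{\length(P)} \;=\; \frac{\hat{\alpha}_{[n]\setminus v}(y)}{\hat{\alpha}(y)} - 1,
\]
where $\hat{\alpha}_{[n]\setminus v}$ denotes the reversed \factorpolynomial of the atoms restricted to vertex set $[n]\setminus\{v\}$. Summing over $v$ and using that each trivial heap of length $k$ is avoided by exactly $n-k$ vertices --- so that $\sum_v \hat{\alpha}_{[n]\setminus v}(y) = n\,\hat{\alpha}(y) - y\,\hat{\alpha}'(y)$ --- yields the displayed generating-function identity and hence the theorem.

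The main obstacle I anticipate is the loop-erasing bijection. In Godsil's matching-polynomial setting the pieces are only length-$2$ ``backtracks'' and the heap partial order is nearly trivial; here the pieces are arbitrary colored cycles that may share vertices in complex configurations. One must verify that the ambiguity in the loop-erasing order (vertex-disjoint cycles closed at different times in the walk can be swapped) matches exactly the commutation relation defining heaps of colored cycles, and that the color labels transfer cleanly in both directions. Once this is in place, the Cartier--Foata step and the final $\sum_v$ computation are routine.
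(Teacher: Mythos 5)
Your argument is correct, and it lines up with the paper's \emph{second} proof of this theorem, the one via Viennot's theory of heaps, but with a worthwhile variation in the middle step. The paper applies $x\tfrac{d}{dx}\log$ to the Inversion Lemma and then invokes Viennot's Logarithmic Lemma wholesale to identify the result with the pyramid generating function. You instead essentially inline a proof of that lemma: by the \emph{generalized} Inversion Lemma, the generating function of heaps all of whose maximal pieces contain $v$ equals $\hat\alpha_{[n]\setminus v}/\hat\alpha$; since the maximal pieces of any heap are pairwise independent, two of them cannot both contain $v$, so such heaps are exactly the empty heap together with pyramids whose maximal piece contains $v$, giving your displayed identity; summing over $v$ and using $\sum_v\hat\alpha_{[n]\setminus v}=n\hat\alpha - y\,\hat\alpha'$ finishes. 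This trades the less commonly cited Logarithmic Lemma for the better-documented generalized Cartier--Foata identity plus one extra generating-function computation, which is a reasonable exchange. (One small imprecision: what you call ``splitting heaps by whether they touch $v$'' is really the unique factorization of every heap $H$ as $H_1\cdot H_2$ with $H_1$ a pyramid through $v$ or empty and $H_2$ a heap avoiding $v$; a naive partition into $v$-touching and $v$-avoiding heaps would not give the stated identity.) Both your route and the paper's heap-based proof hinge on the same weight-preserving bijection between pyramids with a distinguished vertex in the maximal piece and nontrivial closed freelike walks, and the paper also only sketches it, so you are right to flag it as the delicate point. Be aware, though, that the paper's \emph{primary} proof of \Cref{thm:counts-treelike} is entirely different and fully self-contained: it constructs a sign-reversing involution $\psi$ on pairs $(\freelikewalk,M)\in\FrWalk\times\THeap$ with $\length(\freelikewalk)+\length(M)=k$ that directly establishes Newton's identities
\[
p_k = (-1)^{k+1} k\, e_k + \sum_{i=1}^{k-1} (-1)^{i+1} e_i\, p_{k-i}
\]
for the claimed quantity $\widetilde{p}_k$, with no appeal to heaps, pyramids, or the loop-erasing bijection. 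If you want to avoid the bijection subtleties altogether, that is the route to take.
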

We will give two proofs of \Cref{thm:counts-treelike}.\footnote{in fact, probably the two proofs are more or less the same}  On one hand, one might say that \Cref{thm:counts-treelike} follows almost immediately from the ``logarithmic lemma'' in Viennot's theory of ``heaps of pieces'' (partially published in \cite{Vie86} and described in more detail in the YouTube series \cite{Vie17youtube}).  On the other hand, this theory is not completely published, and we find it worthwhile to give the following direct proof for the sake of being self-contained:
\begin{proof}[Proof of \Cref{thm:counts-treelike}]
    Let us write
    \[
        \wt{p}_k = \sum_{\substack{\freelikewalk \in \FrWalk(A_1, \dots, A_\atms) \\ \length(\freelikewalk) = k}} w(\freelikewalk).
    \]
    We will show that $\wt{p}_k$ satisfies the recursion in \Cref{eqn:newton} vis-a-vis the coefficients of~$\factorpolyplain$ (cf.~\Cref{eqn:factor-coeffs,eqn:ek}); i.e., vis-a-vis
    \[
        e_k = e_k(\roots(\factorpolyplain)) = (-1)^k \sum_{\substack{M \in \THeap(A_1, \dots, A_\atms) \\ \length(M) = k}} (-1)^{|M|} w(M).
    \]
    It will then follow that $p_k(\roots(\factorpolyplain)) = \wt{p}_k$. 

    Define $\Psi_k$ to be the set of pairs $(\freelikewalk, M) \in \FrWalk(A_1, \dots, A_\atms) \times \THeap(A_1, \dots, A_\atms)$ satisfying $\length(\freelikewalk) + \length(M) = k$.  \emph{Exception:} in the case of $\length(\freelikewalk) = 0$ and $\length(M) = k$, we only include those pairs for which $\freelikewalk$'s single vertex appears in~$M$.

    We define an involution $\psi$ on $\Psi_k$ as follows.  Given $(\freelikewalk, M) \in \Psi_k$, let $\coloredcyc_1$ be the first piece in~$\freelikewalk$ (or if $\length(\freelikewalk) = 0$, define $\coloredcyc_1 = \freelikewalk$).  There are now two cases.
    \paragraph{Case 1:} \emph{The initial part of~$\freelikewalk$, up to and including the traversal of~$\coloredcyc_1$, visits a vertex appearing in~$M$.}  (In the exceptional case of~$\length(\freelikewalk) = 0$, this case always occurs.)  In this case, let $v$ be the earliest such vertex and let $\coloredcyc$ be the (unique) piece in~$M$ containing it.  Since~$v$ is earliest, no other vertex of~$\coloredcyc$ occurs in~$\freelikewalk$ prior to this~$v$; hence we may form a new freelike walk $\freelikewalk^+$ by inserting~$\coloredcyc$ into~$\freelikewalk$ just after the first occurrence of~$v$.  We define $\psi(\freelikewalk,M) = (\omega^+, M \setminus \coloredcyc)$.

    \paragraph{Case 2:} \emph{The initial part of~$\freelikewalk$, up to and including the traversal of~$\coloredcyc_1$, is vertex-disjoint from~$M$.}  In this case, we let $\freelikewalk^-$ be the freelike walk formed from~$\omega$ by deleting its first piece~$\coloredcyc_1$, and we define $\psi(\freelikewalk, M) = (\freelikewalk^-, M \sqcup \coloredcyc_1)$.  The union~$M \sqcup \coloredcyc_1$ is indeed vertex-disjoint, since we are in Case~2.  (Also, in the exceptional case that $\length(\freelikewalk^-) = 0$, we indeed have that $\freelikewalk^-$'s single vertex appears in $M \sqcup \coloredcyc_1$, since it's in~$\coloredcyc_1$.)\\

    We now verify that~$\psi$ is indeed an involution.  Suppose first that $(\freelikewalk,M) \in \Psi_k$ falls into Case~1.  Using the terminology from that case,  note that~$v$ occurs in~$\freelikewalk$ earlier than the completion of~$\coloredcyc_1$; thus $\coloredcyc$ occurs in~$\freelikewalk^+$ earlier than~$\coloredcyc_1$ and hence it is the first piece in walk~$\freelikewalk^+$.  Now it is not hard to see that $\psi(\freelikewalk,M) = (\freelikewalk^+,M \setminus \coloredcyc)$ will fall into Case~2 (with the role of $\coloredcyc_1$ being played by~$\coloredcyc$), and $\psi(\freelikewalk^+,M \setminus \freelikewalk)$ will again be~$(\freelikewalk,M)$.

    On the other hand, suppose $(\freelikewalk,M) \in \Psi_k$ falls into Case~2.  Let $v$ be the vertex at which $\freelikewalk$ enters~$\coloredcyc_1$ for the first time.  Since we are in Case~2, no piece in~$M$ touches a vertex occurring earlier than~$v$ in~$\freelikewalk$ (and nor does any vertex in~$\coloredcyc_1$ other than~$v$ occur earlier).  Thus in considering $\psi(\freelikewalk,M) = (\freelikewalk^-, M\sqcup \coloredcyc_1)$, we see that this pair will fall into Case~1: vertex~$v$ (which is in~$\coloredcyc_1$) will appear in~$\freelikewalk^-$ prior to the completion of its first piece, and $\psi(\freelikewalk^-, M \sqcup \coloredcyc_1)$ will indeed be formed by reinserting~$\coloredcyc_1$ into~$\freelikewalk^-$ at the first occurrence of~$v$.

    With~$\psi$ in hand, let us define the \emph{weight} of pair $(\freelikewalk,M)$ to be $W(\freelikewalk,M) = w(\freelikewalk)(-1)^{|M|}w(M)$.  It is easy to see from its definition that $W(\psi(\freelikewalk,M)) = -W(\freelikewalk,M)$.  Thus since $\psi$ is an involution,
    \[
        \sum_{(\freelikewalk,M) \in \Psi_k} W(\freelikewalk,M) = 0,
    \]
    since the summands cancel in pairs.  Expanding the left-hand side in terms of its definitions, we get
    \begin{align*}
        0 &=  k \cdot \sum_{\substack{M \in \THeap(A_1, \dots, A_\atms) \\
        \length(M)  = k}}  (-1)^{|M|} w(M)  + \sum_{\substack{\freelikewalk \in \FrWalk(A_1, \dots, A_\atms) \,\ M \in \THeap(A_1, \dots, A_\atms) \\
        \length(\freelikewalk) + \length(M)  = k, \ \length(\freelikewalk)\neq 0}} w(\freelikewalk)   (-1)^{|M|}  w(M)    \\
        &= (-1)^k k e_k + \parens*{\wt{p}_k + \sum_{i=1}^{k-1} (-1)^{i} e_i \wt{p}_{k-i}},
    \end{align*}
    which indeed shows that $\wt{p}_k$ satisfies \Cref{eqn:newton} (Newton's identities), completing the proof.
\end{proof}

\subsubsection{Heaps of pieces}
We now show how \Cref{thm:counts-treelike} also follows from Viennot's theory of ``heaps of pieces'' (see also \cite{Kra06}, \cite{CF06} and \cite{GR17}).  We instantiate the theory as follows:


\begin{definition}  \label{def:heap}
    Given $A_1, \dots, A_\atms$ as before, a \emph{heap of colored cycles}~$H$ is a finite collection of ``pieces'' satisfying certain conditions.  Each ``piece'' is a pair $(\coloredcyc, h)$, where $\coloredcyc \in \CCyc(A_1, \dots, A_\atms)$ is a colored cycle and  $h \in \N$ is the ``height'' or ``level''.  Two colored cycles $\coloredcyc_1, \coloredcyc_2$ are said to be ``dependent'' (written $\coloredcyc_1 \calC \coloredcyc_2$) if they have a vertex in common.  The heap conditions are the following:
    \begin{itemize}
        \item If two pieces are at the same height, they are independent. More precisely, for $(\coloredcyc_1, h_1), (\coloredcyc_2, h_2) \in H$, if $\coloredcyc_1 \calC \coloredcyc_2$ then $h_1 \neq h_2$.
        \item If a piece is not at ground level, then it is supported by another piece.  More precisely, if $(\coloredcyc, h) \in H$ with $h > 0$ then there exists $(\coloredcyc', h-1) \in H$ with $\coloredcyc \calC \coloredcyc'$.
    \end{itemize}
    We write $\Heap(A_1, \dots, A_\atms)$ for the collection of all heaps of colored cycles. (Note that trivial heaps are ones in which all heights are~$0$.)  Finally, if $H \in \Heap(A_1, \dots, A_\atms)$, we define its ``valuation'' to be
    \[
        v(H) = w(H)x^{\length(H)}, \qquad \text{where } \length(H) = \sum_{(\coloredcyc,h) \in H} \length(\coloredcyc);
    \]
    here $x$ is an indeterminate.
\end{definition}
In the heaps of pieces setup, Viennot (see \cite{Vie86}) showed the following generating function identity:
\paragraph{Inversion Lemma.}  $\displaystyle
    \sum_{H \in \Heap(A_1, \dots, A_\atms)} v(H) = \frac{1}{\displaystyle \sum_{M \in \THeap(A_1, \dots, A_\atms)} (-1)^{|M|} v(M)}.$\\

From \Cref{eqn:factor-coeffs}, we can easily recognize the right-hand side above as $\parens*{x^n \factorpolyplain(1/x)}^{-1}$.  If we now apply the operator $x \frac{d}{dx} \log$ to both sides, \Cref{eqn:powersumgen} lets us quickly deduce the generating function
\begin{equation}    \label{eqn:powersumgen2}
    x \frac{d}{dx} \log \parens*{ \sum_{H \in \Heap(A_1, \dots, A_\atms)} v(H) } = p_1 x + p_2 x^2 + p_3 x^3 + \cdots.
\end{equation}
Now we may apply the other main identity in Viennot's theory \cite[Chapter 2d]{Vie17youtube}:
\paragraph{Logarithmic Lemma.}  $\displaystyle x \frac{d}{dx} \log \parens*{ \sum_{H \in \Heap(A_1, \dots, A_\atms)} v(H) } = \sum_{P \in \Pyr(A_1, \dots, A_\atms)} v(P).$\\

Here $\Pyr(A_1, \dots, A_\atms)$ denotes all \emph{pyramids of colored cycles}; that is, heaps of colored cycles having a unique \emph{maximal} piece (meaning a unique piece supporting no other pieces) along with a distinguished vertex $v$ contained in the maximal piece.  Finally, an important aspect of the theory, that can be found in \cite[Chapter 3a, 3b]{Vie17youtube} is that there is bijection between pyramids of cycles and closed walks (of positive length). In our case, this is a bijection between pyramids  of colored cycles, $P$, and closed walks in which each piece is colored --- in other words, freelike walks $\freelikewalk$ (of positive length).\footnote{Very briefly:  Given a pyramid with a distinguished starting vertex in its maximal piece, one obtains the freelike walk by always following the ``lowest'' arc in the pyramid, emanating from the current vertex, that has not yet been followed. Conversely, given the freelike walk with distinguished starting vertex, one forms the pyramid by ``dropping in'' the colored pieces as they are encountered in the loop-erasing decomposition.}  And further, the bijection preserves the set of colored cycles used; hence, $v(P) = w(\freelikewalk) x^{\length(\freelikewalk)}$ under this bijection.  Combining this bijection with the Logarithmic Lemma and \Cref{eqn:powersumgen2} we conclude
\[
    \sum_{\substack{\freelikewalk \in \FrWalk(A_1, \dots, A_\atms) \\ \length(\freelikewalk) \neq 0} } w(\freelikewalk) x^{\length(\freelikewalk)} = p_1 x + p_2 x^2 + p_3 x^3 + \cdots,
\]
which is the generating function form of \Cref{thm:counts-treelike}.

\subsection{Root bounds for the \factorpolynomial}

Given the expected-characteristic-polynomial formula of \Cref{thm:factor-poly-comes-from-random-edge-signing}, we may immediately apply a theorem of Hall, Puder, and Sawin~\cite[Thm.~4.2]{HPS18} (see also~\cite[Thm.~3.3]{MSS15d}) to deduce the below~\Cref{thm:its-real-rooted}.  This theorem generalizes the fact that the characteristic polynomial of a Hermitian matrix is real-rooted, and the theorem~\cite{HL72} that the matching polynomial of an undirected graph is real-rooted.
\begin{theorem}                                     \label{thm:its-real-rooted}
    Let $A_1, \dots, A_\atms \in \C^{n \times n}$ be Hermitian.  Then the \factorpolynomial $\factorpoly{A_1, \dots, A_\atms}{x}$ is real-rooted.
\end{theorem}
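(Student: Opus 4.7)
The plan is to combine Theorem \ref{thm:factor-poly-comes-from-random-edge-signing} with the interlacing-family machinery of Marcus--Spielman--Srivastava, as extended to random sign-conjugations by Hall--Puder--Sawin. By Theorem \ref{thm:factor-poly-comes-from-random-edge-signing}, $\factorpoly{A_1,\dots,A_\atms}{x}$ equals $\E\bracks{\charpoly{\mathbf{M}}{x}}$, where $\mathbf{M} = \bQ_1^\dagger A_1 \bQ_1 + \cdots + \bQ_\atms^\dagger A_\atms \bQ_\atms$. Since each $A_j$ is Hermitian and each $\bQ_j$ is a real diagonal (hence self-adjoint unitary) matrix, $\mathbf{M}$ is Hermitian for every outcome, so every polynomial inside the expectation is already real-rooted. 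The task therefore reduces to showing that \emph{this particular} convex combination of real-rooted polynomials is itself real-rooted -- a property which, of course, generally fails for convex combinations.

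I would establish this by building the standard interlacing-family tree. Index the $\atms \cdot n$ independent sign variables arbitrarily, and for each partial assignment $\sigma$ to a prefix of them, let $p_\sigma(x)$ be the conditional expectation of $\charpoly{\mathbf{M}}{x}$ given those signs. The leaves $p_\sigma$ are characteristic polynomials of Hermitian matrices and hence real-rooted. The key claim is that this tree forms an interlacing family in the sense of MSS: at every internal node $\sigma$, the two children $p_{\sigma,+}$ and $p_{\sigma,-}$ (corresponding to fixing the next sign to $+1$ or $-1$) admit a common interlacer. Given this claim, a routine inductive argument ascending the tree shows that every $p_\sigma$ is real-rooted, and in particular the root polynomial $p_\varnothing = \factorpoly{A_1,\dots,A_\atms}{x}$ is.

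The main obstacle is verifying the common-interlacer condition at each internal node. Flipping one sign $\bQ_j[i,i]$ amounts to conjugating $\mathbf{M}$ by a diagonal $\pm 1$ matrix that differs from the identity only on coordinate $i$; this is a rank-$\leq 2$ Hermitian perturbation of $\mathbf{M}$. Averaging over this single sign gives $\tfrac{1}{2}(p_{\sigma,+} + p_{\sigma,-})$, so what is required is that for any Hermitian $M$ and any index~$i$, the characteristic polynomials of $M$ and of $D_i M D_i$ (where $D_i$ flips the sign of coordinate $i$) share a common interlacer. This is exactly the content of Theorem~4.2 of \cite{HPS18} (which itself generalizes Theorem~3.3 of \cite{MSS15d}), proved via the Cauchy interlacing theorem together with Obreschkoff's theorem: one checks that every nonnegative combination $\lambda \charpoly{M}{x} + (1-\lambda) \charpoly{D_i M D_i}{x}$ is real-rooted by exhibiting an explicit interlacing polynomial (essentially, the characteristic polynomial of the $(n-1) \times (n-1)$ principal minor of $M$ obtained by deleting row and column~$i$, which interlaces both). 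Citing this theorem directly completes the proof; the remainder of the argument is the bookkeeping above.
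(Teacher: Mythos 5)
The overall route is the same as the paper's: express $\factorpoly{A_1,\dots,A_\atms}{x}$ as the expected characteristic polynomial of a random sign-conjugated Hermitian matrix via \Cref{thm:factor-poly-comes-from-random-edge-signing}, then invoke Hall--Puder--Sawin's Theorem~4.2 (the paper does this either directly or through \Cref{thm:lifts-interlacing-family}, whose proof cites that same theorem). So your citation structure is correct, and the theorem you lean on does carry the required weight.

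However, your attempt to unpack what the HPS theorem does contains a genuine error. You claim that flipping a single sign $\bQ_j[i,i]$ ``amounts to conjugating $\mathbf{M}$ by a diagonal $\pm 1$ matrix $D_i$.'' This is false: the flip only conjugates the $j$-th summand $\bQ_j^\dagger A_j \bQ_j$, leaving the other $\atms - 1$ summands untouched. If the flip \emph{did} conjugate the whole matrix $\mathbf{M}$ by a unitary $D_i$, then the two branches would have \emph{identical} characteristic polynomials, the expectation would reduce to $\charpoly{\mathbf{M}_0}{x}$, and \Cref{thm:factor-poly-comes-from-random-edge-signing} would be saying the additive characteristic polynomial equals the characteristic polynomial of a fixed signing of the sum graph --- which is false. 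Consequently the formulation ``$\charpoly{M}{x}$ and $\charpoly{D_iMD_i}{x}$ share a common interlacer'' that you attribute to HPS Theorem~4.2 is a trivial statement (they are equal) and is not what that theorem says. The correct picture is that the two branches are $M_0$ and $M_1 = M_0 + \Delta$ with $\Delta$ a rank-$\leq 2$ Hermitian matrix supported on the $i$-th row and column. Interestingly, your proposed explicit interlacer (the characteristic polynomial of the principal submatrix obtained by deleting row/column $i$) does still work at the leaves, precisely because $M_0$ and $M_1$ share that $(n-1)\times(n-1)$ principal minor, so Cauchy interlacing applies to both. The deeper gap is that the interlacing-family framework requires common interlacers at \emph{every} internal node of the tree, not only at the leaves; at an intermediate node the two children are conditional expectations over many remaining sign bits, not characteristic polynomials of fixed Hermitian matrices, so Cauchy interlacing gives you nothing directly. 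Establishing real-rootedness of all such mixed conditional expectations in one shot is exactly what HPS Theorem~4.2 (generalizing \cite{MSS15d}, Theorem~3.3) accomplishes, which is why the paper simply cites it rather than inducting bit-by-bit. Your proof is correct insofar as it defers to that theorem, but the gloss you give on its content is not.
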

This real-rootedness property can be seen as a corollary of \Cref{thm:lifts-interlacing-family}, which is discussed in \Cref{sec:additive-lift-interlacing}.

We would now like to generalize the theorem~\cite{HL72} that for a $\Delta$-regular graph, the roots of the matching polynomial have magnitude at most~$2\sqrt{\Delta-1}$; and more generally, the theorem~\cite{God81,MSS15a} that the roots of $\matchpoly{G}{x}$ have magnitude at most $\specrad(\UCT(G))$.
\begin{theorem}                                     \label{thm:root-bound}
    Let $H = A_1 + \cdots + A_\atms$ be a connected sum graph and  $X = A_1 \addprod \cdots \addprod A_\atms$ be the \emph{\additiveproduct} of atoms $A_1, \cdots, A_\atms$ on vertex set~$[n]$, as in \Cref{def:additive-product}.  Then the roots of $\factorpoly{A_1, \dots, A_\atms}{x}$ lie in the interval $[-\specrad(X), \specrad(X)]$.
\end{theorem}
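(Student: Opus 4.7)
The plan is to combine three ingredients: (i) the real-rootedness of $\factorpoly{A_1, \dots, A_\atms}{x}$ guaranteed by \Cref{thm:its-real-rooted}, (ii) the freelike-walk interpretation of the root power sums given by \Cref{thm:counts-treelike}, and (iii) the closed-walk formula for $\specrad(X)$ from \Cref{def:specrad} together with the pointwise bound $c_{uu}^{(t)} \leq \specrad(X)^t$ of \Cref{fact:specrad-basic}. If I can show that $p_k(\roots(\factorpolyplain)) \leq \vtcs \cdot \specrad(X)^k$ for every even integer~$k$, then because the roots $\lambda_1, \dots, \lambda_\vtcs$ are real, $\max_i |\lambda_i|^k \leq p_k \leq \vtcs \cdot \specrad(X)^k$, and taking $k$-th roots with $k \to \infty$ through even values yields the desired bound $\max_i |\lambda_i| \leq \specrad(X)$.

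The heart of the proof will be a weight-preserving bijection
\[
\Phi_{u,\tilde u} : \{\text{closed walks of length } k \text{ in } X \text{ based at } \tilde u\} \longleftrightarrow \{\freelikewalk \in \FrWalk(A_1,\dots,A_\atms) : \length(\freelikewalk) = k,\ \freelikewalk \text{ starts at } u\},
\]
defined for each $u \in [\vtcs]$ and each choice of lift $\tilde u \in V(X)$ of $u$ under the covering map $\pi : X \to H$ furnished by \Cref{prop:addprod-covers-sum-graph}. Because we are in the plain case, every closed walk in $X$ has weight $1$ and every closed freelike walk of nonzero weight has weight $1$, so once $\Phi_{u,\tilde u}$ is in place, summing over $u$ and invoking \Cref{thm:counts-treelike} and \Cref{fact:specrad-basic} yields
\[
p_k(\roots(\factorpolyplain)) \;=\; \sum_{u \in [\vtcs]} c_{\tilde u\tilde u}^{(k)}(X) \;\leq\; \vtcs \cdot \specrad(X)^k,
\]
which is exactly the power-sum bound needed.

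Constructing $\Phi_{u,\tilde u}$ is where the geometry of the additive product enters. In the forward direction, I would project a closed walk $\omega$ in $X$ via $\pi$ to a closed walk in $H$ from $u$ to $u$; each traversed edge of $X$ inherits a color from the unique atom-copy it lies in, read off the string representation in \Cref{def:additive-product}. I then need to argue that the loop-erasure decomposition of $\pi(\omega)$ yields pieces each lying entirely inside a single atom-copy of $X$, so that the edge-coloring descends to a well-defined piece-coloring of the projected walk. This hinges on the key structural property of $X$: distinct atom-copies meet in at most a single vertex and fit together in a tree-like pattern, so any closed sub-excursion of $\omega$ must start and end in the same atom-copy and hence remain within it. In the reverse direction, a freelike walk $\freelikewalk$ at $u$ lifts uniquely, step by step, to a walk in $X$ starting at $\tilde u$: at each step the piece-color specifies which atom-copy of $X$ we are currently inside, which together with the endpoint of the edge in $H$ determines the lifted target vertex.

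The main obstacle will be carrying out the loop-erasure/atom-excursion correspondence rigorously: specifically, showing that the pieces cut out by the loop-erasing procedure on $\pi(\omega)$ coincide exactly with the first-visit excursions of $\omega$ into atom-copies of $X$, and that these two operations are mutually inverse. I expect to prove this by induction on the length of $\omega$, using the string encoding of vertices in $X$, which records precisely the nested sequence of atom-copies entered from the basepoint; truncating a string corresponds to backing out of a deepest atom-copy, and this step should be shown to match the completion of a cycle in the loop-erasure. Handling the edge cases --- e.g.\ when $\freelikewalk$ degenerates to a single colored cycle at $u$, or when the loop-erasure leaves a nontrivial self-avoiding remainder (which cannot happen for closed walks but must be checked) --- is where the bookkeeping will require the most care.
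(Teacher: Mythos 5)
Your high-level strategy matches the paper exactly: use \Cref{thm:its-real-rooted} for real-rootedness, \Cref{thm:counts-treelike} to express $p_{2k}$ as a count of length-$2k$ freelike walks, then relate that count to closed walks in $X$ and appeal to \Cref{fact:specrad-basic} before taking $2k$-th roots. The paper's own proof elides the correspondence between freelike walks in $H$ and closed walks in $X$ (it simply says ``Thus, the number of closed walks\dots is at least $\frac{1}{n}\lambda^{2k}$''), so it is reasonable that you spend your effort there. However, the correspondence you propose to establish is wrong in one direction, and this is a genuine gap.

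Specifically, you claim a \emph{bijection} $\Phi_{u,\tilde u}$ between closed walks in $X$ based at $\tilde u$ and nonzero-weight closed freelike walks at $u$, and you hope to prove the forward direction by showing that the loop-erasure of the projected walk $\pi(\omega)$ has pieces each lying in a single atom. This is false. Take $H = K_3$ with its three edge atoms, so $X$ is the universal cover tree $\ITree_2$, i.e.\ the bi-infinite path $\dots a_{-1} a_0 a_1 a_2 \dots$. The closed walk $a_0 a_1 a_2 a_3 a_2 a_1 a_0$ in $X$ projects to $1\,2\,3\,1\,3\,2\,1$ in $H$. Loop-erasure of the \emph{projected} walk yields the two $3$-cycles $(1,2,3,1)$ and $(1,3,2,1)$, neither of which lies in any single (single-edge) atom, so this projected walk is not a nonzero-weight freelike walk at all. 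The issue is that loop-erasure does not commute with the covering projection: the loop-erasure of $\omega$ in $X$ (which here produces three back-and-forth $2$-cycles, each atom-local) and the loop-erasure of $\pi(\omega)$ in $H$ produce different decompositions. So there is no bijection, and the equality $p_k(\roots(\factorpolyplain)) = \sum_u c^{(k)}_{\tilde u\tilde u}(X)$ in your display is false.

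Fortunately the argument only needs the other, correct direction: an \emph{injection} from nonzero-weight closed freelike walks at $u$ into closed walks at $\tilde u$ in $X$, given by the unique path-lift under the covering $X\to H$. Proving that this lift of a nonzero-weight closed freelike walk is again closed (which you can do by the induction on length you sketch, using that each piece of the loop-erasure is a self-avoiding cycle within one atom $A_j$, hence lifts to a closed cycle within the corresponding $A_j$-copy of $X$ based at the current lifted vertex) gives $p_{2k} \leq \sum_u c^{(2k)}_{\tilde u\tilde u}(X) \leq \vtcs\cdot\specrad(X)^{2k}$, which is all the conclusion requires. So: replace ``bijection'' with ``injection'', drop the forward direction entirely, and replace the equality in your key display with an inequality; then your argument aligns with (and fills the gap in) the paper's.
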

\begin{proof}
    Let $\lambda > 0$ denote the largest magnitude among the $n$ roots of $\factorpoly{A_1, \dots, A_\atms}{x}$.  From \Cref{thm:counts-treelike} it follows that the number of closed freelike walks $\upomega \in \FrWalk(A_1, \dots, A_\atms)$ of length~$2k$ is at least~$\lambda^{2k}$.  Hence there exists a vertex $i \in [n]$ such that the number of closed length-$2k$ freelike walks that start and end at~$i$ is at least $\frac{1}{n} \lambda^{2k}$. Let $u$ be a vertex that maps to $i$ in a covering map from vertices from $X$ to $[n]$.  Thus, the number of closed walks starting and ending at $u$ in $X$ of length $2k$ is at least $\frac{1}{n}\lambda^{2k}$.

    In other words (using the notation from \Cref{def:specrad}), $c^{(2k)}_{uu} \geq \frac{1}{n} \lambda^{2k}$ for every $k \in \N$.  It follows immediately from \Cref{fact:specrad-basic} that $\specrad(X) \geq (\frac{1}{n})^{1/2k} \lambda$ for every~$k$, and hence $\lambda \leq \specrad(X)$ as desired.
\end{proof}

\begin{remark}  \label{rem:root-bound-factor}
    Suppose $H = A_1 + \cdots + A_\atms$ is a disconnected graph where each of the $t$ connected components partition $[c]$ into $S_1,\dots, S_t$ such that the $i$th connected component is equal to $\sum_{j\in S_i} A_j$. Let $X_i$ denote the additive product of atoms in $S_i$. Combining \Cref{fact:factor-factorpoly} and \Cref{thm:root-bound} gives us that $\factorpoly{A_1,\dots,A_\atms}{x}$ has all its roots in $[-\max_{1\le i \le t}\specrad(X_i), \max_{1\le i \le t}\specrad(X_i)]$.
\end{remark}

\begin{remark}
    Godsil showed in \cite{God81} that the matching polynomial of a graph $G$ divided the characteristic polynomial of a certain subgraph of the universal cover of $G$, known as the \emph{path tree} of $G$, from which the desired root bounds on the matching polynomial of $G$ follow.  However, an analogous divisibility result for the additive characteristic polynomial $\factorpoly{A_1,\dots,A_\atms}{x}$ of sum graph $H = A_1 + \cdots + A_c$ seems elusive, which motivates studying the moments of $\factorpoly{A_1,\dots,A_\atms}{x}$ via other combinatorial means.
\end{remark}

\section{Interlacing Families}      \label{sec:interlacing-families}

In this section, we give background and facts about interlacing families which are proved in \cite{MSS15a,MSS15b}.

\begin{definition}
Let $p$ and $q$ be real rooted polynomials with $\deg(p)=\deg(q)+1$. Denote the $i$-th largest root of $p$ and $q$ with $\lambda_i$ and $\mu_i$ respectively. We say $q$ \emph{interlaces} $p$ if $\lambda_i\ge\mu_i\ge\lambda_{i+1}$ for $1\leq i\leq\deg(q)$.
\end{definition}

\begin{definition}
We say that a family of polynomials $\calF$ has a \emph{common interlacing} if there is some polynomial $r$ that interlaces every polynomial in $\calF$.
\end{definition}

\begin{theorem}     \label{thm:convex-interlace-real-rooted}
A family of polynomials $\calF$ has a common interlacing if and only if $\E_{\bp\sim\calD}[\bp]$ is real-rooted for any distribution $\calD$ over $\calF$.
\end{theorem}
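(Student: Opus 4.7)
For the forward direction, I would fix a common interlacer $r$ of degree $n-1$ with roots $r_1 \geq r_2 \geq \cdots \geq r_{n-1}$, and assume (WLOG, by rescaling) that every $p \in \calF$ has positive leading coefficient and degree $n$. The key observation is that $\mathrm{sgn}(p(r_j)) = (-1)^j$ for every $p \in \calF$: this is because $r_j$ is sandwiched between the $j$th and $(j+1)$st roots of $p$, pinning down the sign of $p(r_j)$ via alternation. The crucial point is that this sign depends only on $j$ and not on the particular $p$. Consequently, for any distribution $\calD$ on $\calF$, the polynomial $q = \E_{\bp \sim \calD}[\bp]$ also satisfies $\mathrm{sgn}(q(r_j)) = (-1)^j$. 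Combined with the behavior of $q$ at $\pm\infty$ (determined by its positive leading coefficient), the intermediate value theorem then produces a real root of $q$ in each of the $n$ intervals $(r_1, \infty), (r_2, r_1), \ldots, (r_{n-1}, r_{n-2}), (-\infty, r_{n-1})$; since $q$ has degree $n$, these are all its roots and $q$ is real-rooted.

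For the reverse direction, I would first reduce to the two-polynomial case. Letting $\lambda^{(i)}_1 \geq \cdots \geq \lambda^{(i)}_n$ denote the roots of $p_i$, a common interlacer exists iff for every $j \in [n-1]$ the intersection $\bigcap_i [\lambda^{(i)}_{j+1}, \lambda^{(i)}_j]$ is nonempty. Since these are closed intervals on the real line, by the one-dimensional Helly property the joint intersection is nonempty iff every pairwise intersection is nonempty, i.e., iff every pair $(p_i, p_{i'})$ has a common interlacer. Because any convex combination of $p_i$ and $p_{i'}$ is itself an element of some distribution over $\calF$, the hypothesis passes to pairs, and it suffices to prove the implication for $|\calF| = 2$.

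For two polynomials $p, q$ of degree $n$ with positive leading coefficient, I would argue by contradiction: if $p$ and $q$ have no common interlacer, there must exist indices $i, i'$ and $j$ such that the interval inequality fails, say $\lambda^{(p)}_{j+1} > \lambda^{(q)}_j$. Consider the continuous family $q_\alpha = \alpha p + (1-\alpha) q$, $\alpha \in [0,1]$, whose $n$ roots depend continuously on $\alpha$ (with multiplicity). At the endpoints the root multisets are those of $q$ and $p$; tracking them as $\alpha$ moves from $0$ to $1$, the incompatibility of the two root orderings forces two root trajectories to cross, which requires them to coincide at some intermediate $\alpha^*$. A generic real double root of a real polynomial with real coefficients breaks apart into a complex-conjugate pair under small perturbation, which would contradict the assumed real-rootedness of $q_\alpha$ for $\alpha$ near $\alpha^*$.

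\textbf{Main obstacle.} The forward direction and the Helly-style reduction to pairs are elementary. The technical heart is the two-polynomial reverse direction: making the ``continuous trajectory + forced crossing $\Rightarrow$ complex pair'' argument fully rigorous requires either a careful discriminant computation (showing $\mathrm{disc}(q_\alpha)$ changes sign) or a direct invocation of a classical result of Dedieu/Obreshkoff characterizing when all convex combinations of two real-rooted polynomials remain real-rooted. I expect the cleanest write-up to black-box this classical pairwise statement and focus novelty on the easy IVT direction and the Helly reduction.
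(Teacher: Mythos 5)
The paper itself does not prove this theorem; \Cref{sec:interlacing-families} opens by stating that all facts in the section ``are proved in \cite{MSS15a,MSS15b}.'' So there is no in-paper argument to compare against, and I'll evaluate your sketch on its own terms.

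Your forward direction and your Helly reduction are both correct and are the standard moves. The one place the forward direction needs a touch more care is that the interlacing inequalities in the paper's definition are non-strict, so the right statement is $(-1)^j p(r_j) \ge 0$ rather than an exact sign; you then conclude $(-1)^j q(r_j) \ge 0$ for $q = \E[\bp]$, and the intermediate-value counting has to be phrased so that a vanishing $q(r_j)$ still yields a root in the relevant closed interval. This is routine. The reduction to pairs via the one-dimensional Helly property of the intervals $[\lambda^{(i)}_{j+1}, \lambda^{(i)}_j]$ is clean and valid, and you are right that a distribution supported on two members of $\calF$ transfers the real-rootedness hypothesis to that pair.

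The genuine gap is in your pairwise reverse direction. The heuristic ``forced crossing $\Rightarrow$ double root $\Rightarrow$ splits into a complex-conjugate pair'' is not a proof and, as stated, is not even correct mechanism. Two issues. First, the sorted root functions $\lambda_1(\alpha) \ge \cdots \ge \lambda_n(\alpha)$ never cross by construction; what you are implicitly invoking is a choice of analytic (unsorted) branches, which need not exist globally, and ``incompatibility of orderings forces a crossing'' is not a theorem about the sorted roots. Second, even when a real double root does occur at some $\alpha^*$, it need not break into a complex pair: the family $x^2 - \alpha$ has a real double root at $\alpha = 0$ that splits into two \emph{real} roots for $\alpha > 0$. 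Whether the split is real or complex depends on the sign of the discriminant on each side, and nothing in the crossing picture pins that down. The classical pairwise statement (Fell, Obreschkoff, Dedieu, or Chudnovsky--Seymour, which is also what \cite{MSS15a} invokes) is proved by a different mechanism: one fixes a threshold $c$ at which $p(c)$ and $q(c)$ have the same nonzero sign, observes that then $q_\alpha(c) \neq 0$ for all $\alpha \in [0,1]$, so the integer-valued function ``number of roots of $q_\alpha$ exceeding $c$'' is continuous and hence constant in $\alpha$; one then shows that absence of a common interlacer produces a threshold $c$ where this count differs between $p$ and $q$. Your ``main obstacle'' paragraph is honest that this is where the work is, and black-boxing the pairwise result is a legitimate way to complete the write-up — but you should replace the discriminant/complex-pair intuition with the root-counting argument, since the former does not lead anywhere as written.
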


\begin{definition}
In our context, we call a distribution $\calD$ over binary strings of length $M$ a \emph{product distribution} if it is distributed as $\bb_1\bb_2\dots\bb_M$ for independent $\bb_i$ where each $\bb_i\sim\mathrm{Bernoulli}(p_i)$.
\end{definition}

\begin{definition}
Let $\calF$ be a family of real rooted polynomials indexed by $\{0,1\}^m$. We call $\calF$ an \emph{interlacing family} if the polynomial given by $\E_{(\bs_1,\bs_2,\ldots,\bs_m)\sim\calD} \left[p_{s_1,s_2,\ldots,s_m}\right]$ is real rooted for all product distributions $\calD$ over $\{0,1\}^m$.
\end{definition}

\begin{remark}
    For the sake of intuition, we find it fruitful to give an equivalent definition of an interlacing family of polynomials that was given in \cite{MSS15a}. Consider a binary tree of depth $m$ where a vertex $v$ is labeled by a binary string representing the path from the root to $v$. The leaves, thus, are labeled with $m$-bit binary strings. Now, suppose we place a polynomial on each leaf, and recursively fill in vertices in the tree with polynomials by choosing $p_v$ we place at $v$ as an arbitrary convex combination of $p_{u}$ and $p_{u'}$ at the children of $v$. Then, the family $\{p_\ell\}_{\ell\in\mathsf{Leaves}}$ is an interlacing family if every pair of polynomials that share a common parent in the constructed tree has a common interlacing.
\end{remark}

\begin{theorem}     \label{thm:root-bound-interlace}
Suppose $\calF$ is an interlacing family, then for any product distribution $\mathcal{D}$ over $\{0,1\}^m$, there is $s_1^*,\ldots,s_m^*$ such that
\[\maxroot\left(\E_{(s_1,\ldots,s_m)\sim\mathcal{D}}\left[p_{s_1,\ldots,s_m}\right]\right)\geq \maxroot(p_{s_1^*,\ldots,s_m^*})\]
\end{theorem}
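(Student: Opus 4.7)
The plan is to prove this by induction on $m$, reducing to a single key lemma about convex combinations of two polynomials that share a common interlacer. For the base case $m=0$ the family consists of a single polynomial and the statement is vacuous, so the entire content is in the inductive step.

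The key lemma I will establish is: if $p_0, p_1$ are real-rooted monic polynomials of the same degree with a common interlacer $r$, and $\alpha \in [0,1]$, then
\[
\maxroot(\alpha p_0 + (1-\alpha) p_1) \;\geq\; \min\bigl(\maxroot(p_0),\, \maxroot(p_1)\bigr).
\]
To prove this, let $\lambda_b = \maxroot(p_b)$ and assume without loss of generality that $\lambda_0 \leq \lambda_1$. Write $q = \alpha p_0 + (1-\alpha) p_1$; by \Cref{thm:convex-interlace-real-rooted} applied to the family $\{p_0, p_1\}$, the polynomial $q$ is real-rooted. Evaluating at $\lambda_0$ gives $q(\lambda_0) = (1-\alpha) p_1(\lambda_0)$, since $p_0(\lambda_0) = 0$. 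The second-largest root $\mu_2$ of $p_1$ satisfies $\mu_2 \leq \maxroot(r) \leq \lambda_0$ by the common-interlacer property, so $\lambda_0 \in [\mu_2, \lambda_1]$, an interval on which the monic polynomial $p_1$ is nonpositive. Hence $q(\lambda_0) \leq 0$, while $q(x) \to +\infty$ as $x \to \infty$, so the intermediate value theorem gives a root of $q$ in $[\lambda_0, \infty)$, proving $\maxroot(q) \geq \lambda_0$.

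For the inductive step, I split the product distribution $\calD$ on $\{0,1\}^m$ according to the first coordinate. Setting $q_b = \E_{(\bs_2,\ldots,\bs_m) \sim \calD_{2:m}}[p_{b,\bs_2,\ldots,\bs_m}]$ for $b \in \{0,1\}$, we have $\E_\calD[\bp] = \calD_1(0)\, q_0 + \calD_1(1)\, q_1$. Both $q_0$ and $q_1$ are expected polynomials over product distributions, hence real-rooted by the interlacing family property; moreover every convex combination of $q_0$ and $q_1$ is also the expectation over a product distribution, and is therefore real-rooted, which by \Cref{thm:convex-interlace-real-rooted} guarantees that $q_0$ and $q_1$ share a common interlacer. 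Applying the key lemma, there is a choice $s_1^* \in \{0,1\}$ with $\maxroot(\E_\calD[\bp]) \geq \maxroot(q_{s_1^*})$. The subfamily $\{p_{s_1^*,s_2,\ldots,s_m}\}_{s_2,\ldots,s_m \in \{0,1\}}$ is itself an interlacing family (any product distribution over $\{0,1\}^{m-1}$ lifts to a product distribution over $\{0,1\}^m$ by placing all mass on $s_1^*$ in the first coordinate), so the inductive hypothesis produces $s_2^*,\ldots,s_m^*$ with $\maxroot(q_{s_1^*}) \geq \maxroot(p_{s_1^*,\ldots,s_m^*})$, chaining to the desired bound.

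The main obstacle is verifying the key lemma cleanly, specifically the sign analysis at $x = \lambda_0$: one must use the common interlacer to ensure that the second-largest root of $p_1$ does not exceed $\lambda_0$, which is what forces $p_1(\lambda_0) \leq 0$. The rest of the argument is essentially bookkeeping — the fact that the subfamily obtained by fixing the first bit is still an interlacing family, and the observation that convex combinations along the recursion tree correspond to product distributions, both follow directly from the definitions.
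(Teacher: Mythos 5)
Your proof is correct and follows the standard Marcus--Spielman--Srivastava argument; the paper itself does not prove this theorem but cites it as a fact established in \cite{MSS15a,MSS15b}, and your induction down the tree together with the convex-combination lemma is precisely that argument. One small thing worth flagging: the sign analysis in your key lemma, where you conclude from $q(\lambda_0)\le 0$ and the intermediate value theorem that $q$ has a root in $[\lambda_0,\infty)$, uses that $q$ is monic (or at least has positive leading coefficient); this is not literally part of the paper's definition of ``interlacing family,'' but it holds automatically in the application since all polynomials in question are characteristic polynomials of a fixed size, and in fact the MSS framework always assumes a common positive leading coefficient, so this is a hypothesis you should state rather than a gap.
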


\section{Random additive lifts and the interlacing property}   \label{sec:additive-lift-interlacing}

\begin{definition}
    Let $V$ be a vertex set. We define
    \[
        \BitToSwap_{ij}(b) :=
        \begin{cases}
            \tau_{ij} &\text{if $b=1$}\\
            1 &\text{if $b=0$}
        \end{cases}
    \]
    where $\tau_{ij}$ is the transposition that swaps $i$ and $j$, and $1$ is the identity permutation.
    For $x$ in $\{0,1\}^{\nlift \choose 2}$ indexed by $ij$ with $1\leq i<j \leq \nlift$, define
    \[
        \StringToPerm(x) :=
            \prod_{i=1}^{n-1}\prod_{j=i+1}^n \BitToSwap_{ij}(x_{ij})
    \]
    And finally for $y$ in $\LiftEnc_{\nlift,V}:=\left(\{0,1\}^{\nlift\choose 2}\right)^V$ let $\StringToPotential(y)$ be the potential $Q$ such that $Q(v) = \StringToPerm(y_v)$.
\end{definition}

\begin{definition}
    Given a sum graph $H = A_1 + \cdots + A_\atms$ on vertex set $V$, $z\in \LiftEnc_{\nlift,V}^\atms$ and $\pi$ a representation of $\symm{\nlift}$, following \Cref{def:additive-lift} we use the notation
    \[
        \LiftAdj{z}{\pi}(H) := \sum_{j=1}^\atms \LiftAdj{\StringToPotential(z_j)}{\pi}(A_j)
    \]
\end{definition}

\begin{definition}
    Following \cite{HPS18}, we call a $\C^{\repdim \times \repdim}$-valued random matrix~$\bT$ a \emph{rank-$1$ random variable} if $\bT$ is distributed as $U^\dagger \diag(\bomega, 1, \dots, 1) V$ for some fixed $U,V \in \Unitaries{\repdim}$ and some random variable~$\bomega$ taking values in the complex unit circle.
\end{definition}
The following facts are also simple:
\begin{fact}                                        \label{fact:stoch-refl}
    Let $\pi \in \{\permrep, \stdrep, \sgnrep\}$ be a representation of $\symm{\nlift}$ and let $\tau \in \symm{\nlift}$ be a transposition.  Then $\pi(\tau)$ is unitarily conjugate to $\diag(-1, 1, 1, \dots, 1)$.  Thus if $\bb$ is a Bernoulli random variable, then $\pi(\btau)$ is a rank-1 random variable.
\end{fact}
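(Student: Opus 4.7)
The plan is to handle each of the three representations in turn by explicitly diagonalizing $\pi(\tau)$, and then combine this with the fact that the identity is diagonal in every basis to obtain the rank-$1$ conclusion.

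First I would dispose of $\sgnrep$, which is one-dimensional: every transposition is odd, so $\sgnrep(\tau) = -1 = \diag(-1)$, which already matches the form $\diag(-1,1,\dots,1)$ with no trailing $1$'s. For $\permrep$ and $\tau = (i\,j)$, the matrix $\permrep(\tau) \in \C^{\nlift \times \nlift}$ is the real symmetric permutation matrix that swaps $\ket{i}$ and $\ket{j}$, and its eigendecomposition is immediate: $(\ket{i}-\ket{j})/\sqrt{2}$ is a $-1$-eigenvector, while $(\ket{i}+\ket{j})/\sqrt{2}$ together with $\ket{k}$ for each $k \ne i,j$ form an orthonormal basis of $+1$-eigenvectors. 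The spectral theorem then yields a unitary $U$ with $\permrep(\tau) = U^\dagger \diag(-1,1,\dots,1)\,U$. For $\stdrep$, I would use that it is the restriction of $\permrep$ to the orthogonal complement of the all-ones vector, which is fixed by every permutation; removing this single copy of the trivial representation deletes exactly one $+1$ from the spectrum, leaving one $-1$ and $\nlift - 2$ copies of $+1$, so the same kind of unitary diagonalization gives $\stdrep(\tau) = U^\dagger \diag(-1,1,\dots,1)\,U$ in dimension $\nlift-1$.

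For the rank-$1$ conclusion, I would interpret $\btau$ via the earlier $\BitToSwap$ notation, namely $\btau = \tau$ when $\bb = 1$ and $\btau = 1$ when $\bb = 0$. Using the unitary $U$ found above together with the trivial identity $\Id = U^\dagger \diag(1,1,\dots,1)\,U$, both cases can be written simultaneously as
\[
    \pi(\btau) = U^\dagger \diag\bigl((-1)^{\bb},\,1,\,\dots,\,1\bigr)\,U.
\]
Setting $V := U$ and $\bomega := (-1)^{\bb} \in \{-1,+1\}$ (which of course lies on the complex unit circle) matches the definition of a rank-$1$ random variable exactly.

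I do not anticipate any genuine obstacle; the statement is pure elementary spectral analysis of a single transposition in the three standard representations of $\symm{\nlift}$. The only point worth flagging is that the \emph{same} unitary $U$ must diagonalize both $\Id$ and $\pi(\tau)$, but this is automatic since $\Id$ is diagonal in every orthonormal basis; thus any $U$ that works for $\pi(\tau)$ works for both.
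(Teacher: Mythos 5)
The paper states this as one of several ``simple'' facts and does not supply a proof, so there is no paper argument to compare against; your write-up provides the natural elementary verification. It is correct: the spectral analysis of $\pi(\tau)$ in each of the three representations is right (for $\sgnrep$ trivially, for $\permrep$ via the explicit $\pm 1$-eigenvectors of a transposition, and for $\stdrep$ by removing the single trivial $+1$ eigenvector $\vec{1}$ from $\permrep$), and the rank-$1$ conclusion follows exactly as you write by taking $V=U$ and $\bomega = (-1)^{\bb}$ in the definition.
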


\begin{fact}        \label{fact:prod-refl}
    Subsequently, if $\bx$ is drawn from a product distribution over $\{0,1\}^{\nlift\choose 2}$, then $\pi(\StringToPerm(\bx))$ is the product of ${\nlift \choose 2}$ independent rank-1 random variables.
\end{fact}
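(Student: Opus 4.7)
The proof plan is to unpack the two definitions in play and then apply Fact \ref{fact:stoch-refl} coordinate-by-coordinate. First I would invoke the definition of $\StringToPerm$ together with the fact that $\pi$ is a group homomorphism to write
\[
    \pi(\StringToPerm(\bx)) = \prod_{i=1}^{\nlift-1}\prod_{j=i+1}^{\nlift} \pi(\BitToSwap_{ij}(\bx_{ij})),
\]
which immediately exhibits the desired product structure with one factor for each of the ${\nlift \choose 2}$ coordinates of $\bx$.

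Next I would check that each factor is a rank-$1$ random variable. The key observation is that $\BitToSwap_{ij}(\bx_{ij})$ is either the transposition $\tau_{ij}$ (when $\bx_{ij}=1$) or the identity permutation (when $\bx_{ij}=0$); Fact \ref{fact:stoch-refl} provides a fixed unitary $W_{ij}$ with $\pi(\tau_{ij}) = W_{ij}^\dagger \diag(-1,1,\dots,1) W_{ij}$, while trivially $\pi(1) = W_{ij}^\dagger \diag(1,1,\dots,1) W_{ij}$. Setting $\bomega_{ij} = 1 - 2\bx_{ij}$ (which takes values in $\{\pm 1\}$, hence on the complex unit circle), we obtain
\[
    \pi(\BitToSwap_{ij}(\bx_{ij})) = W_{ij}^\dagger \diag(\bomega_{ij},1,\dots,1) W_{ij},
\]
which matches the rank-$1$ random variable form with $U = V = W_{ij}$.

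Finally, because $\bx$ is drawn from a product distribution its coordinates $\bx_{ij}$ are mutually independent, so the derived random variables $\bomega_{ij}$, and therefore the factors $\pi(\BitToSwap_{ij}(\bx_{ij}))$, are also mutually independent, completing the claim. I do not anticipate any real obstacle here: it is pure bookkeeping once Fact \ref{fact:stoch-refl} is in hand and the homomorphism property of $\pi$ has been used to split the product.
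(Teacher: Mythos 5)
Your proof is correct and is exactly the natural argument the paper leaves implicit (it groups this under ``the following facts are also simple'' with no written proof): apply the homomorphism property of $\pi$ to split the product, realize each factor $\pi(\BitToSwap_{ij}(\bx_{ij}))$ as $W_{ij}^\dagger\diag(\bomega_{ij},1,\dots,1)W_{ij}$ with $\bomega_{ij}=1-2\bx_{ij}\in\{\pm1\}$ via \Cref{fact:stoch-refl}, and note that independence of the $\bx_{ij}$'s passes to the factors since each depends on a distinct coordinate.
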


\begin{fact}                                        \label{fact:perm-from-transpo}
    There is a product distribution $\calD$ over $\{0,1\}^{\nlift\choose 2}$ such that $\StringToPerm(\bz)$ for $\bz$ drawn from $\calD$ is uniform in $\symm{\nlift}$ (see, e.g., \cite[Remark~4.7]{HPS18}).
\end{fact}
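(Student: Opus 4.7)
My plan is to exhibit an explicit product distribution and verify the claim by induction on~$\nlift$. Take $\calD$ to be the product distribution on $\{0,1\}^{\binom{\nlift}{2}}$ with $\Pr[\bx_{ij}=1] = 1/(j-i+1)$ independently for each $1 \le i < j \le \nlift$. The base case $\nlift = 1$ is trivial (no bits, identity permutation). For the inductive step, factor
\[
    \StringToPerm(\bx) \;=\; \sigma_1 \cdot \sigma', \qquad \sigma_1 := \prod_{j=2}^{\nlift} \BitToSwap_{1j}(\bx_{1j}), \qquad \sigma' := \prod_{i=2}^{\nlift-1} \prod_{j=i+1}^{\nlift} \BitToSwap_{ij}(\bx_{ij}),
\]
and note that $\sigma_1$ and $\sigma'$ are independent random permutations since they depend on disjoint sets of bits.

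Every transposition appearing in $\sigma'$ has both indices in $\{2,\ldots,\nlift\}$, so $\sigma' \in H := \mathrm{Stab}_{\symm{\nlift}}(1)$. Relabeling $\{2,\ldots,\nlift\} \to \{1,\ldots,\nlift-1\}$ via $k \mapsto k-1$ sends $\BitToSwap_{ij}$ to $\BitToSwap_{i-1,j-1}$, and the bias $1/(j-i+1) = 1/((j-1)-(i-1)+1)$ matches the prescribed biases for $\symm{\nlift-1}$. The inductive hypothesis then gives that $\sigma'$ is uniformly distributed on $H \cong \symm{\nlift-1}$.

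Next I will show that $\sigma_1(1)$ is uniform on $[\nlift]$ by tracing the orbit of~$1$ through the right-to-left application of the factors $\BitToSwap_{1,\nlift}, \BitToSwap_{1,\nlift-1}, \ldots, \BitToSwap_{1,2}$ of $\sigma_1$. The running value starts at~$1$; scanning $k$ from $\nlift$ downward, whenever $\bx_{1,k}$ is the first~$1$ encountered, the value jumps from~$1$ to~$k$, after which every remaining factor $\BitToSwap_{1,k'}$ with $k' < k$ fixes~$k$. Hence $\sigma_1(1) = k$ for $k > 1$ iff $\bx_{1,k} = 1$ and $\bx_{1,k'} = 0$ for all $k' > k$, while $\sigma_1(1) = 1$ iff all $\bx_{1,k} = 0$. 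With $\Pr[\bx_{1,k}=1] = 1/k$, the telescoping identity $\frac{1}{k}\prod_{k' = k+1}^{\nlift} \frac{k'-1}{k'} = \frac{1}{\nlift}$ (and an analogous identity for $k=1$) yields $\Pr[\sigma_1(1) = k] = 1/\nlift$ for every $k \in [\nlift]$.

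To conclude, the $\nlift$ left cosets of $H$ in $\symm{\nlift}$ are indexed by the image of~$1$, so the previous step says $\sigma_1$ lies in each coset $\pi H$ with probability $1/\nlift$. Combined with the independence of $\sigma_1$ and $\sigma'$ and the inductive uniformity of $\sigma'$ on $H$, this gives for any $\pi \in \symm{\nlift}$,
\[
    \Pr[\sigma_1 \sigma' = \pi] \;=\; \Pr[\sigma_1 \in \pi H]\cdot \frac{1}{|H|} \;=\; \frac{1}{\nlift}\cdot\frac{1}{(\nlift-1)!} \;=\; \frac{1}{\nlift!},
\]
so $\StringToPerm(\bx)$ is uniform on $\symm{\nlift}$. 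The only thing to get right is the order of composition in the third step; otherwise this is a clean induction with no real obstacle.
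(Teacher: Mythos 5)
Your proof is correct, and it nicely fills in a gap: the paper does not prove this fact but merely cites \cite[Remark~4.7]{HPS18}, so there is no paper proof to compare against. Your argument is the standard self-contained one — a biased-coin realization of the Fisher–Yates shuffle, packaged as an induction on the leading block of swaps involving index~$1$. The factorization $\StringToPerm(\bx)=\sigma_1\sigma'$ respects the lexicographic order in the defining product, and $\sigma_1$, $\sigma'$ are indeed independent (disjoint bits). The relabeling step is clean: the bias $1/(j-i+1)$ is shift-invariant, so after sending $\{2,\dots,\nlift\}\to\{1,\dots,\nlift-1\}$ the induced product distribution is exactly the one prescribed for $\symm{\nlift-1}$, and $\sigma'$ is uniform on $\mathrm{Stab}(1)$ by induction.

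You are right that the composition convention is the one delicate point. Under the convention $(\sigma\tau)(x)=\sigma(\tau(x))$ — so the rightmost factor $\BitToSwap_{1\nlift}$ acts first — the orbit of~$1$ freezes at the \emph{largest} $k$ with $\bx_{1k}=1$, and the telescoping
\[
\Pr[\sigma_1(1)=k]=\frac{1}{k}\prod_{k'=k+1}^{\nlift}\frac{k'-1}{k'}=\frac{1}{\nlift}
\]
gives exact uniformity; had you traced left-to-right the same biases would \emph{not} give a uniform $\sigma_1(1)$, so your flag is well-placed. The final coset computation is also correct: left cosets of $H=\mathrm{Stab}(1)$ are indexed by the image of~$1$, and summing $\Pr[\sigma_1=g]\Pr[\sigma'=g^{-1}\pi]$ over $g\in\pi H$ uses only uniformity of $\sigma'$ on~$H$ together with $\Pr[\sigma_1\in\pi H]=1/\nlift$, not any uniformity of $\sigma_1$ itself (which would be false — $\sigma_1$ is supported on only $2^{\nlift-1}$ permutations). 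In short: a correct and complete proof of a fact the paper only cites.
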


Let $\charpoly{M}{x} = \det(x\Id - M)$ denote the characteristic polynomial, in indeterminate~$x$, of matrix~$M$.
\begin{theorem}         \label{thm:lifts-interlacing-family}
    Let $H = A_1 + \cdots + A_\atms$ be a sum graph on vertex set~$V = [\vtcs]$, and let $\pi \in \{\sgnrep, \stdrep, \permrep\}$ be a representation of $\symm{\nlift}$.
    Then $\left\{\charpoly{\LiftAdj{z}{\pi}(H)}{x}\right\}_{z\in \LiftEnc_{\nlift,V}^\atms}$ is an interlacing family.
\end{theorem}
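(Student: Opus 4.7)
By the definition of an interlacing family, it suffices to show that for every product distribution $\calD$ on $\LiftEnc_{\nlift,V}^\atms$, the polynomial $\E_{\bz \sim \calD}[\charpoly{\LiftAdj{\bz}{\pi}(H)}{x}]$ is real-rooted. The plan is to reduce to the Hall--Puder--Sawin (HPS) interlacing framework, which applies to matrices built from products of rank-$1$ random unitary perturbations.

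I would first unpack the structure: by \Cref{def:liftAdj},
\[
    \LiftAdj{\bz}{\pi}(H) = \sum_{j=1}^\atms \pi^{\boxtimes V}(\StringToPotential(\bz_j))^\dagger \cdot \Adj_\repdim(A_j) \cdot \pi^{\boxtimes V}(\StringToPotential(\bz_j)),
\]
where the conjugator $\pi^{\boxtimes V}(\StringToPotential(\bz_j))$ is block-diagonal in $v \in V$ with $v$-block $\pi(\StringToPerm(\bz_{j,v}))$. By \Cref{fact:prod-refl}, each such block is a product of $\binom{\nlift}{2}$ independent rank-$1$ random variables. Thus $\LiftAdj{\bz}{\pi}(H)$ depends on its $\atms \cdot \vtcs \cdot \binom{\nlift}{2}$ underlying Bernoulli bits through independent rank-$1$ unitary conjugations of fixed Hermitian matrices --- precisely the setting analyzed by Hall--Puder--Sawin in~\cite{HPS18}.

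Next I would establish the interlacing-family property by the standard binary-decision-tree argument from \cite{MSS15a}: at each internal node, I need to show that the two conditional expected characteristic polynomials $p^{(0)}$ (for the current bit $=0$) and $p^{(1)}$ (for the current bit $=1$) have a common interlacer. Conditioning on all previously fixed bits and taking expectations over the remaining bits, both $p^{(0)}$ and $p^{(1)}$ are convex combinations of characteristic polynomials of matrices that differ only by inserting or removing a single rank-$1$ reflection $\pi(\tau_{ik})$ (see \Cref{fact:stoch-refl}) at a specific position within one $v$-block of one conjugator. A direct computation shows such pairs of matrices differ by a Hermitian update of rank at most $2$ with very specific structure, and the rank-$1$ interlacing lemma at the core of~\cite[\S 4]{HPS18} then yields the required common interlacer. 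Iterating up the tree and invoking \Cref{thm:convex-interlace-real-rooted} --- together with the fact that common-interlacer pairs persist under convex combinations --- completes the induction.

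The main obstacle is the single-bit common-interlacer step, which requires the full HPS rank-$1$ perturbation machinery. For $\pi = \sgnrep$ (so $\nlift = 2$), this reduces to the classical MSS $2$-lift edge-signing argument. For $\pi = \permrep$ it is essentially the main content of~\cite{HPS18} and must be invoked in full generality, since the rank-$1$ reflections act inside a tensor factor rather than on the full ambient space. Finally, the case $\pi = \stdrep$ follows from the permutation case by dividing out the $\bz$-independent trivial-subrepresentation factor in $\charpoly{\LiftAdj{\bz}{\permrep}(H)}{x}$ (cf.~\Cref{rem:spectrum-of-lift}), which preserves both real-rootedness and common interlacers.
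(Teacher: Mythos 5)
Your proposal is correct and ultimately rests on the same foundation as the paper's proof, namely the Hall--Puder--Sawin rank-$1$ framework. However, you take a more circuitous route. After rewriting $\LiftAdj{\bz}{\pi}(H)$ as a sum of conjugations by products of independent rank-$1$ random variables (exactly as the paper does), the paper simply observes that this places the matrix in the form addressed by \cite[Theorem~4.2]{HPS18} and invokes that theorem to conclude real-rootedness directly; by the paper's definition of an interlacing family (real-rootedness of $\E_{\bz\sim\calD}[\cdot]$ for every product distribution $\calD$), the proof is then finished. You instead re-derive the binary-decision-tree structure from \cite{MSS15a}, argue common interlacers node-by-node, and then appeal to the HPS rank-$1$ lemma anyway --- effectively re-proving HPS Theorem~4.2 rather than citing it. Two minor cautions with that unfolding: the phrase ``Hermitian update of rank at most $2$ with very specific structure'' is an imprecise way to describe why the common interlacer exists --- generic rank-$2$ Hermitian perturbations do \emph{not} yield common interlacing, and the argument really hinges on the specific product-of-rank-$1$-unitaries structure that HPS exploits, which you do acknowledge but do not make precise. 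Also, treating $\pi=\stdrep$ as a separate case reduced from $\permrep$ is unnecessary: since $\stdrep$ sends transpositions to rank-$1$ reflections (\Cref{fact:stoch-refl}), it is handled directly by HPS in exactly the same way as the other two representations.
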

\begin{proof}
    It suffices to prove that $\E_{\bz\sim\calD}\left[\charpoly{\LiftAdj{\bz}{\pi}(H)}{x}\right]$ is real-rooted for any product distribution $\calD$ over $\LiftEnc_{\nlift,V}^\atms$. We have
    \begin{equation} \label{eqn:big-lift}
        \LiftAdj{\bz}{\pi}(H) = \sum_{j=1}^\atms \pi^{\boxtimes V}(\StringToPotential(\bz_j))^\dagger \cdot \Adj_\repdim(A_j) \cdot \pi^{\boxtimes V}(\StringToPotential(\bz_j))
    \end{equation}
    where we can express each $\StringToPotential(\bz_j)$ as
    \[
        (\StringToPerm(\bz_{j,1}), 1, \dots, 1) \cdot (1, \StringToPerm(\bz_{j,2}), \dots, 1) \cdot \cdots \cdot (1, 1, \dots, \StringToPerm(\bz_{j,\vtcs})).
    \]

    Denoting the $i$-th term of the above product with $\brho_{j, i}$, \Cref{eqn:big-lift} can be reexpressed as
    \begin{align*}
        \LiftAdj{\bz}{\pi}(H) &= \sum_{j=1}^\atms
            \pi^{\boxtimes V}(\brho_{j, 1} \cdots \brho_{j, \vtcs})^\dagger \cdot \Adj_\repdim(A_j) \cdot \pi^{\boxtimes V}(\brho_{j,1} \cdots \brho_{j, \vtcs}) \\
            &= \sum_{j=1}^\atms
            \pi^{\boxtimes V}(\brho_{j, \vtcs})^\dagger \cdots \pi^{\boxtimes V}(\brho_{j,1})^\dagger \cdot \Adj_\repdim(A_j) \cdot \pi^{\boxtimes V}(\brho_{j, \vtcs}) \cdots \pi^{\boxtimes V}(\brho_{j, \vtcs}).
    \end{align*}
    Now each $\pi^{\boxtimes V}(\brho_{j,k})$ is block-diagonal, with all blocks identity except for a single block that is $\pi(\StringToPerm(\bz_{j,k}))$.  By \Cref{fact:prod-refl}, the exceptional block is in fact the product of ${\nlift \choose 2}$ independent rank-$1$ random variables; thus $\pi^{\boxtimes V}(\brho_{j,k})$ can also be written as a product $\bT_{j,k,1}\cdots\bT_{j,k,{\nlift\choose 2}}$ of rank-$1$ random variables~$\bT_{j,k,t}$. So
    \[
        \LiftAdj{\bz}{\pi}(H) = \sum_{j=1}^\atms
           \bT_{j,\vtcs,{\nlift\choose 2}}^\dagger \cdots \bT_{j,1,1}^\dagger \cdot \Adj_\repdim(A_j) \cdot \bT_{j,1,1} \cdots \bT_{j,\vtcs,{\nlift\choose 2}},
    \]
    and real-rootedness of $\E_{\bz\sim\calD}\left[\charpoly{\LiftAdj{\bz}{\pi}(H)}{x}\right]$ is now a consequence of \cite[Theorem 4.2]{HPS18}.
\end{proof}

\section{Root Bounds for random additive $\nlift$-lifts}
\label{sec:minors-root-bounds}

Let $\Gamma$ be a finite group, $\pi$ be a representation of $\Gamma$, $A_1,\dots,A_\atms$ be Hermitian matrices in $\C^{nd\times nd}$, and let $\bQ_1,\dots,\bQ_\atms$ be independent and uniformly random $\Gamma$-potentials on $[n]$. Our object of study this section is $\E[\charpoly{\bcalA}{x}]$ where
\begin{align} \label{eq:random-lift-represented}
    \bcalA := \sum_{j=1}^c \pi^{\boxtimes V}(\bQ_j)^\dagger\cdot A_j \cdot \pi^{\boxtimes V}(\bQ_j)
\end{align}

Recall that in \Cref{sec:factor-poly}, we established root bounds on the \factorpolynomial, which by \Cref{thm:factor-poly-comes-from-random-edge-signing} is equal to $\E[\charpoly{\bcalA}{x}]$ when $\Gamma = S_2$ and $\pi = \stdrep$. In this section, we extend those same root bounds to further $(\Gamma,\pi)$ pairs, specifically pairs satisfying ``Property ($\calP 1$)'' that is defined in \Cref{def:Property-P1}.

\begin{definition}
    Let $A \in \C^{m \times n}$ be a matrix and let $0 \leq k \leq \min\{m,n\}$.  The \emph{$k$th compound matrix} of~$A$ is the matrix $\calC_k(A) \in \C^{\binom{m}{k} \times \binom{n}{k}}$ whose $(I,J)$ entry (for $|I|=|J|=k$) is the minor of $A$ indexed by row-set~$I$ and column-set~$J$, i.e., $\det(A[I,J])$.  (The rows and columns of $\calC_k(A)$ are considered to be ordered lexicographically.)
\end{definition}
We will use several elementary properties of compound matrices (see, e.g.,~\cite[Chapter~V]{Ait44} and \cite[Chapter 2, Lemma 1.2]{God93}), with Cauchy--Binet being the most important.\footnote{Indeed, \Cref{itm:dagger,itm:diag} below are trivial, \Cref{itm:conj,itm:unitary} follows easily from these given Cauchy--Binet, and \Cref{itm:charpoly} then follows (at least for diagonalizable~$A$) by establishing that $\tr \calC_k(A)$ is the $k$th elementary symmetric polynomial of~$A$'s eigenvalues.}
\begin{fact}                                        \label{fact:compounds}
    The following hold for all matrices of appropriate shape:
    \begin{enumerate}
        \item \emph{(Generalized Cauchy--Binet.)} $\calC_k(AB) = \calC_k(A) \calC_k(B)$.
        \item \label{itm:dagger} $\calC_k(A^\dagger) = \calC_k(A)^\dagger$.
        \item \label{itm:diag} If $A = \diag(a_1, \dots, a_n)$ is diagonal, then $\calC_k(A)$ is diagonal, with $\calC_k(A)_{J,J} = \prod_{i \in J} a_j$.  In particular, $\calC_k(\Id) = \Id$.
        \item \label{itm:conj} If $Q = \diag(q_1, \dots, q_n)$ is diagonal, then $\calC_k(Q^{\dagger\!} A Q)_{I,J} = \calC_k(Q)^*_{I,I} \cdot \calC_k(A)_{I,J} \cdot \calC_k(Q)_{J,J} = \calC_k(A)_{I,J} \cdot \prod_{i \in I} q_i^* \cdot \prod_{j \in J} q_j$.
        \item \label{itm:unitary} If $A$ is unitary then so too is $\calC_k(A)$.
        \item \label{itm:charpoly} $\charpoly{A}{x} = \sum_{k=0}^n (-1)^k (\tr \calC_k(A)) x^{n-k}$.
    \end{enumerate}
\end{fact}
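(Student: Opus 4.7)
The plan is to derive all six properties from the generalized Cauchy--Binet identity of item 1, which is really the only part with combinatorial content; items 2 through 6 then follow by short algebra from item 1 together with the definition of $\calC_k$.

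First I would prove Cauchy--Binet. The $(I,J)$-entry of $\calC_k(AB)$ is $\det((AB)[I,J])$, where the $j$th column of $(AB)[I,J]$ equals $\sum_{\ell=1}^{n} A[I,\ell]\, B[\ell,j]$. Applying column-multilinearity of the determinant expands this into a sum over tuples $(\ell_1,\dots,\ell_k) \in [n]^k$; the terms with any two equal $\ell$-indices drop out, and grouping the remaining terms by the increasing reordering $K = \{k_1 < \cdots < k_k\}$ introduces the signature of the sorting permutation, which is exactly the sign needed to convert $\det(A[I,\{\ell_1,\dots,\ell_k\}])$ into $\det(A[I,K])$. The result is the classical identity $\det((AB)[I,J]) = \sum_{|K|=k} \det(A[I,K]) \det(B[K,J])$, which is exactly the $(I,J)$-entry of $\calC_k(A)\calC_k(B)$.

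Item 2 is immediate from $(A^\dagger)[I,J] = (A[J,I])^\dagger$ combined with $\det(M^\dagger) = \overline{\det(M)}$. Item 3 follows because when $A$ is diagonal and $I \ne J$, any $i \in I \setminus J$ contributes an all-zero row to $A[I,J]$ (since $A[i,j]=0$ for $j \ne i$), so the minor vanishes, while $A[I,I]$ is itself diagonal with determinant $\prod_{i \in I} a_i$; the $\calC_k(\Id) = \Id$ claim is the $a_i = 1$ specialization. Item 4 then drops out by chaining the first three: $\calC_k(Q^\dagger A Q) = \calC_k(Q^\dagger)\, \calC_k(A)\, \calC_k(Q) = \calC_k(Q)^\dagger\, \calC_k(A)\, \calC_k(Q)$, and $\calC_k(Q)$ is diagonal with $(I,I)$-entry $\prod_{i \in I} q_i$, so sandwiching multiplies $\calC_k(A)_{I,J}$ by $\overline{\prod_{i \in I} q_i} \cdot \prod_{j \in J} q_j$ as claimed. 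Item 5 uses Cauchy--Binet one more time: $A^\dagger A = \Id$ implies $\calC_k(A)^\dagger \calC_k(A) = \calC_k(A^\dagger A) = \calC_k(\Id) = \Id$.

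For item 6, I would apply column-multilinearity of $\det$ to $\charpoly{A}{x} = \det(x \Id - A)$. Each column of $x\Id - A$ is $x\, e_j - A[:,j]$, so expanding over all $n$ columns gives a sum over $S \subseteq [n]$ indexing the columns where the $-A[:,j]$ contribution is selected; the $e_j$ columns for $j \notin S$ then pin the corresponding rows, and the residual submatrix is $(-A)[S,S]$ with determinant $(-1)^{|S|}\det(A[S,S])$. Collecting the $x^{n-k}$ coefficient and summing over $|S|=k$ yields $(-1)^k \sum_{|S|=k} \det(A[S,S]) = (-1)^k \tr \calC_k(A)$, which is the desired formula. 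The one place where genuine combinatorial work happens is the column-expansion/sign-sorting step for Cauchy--Binet; everything else is formal once Cauchy--Binet is in hand, and I expect no real obstacle beyond careful bookkeeping of signs and index sets.
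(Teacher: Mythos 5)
Your proposal is correct, and items 1 through 5 follow essentially the same chain as the paper's footnote: treat Cauchy--Binet as the substantive fact, observe that items 2 and 3 are immediate from the definition, and obtain items 4 and 5 by composing these.

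The one place you diverge from the paper is item 6. The paper's footnote suggests establishing that $\tr\,\calC_k(A)$ equals the $k$th elementary symmetric polynomial of $A$'s eigenvalues, which gives the formula directly for diagonalizable $A$ and then requires a density/continuity argument to reach general $A$. You instead expand $\det(x\,\Id - A)$ by column-multilinearity, writing each column as $x\,e_j - A[:,j]$ and collecting terms over the subset $S$ of columns where the $-A[:,j]$ part is chosen; the $e_j$ columns pin down rows outside $S$, leaving $\det\bigl((-A)[S,S]\bigr) = (-1)^{|S|}\det\bigl(A[S,S]\bigr)$, and summing over $|S|=k$ gives the coefficient $(-1)^k\,\tr\,\calC_k(A)$. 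This is a cleaner route: it is purely algebraic, holds verbatim over any commutative ring, and avoids the diagonalizability detour entirely. You also spell out the proof of Cauchy--Binet itself (via column-multilinearity, dropping repeated-index terms, and sign-sorting), which the paper simply cites; that account is standard and correct. No gaps.
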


The Cauchy--Binet Theorem yields a formula for the minor of a matrix product in terms of minors of the multiplicands.  One can also obtain formulas for minors of sums of matrices in terms of minors of the summands.  The two-matrix case is classical; see, e.g., \cite[\S44, Ex.~5]{Ait44} or \cite[Lemma~A.1]{CSS07}.  Determinants of sums of more than two matrices were studied in, e.g.,~\cite{Ami80,RS87}; we quote here a formula whose short proof is given in~\cite{HPS18}:
\begin{proposition}                                     \label{prop:det-sum}
    (\cite[Lemma~3.1]{HPS18}.)  For $n \times n$ matrices $A_1, \dots, A_\atms$,
    \[
        \calC_n(A_1 + \cdots + A_\atms)_{[n],[n]} = \det(A_1 + \cdots + A_\atms)
        = \sum_{\substack{I_1 \sqcup \cdots \sqcup I_\atms = [n] \\
                                     J_1 \sqcup \cdots \sqcup J_\atms = [n] \\
                                     |I_t| = |J_t|\ \forall t \in [\atms]}} \pm \calC_{|I_1|}(A_1)_{I_1, J_1} \cdots \calC_{|I_\atms|}(A_\atms)_{I_\atms,J_\atms},
    \]
    where the $\pm$ sign corresponding to partitions $(I_1, \dots, I_\atms)$ and $(J_1, \dots, J_\atms)$ is equal to $\sgnrep(\sigma)$, where $\sigma \in \symm{n}$ is the permutation taking $I_t$ to $J_t$ for each $t \in [c]$.
\end{proposition}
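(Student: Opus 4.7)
The plan is to derive the identity directly from the Leibniz formula for the determinant, grouping the resulting terms carefully according to a row-coloring and a pair of ordered partitions.

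First, I will write
\[
    \det(A_1 + \cdots + A_\atms) = \sum_{\sigma \in \symm{n}} \sgnrep(\sigma) \prod_{i=1}^n (A_1 + \cdots + A_\atms)[i,\sigma(i)],
\]
and expand the product across the sums inside, obtaining a sum over all ``colorings'' $\chi : [n] \to [\atms]$ of
\[
    \sum_{\chi : [n] \to [\atms]} \ \sum_{\sigma \in \symm{n}} \sgnrep(\sigma) \prod_{i=1}^n A_{\chi(i)}[i,\sigma(i)].
\]
The coloring $\chi$ is equivalent to an ordered partition $(I_1,\dots,I_\atms)$ of $[n]$ via $I_t = \chi^{-1}(t)$. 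For each such $\chi$, I then organize the inner sum by the image partition $(J_1,\dots,J_\atms) = (\sigma(I_1),\dots,\sigma(I_\atms))$, which necessarily satisfies $|I_t|=|J_t|$.

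For fixed ordered partitions $(I_t)$ and $(J_t)$, let $\sigma_{\star}$ be the canonical \emph{order-preserving} bijection that sends the $s$th smallest element of $I_t$ to the $s$th smallest element of $J_t$ for every $t$. Any permutation $\sigma$ with $\sigma(I_t) = J_t$ can then be written uniquely as $\sigma = \sigma_\star \circ \tau$ where $\tau \in \symm{I_1} \times \cdots \times \symm{I_\atms}$ stabilizes the blocks $I_t$ setwise. Since $\sgnrep$ is multiplicative and splits across the block-diagonal structure of $\tau$, the inner sum factors:
\[
    \sum_{\sigma : \sigma(I_t) = J_t} \sgnrep(\sigma) \prod_{i \in [n]} A_{\chi(i)}[i,\sigma(i)]
    = \sgnrep(\sigma_\star) \prod_{t=1}^\atms \sum_{\tau_t \in \symm{I_t}} \sgnrep(\tau_t) \prod_{i \in I_t} A_t[i, \sigma_\star(\tau_t(i))].
\]
By the Leibniz formula applied in reverse to the submatrix with row set $I_t$ and column set $J_t$ (both listed in their natural order, with $\sigma_\star$ providing the identification), each inner factor is exactly $\det(A_t[I_t,J_t]) = \calC_{|I_t|}(A_t)_{I_t, J_t}$.

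Summing over all ordered partitions $(I_t)$ and $(J_t)$ with matching block sizes yields the claimed formula, with the sign being $\sgnrep(\sigma_\star)$, which is precisely the permutation described in the proposition as ``taking $I_t$ to $J_t$ for each $t$''. The only subtlety I foresee is verifying that the sign matches the stated convention; this amounts to observing that all reasonable normalizations of ``the permutation taking $I_t$ to $J_t$'' differ from $\sigma_\star$ by a product of block permutations (one per $t$), and that absorbing any such change into the $\tau_t$'s preserves the value of each $\det(A_t[I_t,J_t])$ only up to an overall sign that is absorbed back into $\sgnrep(\sigma_\star)$. Once this bookkeeping is done, the identity follows.
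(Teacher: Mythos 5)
The paper does not give its own proof of this statement; it quotes it from~\cite{HPS18} and remarks that one could obtain it by ``unrolling'' the proof of \Cref{prop:matrix-minor} in \Cref{app:minors}, which proceeds by converting sums to products via the identity $A+B = \left[\begin{smallmatrix}A & \Id\end{smallmatrix}\right]\left[\begin{smallmatrix}\Id \\ B\end{smallmatrix}\right]$ and then applying Cauchy--Binet together with cofactor expansions for augmentation nodes. Your proof is correct and takes a genuinely different, more direct route: you expand the Leibniz formula for $\det(A_1+\cdots+A_\atms)$, interpret the resulting monomials as a choice of row-coloring $\chi$ (giving the $I_t$'s) together with a permutation $\sigma$, group by the image blocks $J_t = \sigma(I_t)$, and then factor each inner sum over the stabilizer $\symm{I_1}\times\cdots\times\symm{I_\atms}$ into a product of minors $\det(A_t[I_t,J_t])$. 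The key step --- writing $\sigma = \sigma_\star \circ \tau$ with $\sigma_\star$ order-preserving and $\tau$ block-diagonal, so that the sign and the product both factor across blocks --- is exactly right, and your handling of the sign ambiguity is fine (indeed one can check directly that if $\sigma_\star' = \sigma_\star \circ \rho$ with $\rho$ block-stabilizing, then the sign changes by $\sgnrep(\rho)$ while the product of block-minors picks up a compensating $\prod_t \sgnrep(\rho|_{I_t}) = \sgnrep(\rho)$, so the total is unchanged). The advantage of your argument is that it is self-contained and purely combinatorial, requiring no Cauchy--Binet machinery, which is appealing when one wants just this identity rather than the full generality of \Cref{prop:matrix-minor}; the paper's route is better suited to its downstream needs since the same induction simultaneously handles arbitrary nestings of products, sums, and augmentations.
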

We will need to consider minors of more complicated expressions than just products or sums of matrices.  On the other hand, we will not need to know precise formulas; just something about their structure.  To that end, we state \Cref{prop:our-needed-formula} below.  A proof of a generalization of \Cref{prop:our-needed-formula} is given in \Cref{app:minors} (and ``unrolling'' that proof would yield \Cref{prop:det-sum}, e.g.).
\begin{proposition}                                     \label{prop:our-needed-formula}
    Let $A^{(1)}_1, \dots, A^{(1)}_{t_1}, A^{(2)}_1, \dots, A^{(2)}_{t_2}, \dots, A^{(\atms)}_1, \dots, A^{(\atms)}_{t_\atms}$ be indeterminate matrices.  For each $A^{(i)}_j$, let $\overline{A}^{(i)}_j$ be a (potentially) augmented matrix, containing~$A^{(i)}_j$ as a submatrix and with all other entries being constants.
    Then each minor
    \[
        \calC_k\parens*{\ol{A}^{(1)}_1 \cdots \ol{A}^{(1)}_{t_1} + \ol{A}^{(2)}_1 \cdots \ol{A}^{(2)}_{t_2} + \cdots + \ol{A}^{(\atms)}_1 \cdots \ol{A}^{(\atms)}_{t_\atms}}_{I,J}
    \]
    is a fixed linear combination (depending only on $k, I, J, t_1, \dots, t_\atms$ and the constants used in forming the augmentations $\ol{A}^{(i)}_j$) of products of the form
    \[
        \calC_\star(A^{(1)}_1)_{\star,\star}\cdots \calC_\star(A^{(1)}_{t_1})_{\star,\star}
        \cdot
        \calC_\star(A^{(2)}_1)_{\star,\star}\cdots \calC_\star(A^{(2)}_{t_2})_{\star,\star}
        \cdots \calC_\star(A^{(\atms)}_1)_{\star,\star}\cdots \calC_\star(A^{(\atms)}_{t_\atms})_{\star,\star}.
    \]
\end{proposition}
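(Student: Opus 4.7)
The plan is to prove this by iteratively peeling apart the minor operation, layer by layer, using three classical expansion tools: a minor-analogue of \Cref{prop:det-sum} for sums, the generalized Cauchy--Binet identity (\Cref{fact:compounds}\Cref{itm:dagger}, really the product identity) for products, and a Laplace expansion to handle each augmented matrix. The upshot is that at every step, the remaining ``unknowns'' (entries of the $A^{(i)}_j$) get pushed further inside while only constants are pulled outside, so the final expression is a constant-coefficient linear combination of products of minors of the $A^{(i)}_j$, as claimed.

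First, I would reduce the general $k$-minor case to the determinantal case by observing that $\mathcal{C}_k(M)_{I,J} = \det(M[I,J])$ and that $(M_1 + \dots + M_c)[I,J] = M_1[I,J] + \dots + M_c[I,J]$. Applying the minor version of \Cref{prop:det-sum} to this sum of $k \times k$ matrices expresses the minor as a signed sum, indexed by ordered partitions $I = I_1 \sqcup \cdots \sqcup I_c$ and $J = J_1 \sqcup \cdots \sqcup J_c$ with $|I_i| = |J_i|$, of products $\prod_{i=1}^{c} \mathcal{C}_{|I_i|}\bigl(\ol{A}^{(i)}_1 \cdots \ol{A}^{(i)}_{t_i}\bigr)_{I_i, J_i}$. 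These signs and the index sets depend only on $k$, $I$, $J$, so they constitute allowed constants.

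Second, I would apply the generalized Cauchy--Binet identity iteratively $t_i - 1$ times to each factor $\mathcal{C}_{|I_i|}\bigl(\ol{A}^{(i)}_1 \cdots \ol{A}^{(i)}_{t_i}\bigr)_{I_i, J_i}$, expanding it as a sum over intermediate index sets $K_{i,1}, \dots, K_{i, t_i - 1}$ of products $\mathcal{C}_{|I_i|}(\ol{A}^{(i)}_1)_{I_i, K_{i,1}} \cdot \mathcal{C}_{|I_i|}(\ol{A}^{(i)}_2)_{K_{i,1}, K_{i,2}} \cdots \mathcal{C}_{|I_i|}(\ol{A}^{(i)}_{t_i})_{K_{i, t_i - 1}, J_i}$. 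This requires that the shapes of the $\ol{A}^{(i)}_j$'s match up along the product, which they will by assumption.

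Finally, for each minor $\mathcal{C}_\ell(\ol{A}^{(i)}_j)_{I', J'}$ appearing, I would perform a Laplace expansion of the determinant $\det(\ol{A}^{(i)}_j[I', J'])$ along the rows and columns that correspond to the submatrix position of $A^{(i)}_j$ inside $\ol{A}^{(i)}_j$. Because every entry of $\ol{A}^{(i)}_j$ outside the $A^{(i)}_j$-block is a fixed constant, each Laplace term factors as (a constant-valued minor of the augmenting portion) times (a minor of $A^{(i)}_j$, i.e., some $\mathcal{C}_\star(A^{(i)}_j)_{\star, \star}$); the result is a fixed linear combination of minors of $A^{(i)}_j$ with constant coefficients. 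Chaining the three stages and collecting constants yields exactly the structural form stated. The main obstacle I anticipate is the bookkeeping in the third step: carefully tracking which row- and column-subsets of the augmented matrix overlap with the embedded $A^{(i)}_j$-block, and verifying that after Laplace expansion the ``leftover'' factor is indeed a constant and not accidentally still entangled with the indeterminates --- this is where a careful case split (on whether the chosen rows/columns straddle the block boundary or lie entirely inside/outside it) is required, after which multilinearity of the determinant finishes it off.
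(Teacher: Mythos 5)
Your approach is correct in substance, but it takes a genuinely different route from the paper, and it's worth comparing the two. The paper (in \Cref{app:minors}) proves a more general statement, \Cref{prop:matrix-minor}, by introducing a ``matrix formula'' as a tree of product/sum/augmentation nodes, then preprocessing: every sum node is eliminated via the identity
\[
A + B = \left[\begin{array}{c;{2pt/2pt}c} A & \Id\end{array}\right]\left[\begin{array}{c} \Id \\ \hdashline[2pt/2pt] B\end{array}\right],
\]
and every augmentation is split so that it adds only one row or column at a time. After this, a structural induction needs only two cases: products (Cauchy--Binet, exactly as in your step 2) and single-row/column augmentations (a one-line cofactor expansion). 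The payoff of this preprocessing is precisely to avoid the delicate ``multi-row, multi-column, block-straddling'' Laplace bookkeeping you flag as your main obstacle.

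Your proof instead applies the three tools in a fixed order --- the sum-decomposition formula (a minor-version of \Cref{prop:det-sum}), iterated Cauchy--Binet, and a block-Laplace step --- without the recursion. Steps 1 and 2 are fine: restricting to the $k\times k$ submatrix $M[I,J]$ before invoking \Cref{prop:det-sum} is valid since $\calC_k(M)_{I,J}=\det(M[I,J])$, and minors of $M_i[I,J]$ are just minors of $M_i$ with re-indexed row/column sets. Step 3 as phrased is imprecise (generalized Laplace expands along a chosen set of rows \emph{or} columns, not both simultaneously), but the intended claim is true and your closing remark points at the right fix: split each row of $\ol{A}^{(i)}_j[I',J']$ that meets the $A^{(i)}_j$-block into an ``indeterminate part'' (zero outside the block columns) plus a ``constant part'' (zero inside them), expand by multilinearity over rows, and then Laplace along the indeterminate rows; the only surviving column choices are subsets of the block columns, giving a minor of $A^{(i)}_j$ times a constant complementary minor. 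So the gap is only expository. The net comparison: your argument is more direct and avoids introducing the formula-tree formalism, but at the price of a harder augmentation lemma; the paper's reduction to single-row/column augmentations trades a little setup for a trivial inductive step and, as a bonus, yields the more general \Cref{prop:matrix-minor} with no extra work.
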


We now consider minors of random matrices.
\begin{definition}
    Let $\bA$ and $\bB$ be $\C^{m \times n}$-valued random matrices.  We say they have \emph{matching first moments} if $\E[\bA] = \E[\bB]$; i.e., $\E[\bA_{ij}] = \E[\bB_{ij}]$ for all $i,j$.  We say that they have \emph{matching first compound moments} if, for each~$k$,  $\calC_k(\bA)$ and $\calC_k(\bB)$ have matching first moments.
\end{definition}
We will also define matching \emph{second} compound moments in terms of all possible products of two minors:
\begin{definition}
    Let $\bA$ and $\bB$ be $\C^{m \times n}$-valued random matrices.  We say they have \emph{matching second compound moments} if $\E[\calC_k(\bA)_{IJ}^* \cdot \calC_{k'}(\bA)_{I'J'}] = \E[\calC_k(\bB)_{IJ}^* \cdot \calC_{k'}(\bB)_{I'J'}]$ for all~$k, k', I, J, I', J'$.
\end{definition}

To see an example of matching second compound moments, let us recall some facts from representation theory. By virtue of the Cauchy--Binet Theorem, if $\pi$ is a representation of group~$\Gamma$, then so too $\calC_k \circ \pi$.\footnote{And from \Cref{fact:compounds}, \Cref{itm:unitary}, if $\pi$ is unitary, then so too is $\calC_k \circ \pi$.}  This is known as the \emph{$k$th~exterior power representation},~$\bigwedge^k \pi$.  We follow \cite{HPS18}'s definition of ``Property ($\calP 1$)''.
\begin{definition}  \label{def:Property-P1}
    Let $\pi$ be a $\repdim$-dimensional representation of a group $\Gamma$. We say $(\Gamma,\pi)$ satisfy Property ($\calP 1$) if the representations $\bigwedge^k \pi$ for $0 \le k \le d$ are irreducible and pairwise non-isomorphic.
\end{definition}

\begin{remark}  \label{rem:std-and-sgn-std-P1}
    $(\symm{\nlift+1}, \stdrep)$ satisfies $(\calP 1)$.
\end{remark}

\begin{remark} \label{rem:hyper-P1}
    $(\Hyperoctahedrals{\nlift}, \idrep)$ satisfies $(\calP 1)$, where $\idrep$ is the \emph{defining} representation of $\Hyperoctahedrals{\nlift}$, which maps to matrices with $\{0,\pm 1\}$ entries with exactly one nonzero in each row and column. The reason this is true is that $\Hyperoctahedrals{\nlift}$ is a Coxeter group and $(\calP 1)$ is known to hold for all Coxeter groups. This result is attributed to Robert Steinber by \cite[Chapter 5, \S 2, Exercise 3(d)]{Bour07}. The reader can find a proof in \cite[Section 5.1]{GP00}.\footnote{The definition of $\Hyperoctahedrals{\nlift}$ that the reader can keep in mind is the set of all signed permutation matrices.}
\end{remark}

We also have the following \emph{Grand Orthogonality Theorem}:
\begin{theorem}                                     \label{thm:grand-orthogonality-theorem}
    Let $\pi$ and $\pi'$ be irreducible $\repdim$-dimensional representations of the compact group~$\Gamma$.  Let $\bg \sim \Gamma$ be drawn according to the uniform (Haar) distribution and write $\bA = \pi(\bg)$, $\bB = \pi'(\bg)$.  Then
    \begin{equation}    \label{eqn:got-rhs}
        \E[\bA_{i,j}^* \cdot \bB_{i',j'}] = \delta_{\pi\pi'}\delta_{ii'} \delta_{jj'}  \frac1d,
    \end{equation}
    where $\delta$ is the Kronecker delta and $\delta_{\pi\pi'}$ corresponds to whether or not $\pi$ and $\pi'$ are isomorphic.
\end{theorem}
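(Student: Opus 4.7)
The plan is to derive the identity from Schur's Lemma via the standard Haar-averaging argument that underlies the Schur orthogonality relations. For each matrix $M \in \C^{\repdim \times \repdim}$, introduce the averaged operator
\[
    T_M \;:=\; \E_{\bg \sim \Gamma}\bracks*{\pi'(\bg)\, M\, \pi(\bg)^\dagger}.
\]
First I would verify that $T_M$ is an intertwiner from $\pi$ to $\pi'$: for any $h \in \Gamma$, left-invariance of the Haar (uniform) measure on $\Gamma$ gives
\[
    \pi'(h)\, T_M\, \pi(h)^\dagger \;=\; \E_{\bg}\bracks*{\pi'(h\bg)\, M\, \pi(h\bg)^\dagger} \;=\; T_M,
\]
so $\pi'(h)\, T_M \;=\; T_M\, \pi(h)$ for all $h \in \Gamma$.

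Next I would apply Schur's Lemma to the intertwiner $T_M$. Because $\pi$ and $\pi'$ are both irreducible, either they are non-isomorphic (so $T_M = 0$ for every choice of $M$), or they are isomorphic, in which case $T_M$ is a scalar multiple of the identity, say $T_M = c(M)\,\Id$. The scalar $c(M)$ is determined by taking traces: unitarity of $\pi(\bg)$ together with the cyclic property of the trace gives $\tr(\pi'(\bg)M\pi(\bg)^\dagger) = \tr(M)$ pointwise (after identifying $\pi'$ with $\pi$ via a common basis), so $\repdim \cdot c(M) = \tr(T_M) = \tr(M)$, whence $c(M) = \tr(M)/\repdim$.

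Finally I would extract the entrywise formula by specializing $M$ to a standard basis matrix. Take $M = \ketbra{j'}{j}$, for which $\tr(M) = \delta_{jj'}$. Writing out the $(i',i)$ entry of $T_M$ and using $(\pi(\bg)^\dagger)_{j,i} = \pi(\bg)_{i,j}^{\,*}$ yields
\[
    (T_M)_{i',i} \;=\; \E\bracks*{\pi'(\bg)_{i',j'}\, \pi(\bg)_{i,j}^{\,*}} \;=\; \E\bracks*{\bA_{i,j}^{\,*}\cdot \bB_{i',j'}}.
\]
In the non-isomorphic case this equals $0$, matching the $\delta_{\pi\pi'}$ factor in \Cref{eqn:got-rhs}; in the isomorphic case it equals $c(M)\,\delta_{i,i'} = \delta_{i,i'}\delta_{j,j'}/\repdim$, reproducing the right-hand side exactly. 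The only delicate point in the argument is the handling of the isomorphic-but-not-literally-equal case: one must absorb the intertwining unitary between $\pi$ and $\pi'$ into the choice of basis so that the indices $i,j$ on both sides refer to compatible bases, which is why the statement is naturally phrased in terms of $\delta_{\pi\pi'}$ for a fixed choice of representatives from each isomorphism class.
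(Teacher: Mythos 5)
The paper states the Grand Orthogonality Theorem without proof, treating it as a classical fact (it is the Schur orthogonality relations for compact groups, with Haar measure normalized to total mass one). Your argument via the averaged intertwiner $T_M = \E_{\bg}[\pi'(\bg)\,M\,\pi(\bg)^\dagger]$, Schur's Lemma, and the trace computation is the standard textbook derivation, and it is correct and complete, including the careful remark that in the isomorphic case one must fix a common basis so that $\delta_{\pi\pi'}$ has an unambiguous meaning entrywise. There is nothing in the paper to compare against, but your proof fills the gap soundly.
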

Using the fact that the right-hand side of \Cref{eqn:got-rhs} does not depend on~$\Gamma$ except through whether $\pi$ and $\pi'$ are isomorphic, we conclude the following:
\begin{corollary}                                       \label{cor:P1}
    Let $(\Gamma,\pi)$ and $(\Gamma',\pi')$ be pairs satisfying $(\calP 1)$.  Write $\bA = \pi(\bg)$ and $\bB = \pi'(\bg')$ for $\bg \sim \Gamma$, $\bg' \sim \Gamma'$ drawn from the uniform (Haar) distribution.  Then $\bA$ and $\bB$ have matching second compound moments.
\end{corollary}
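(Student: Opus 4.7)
The plan is to reduce the corollary to a direct application of the Grand Orthogonality Theorem, using the fact that compound matrices realize exterior power representations. First, I would invoke the Cauchy--Binet property $\calC_k(AB) = \calC_k(A)\calC_k(B)$ from \Cref{fact:compounds} to observe that, for $(\Gamma, \pi)$ with $\pi$ of dimension $d$, the map $g \mapsto \calC_k(\pi(g))$ is itself a representation of $\Gamma$ of dimension $\binom{d}{k}$, namely the exterior power $\bigwedge^k \pi$, with matrix entries indexed by $I, J \in \binom{[d]}{k}$. The same holds for $(\Gamma', \pi')$, which has the same dimension $d$ since otherwise the compound matrix sizes would not match and the moment-matching condition would be meaningless.

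Next, I would apply \Cref{thm:grand-orthogonality-theorem} (in its general form, which also gives $0$ whenever the two irreducibles are non-isomorphic, regardless of whether their dimensions happen to coincide) to $\bigwedge^k \pi$ and $\bigwedge^{k'} \pi$ acting on $\Gamma$. Property $(\calP 1)$ is precisely what is needed here: it states that each $\bigwedge^k \pi$ for $0 \le k \le d$ is irreducible and that distinct values of $k$ produce pairwise non-isomorphic representations. Therefore
\[
\E_{\bg \sim \Gamma}\bracks*{\calC_k(\pi(\bg))_{IJ}^* \cdot \calC_{k'}(\pi(\bg))_{I'J'}} \;=\; \delta_{kk'}\,\delta_{II'}\,\delta_{JJ'}\,\frac{1}{\binom{d}{k}}.
\]

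Finally, the crucial observation is that the right-hand side above depends only on the indices $k, k', I, J, I', J'$ and on the common dimension $d$, and not on any other feature of $\Gamma$ or $\pi$. Applying exactly the same argument to $(\Gamma', \pi')$ (which also satisfies $(\calP 1)$) yields the identical expression, so the two moments agree.

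I do not anticipate a serious obstacle: once one recognizes the compound construction as the exterior power and invokes $(\calP 1)$ together with GOT, the matching is immediate. The only subtlety worth double-checking is the use of GOT across representations of possibly different dimensions $\binom{d}{k} \ne \binom{d}{k'}$, which is handled by the general statement that expected products of matrix entries vanish between any two non-isomorphic irreducibles, a standard strengthening of the form of GOT quoted in the excerpt.
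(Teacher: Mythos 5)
Your proof is correct and follows essentially the same route the paper intends: recognize compound matrices as exterior powers via Cauchy--Binet, invoke $(\calP 1)$ for irreducibility and pairwise non-isomorphism, apply the Grand Orthogonality Theorem, and observe that the resulting expression depends only on $d$. Your closing remark --- that the paper's statement of GOT nominally assumes equal dimensions whereas the cross-terms $k \ne k'$ require the general fact that matrix-coefficient moments vanish between any two non-isomorphic irreducibles --- is a fair and accurate observation about a small imprecision in how the theorem is quoted.
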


We now come to our main theorem for this section, which generalizes~\cite[Theorem~1.8]{HPS18}.
\begin{theorem}                                       \label{thm:factorpolyinvariance}
    The expected characteristic polynomial $\E[\charpoly{\bcalA}{x}]$ for $\bcalA$ defined in \Cref{eq:random-lift-represented} has the same value for any pair $(\Gamma, \pi)$ satisfying~$(\calP1)$. 
\end{theorem}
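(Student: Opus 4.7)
My plan is to express $\E[\charpoly{\bcalA}{x}]$ as a fixed $\C$-linear combination---with coefficients depending only on $A_1,\dots,A_\atms$ and not on the choice of $(\Gamma,\pi)$---of \emph{second compound moments} $\E[\calC_\ell(\pi(\bg))^*_{*,*}\cdot \calC_{\ell'}(\pi(\bg))_{*,*}]$ taken over Haar-uniform $\bg\sim\Gamma$, and then appeal to \Cref{cor:P1}, which says these moments agree between any two pairs satisfying $(\calP 1)$.

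By item 6 of \Cref{fact:compounds} we have $\charpoly{\bcalA}{x}=\sum_{k=0}^{nd}(-1)^k\tr\calC_k(\bcalA)\cdot x^{nd-k}$, so it suffices to show each $\E[\tr\calC_k(\bcalA)]$ is invariant. Setting $\bB_j:=\pi^{\boxtimes V}(\bQ_j)$ so that $\bcalA=\sum_j \bB_j^\dagger A_j\bB_j$, I would expand $\tr\calC_k(\bcalA)=\sum_I\det(\bcalA[I,I])$ in three stages. First, apply \Cref{prop:our-needed-formula} to express each $\det(\bcalA[I,I])$ as a fixed $\C$-linear combination of products $\prod_j \calC_{m_j}(\bB_j^\dagger A_j \bB_j)_{I_j,J_j}$, with coefficients and indices depending only on $I$ and the partition data. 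Second, apply generalized Cauchy--Binet (item 1 of \Cref{fact:compounds}), together with item 2 to convert minors of $\bB_j^\dagger$ into conjugated minors of $\bB_j$, yielding
\[
\calC_{m_j}(\bB_j^\dagger A_j \bB_j)_{I_j,J_j}=\sum_{K,L}\calC_{m_j}(\bB_j)^*_{K,I_j}\cdot\calC_{m_j}(A_j)_{K,L}\cdot\calC_{m_j}(\bB_j)_{L,J_j},
\]
so every dependence on $\bB_j$ is now packaged into one conjugate and one unconjugated minor of $\bB_j$, multiplied by the constant $\calC_{m_j}(A_j)_{K,L}$. Third, since $\bB_j=\pi^{\boxtimes V}(\bQ_j)$ is block-diagonal with blocks $\pi(\bQ_j(v))$, each minor of $\bB_j$ factors (with canonical lexicographic ordering absorbing signs) as $\prod_{v\in V}\calC_{|K_v|}(\pi(\bQ_j(v)))_{K_v,L_v}$, vanishing unless $|K_v|=|L_v|$ for every $v$, where $K_v, L_v$ denote the parts of $K, L$ lying in block $v$.

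Chaining these three expansions together, $\tr\calC_k(\bcalA)$ becomes a fixed $\C$-linear combination---with coefficients depending only on the matrices $A_1,\dots,A_\atms$---of products
\[
\prod_{j=1}^{c}\prod_{v\in V}\calC_{\ell_{j,v}}(\pi(\bQ_j(v)))^*_{I'_{j,v},J'_{j,v}}\cdot\calC_{\ell'_{j,v}}(\pi(\bQ_j(v)))_{I''_{j,v},J''_{j,v}}.
\]
Since the $\bQ_j(v)$ are mutually independent across both $j$ and $v$ and each is Haar-uniform on $\Gamma$, taking expectation factorizes this outer product into one second compound moment of $\pi(\bg)$ per pair $(j,v)$. \Cref{cor:P1} precisely states that each such factor is unchanged when $(\Gamma,\pi)$ is replaced by any other pair $(\Gamma',\pi')$ satisfying $(\calP 1)$, so every term in the expansion of $\E[\tr\calC_k(\bcalA)]$, and hence $\E[\charpoly{\bcalA}{x}]$ itself, is invariant.

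The main obstacle I anticipate is combinatorial bookkeeping rather than a conceptual issue: tracking signs and non-vanishing conditions through the block-diagonal decomposition of stage three (handled cleanly if we order indices by block first), and ensuring that the product-of-conjugates structure $\bB_j^\dagger A_j \bB_j$ fits the ``augmented-indeterminate'' hypothesis of \Cref{prop:our-needed-formula} in stage one (verified by viewing $\bB_j^\dagger$ and $\bB_j$ themselves as the indeterminate matrices, with the remaining $A_j$ factors absorbed as augmenting constants). Conceptually the argument is slick: the three-stage expansion produces nothing beyond two-minor moments of $\pi(\bg)$, and $(\calP 1)$ is exactly the condition that pins down these moments independently of $(\Gamma,\pi)$.
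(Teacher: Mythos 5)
Your proposal is correct and follows essentially the same route as the paper: reduce to the traces of compound matrices via \Cref{fact:compounds}(\ref{itm:charpoly}), expand each minor of $\bcalA$ as a fixed linear combination of products of minors of the individual $\pi(\bQ_j(v))$'s and of the $A_j$'s, use independence of the $\bQ_j(v)$'s to factor the expectation into per-$(j,v)$ second compound moments, and appeal to \Cref{cor:P1}. The only cosmetic difference is that you carry out the Cauchy--Binet expansion and the block-diagonal peeling of $\pi^{\boxtimes V}(\bQ_j)$ by hand in your stages two and three, whereas the paper packages both steps at once by writing $\pi^{\boxtimes V}(\bQ_j)$ as a product of $\vtcs$ block-diagonal augmented matrices and applying \Cref{prop:our-needed-formula} directly.
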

\begin{proof}
    By \Cref{fact:compounds}, \Cref{itm:charpoly}, it suffices to prove the stronger fact that the first compound moments of~$\calA$ are invariant to the choice of $(\Gamma, \pi)$ satisfying~$(\calP1)$.

    We can write each $\pi^{\boxtimes V}(\bQ_i)$ as
    \[
        \pi^{\boxtimes V}(\bQ_j(1), 1, \dots, 1) \cdot \pi^{\boxtimes V}(1, \bQ_j(2),  \dots, 1) \cdots \pi^{\boxtimes V}(1, 1, \dots, \bQ_j(\vtcs)),
    \]
    and each matrix $\pi^{\boxtimes V}(1, \dots, 1, \bQ_j(v), 1, \dots, 1)$ is simply the random matrix $\pi(\bQ_j(v))$ augmented, block-diagonally, by identity matrices.  Thus \Cref{prop:our-needed-formula} can be applied to~$\bcalA$ and we obtain that each minor $\calC_k(\bcalA)_{IJ}$ is a fixed linear combination of products of the form
    \[
        \prod_{i=1}^\atms \calC_{\star}(\pi(\bQ_j(\vtcs))^\dagger)_{\star,\star} \cdots \calC_{\star}(\pi(\bQ_j(1))^\dagger)_{\star,\star}\cdot  \calC_{\star}(A_j)_{\star,\star} \cdot \calC_{\star}(\pi(\bQ_j(1)))_{\star,\star} \cdots \calC_{\star}(\pi(\bQ_j(\vtcs)))_{\star,\star}.
    \]
    Using linearity of expectation and the fact that all matrices $\bQ_j(r)$ are independent, we get that $\E[\calC_k(\bcalA)_{IJ}]$ is a fixed linear combination of products of expectations of the form
    \[
        \E[\calC_{\star}(\pi(\bQ_j(j)^\dagger))_{\star,\star} \cdot \calC_{\star}(\pi(\bQ_j(j)))_{\star,\star}].
    \]
    But each such expectation is invariant to the choice of $(\Gamma, \pi)$ satisfying~$(\calP1)$, by \Cref{cor:P1}.
\end{proof}

We derive the following corollary with a short proof.
\begin{corollary}   \label{cor:expected-factor-poly-lift}
    Let $H = A_1 + \cdots + A_\atms$ be a sum graph and let $\bG$ be a random additive $\nlift$-lift. Following the proof of \Cref{prop:lift-is-quotient}, we can write $\bG$ as a sum graph $\bA_{1,1}+\cdots+\bA_{1,\nlift} + \cdots + \bA_{\atms,1}+\cdots+\bA_{\atms,\nlift}$.
    Suppose we choose $\Gamma = \symm{\nlift+1}$ and $\pi$ as its standard representation. Then
    \begin{align} \label{eq:expected-factor-poly}
        \E[\charpoly{\bcalA}{x}] = \E[\factorpoly{\bA_{1,1},\cdots,\bA_{\atms,\nlift}}{x}]
    \end{align}
    for $\bcalA$ from \Cref{eq:random-lift-represented}.
\end{corollary}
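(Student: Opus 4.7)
The plan is to exploit the invariance afforded by \Cref{thm:factorpolyinvariance}: we are free to compute $\E[\charpoly{\bcalA}{x}]$ with \emph{any} $(\Gamma,\pi)$ satisfying Property $(\calP 1)$ of matching dimension, not just $(\symm{\nlift+1},\stdrep)$. I would switch to the combinatorially more transparent pair $(\Hyperoctahedrals{\nlift},\idrep)$, which satisfies $(\calP 1)$ by \Cref{rem:hyper-P1} and has the same representation dimension~$\nlift$. The benefit is that a uniform element of $\Hyperoctahedrals{\nlift}$ factors uniquely as a permutation times a diagonal $\pm 1$ matrix, thereby decoupling the ``additive lift'' (the permutation part) from the ``random balanced edge-signing'' (the sign part) to which \Cref{thm:factor-poly-comes-from-random-edge-signing} directly applies.

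Concretely, I would write each uniform $\bQ_i(v)\in\Hyperoctahedrals{\nlift}$ as $\bY_i(v)\bX_i(v)$, where $\bY_i(v)$ is a uniformly random $\nlift\times\nlift$ permutation matrix and $\bX_i(v)$ is a uniformly random diagonal $\pm 1$ matrix, all independent across $i,v$; this is a bijective decomposition of $\Hyperoctahedrals{\nlift}$. Since the outer tensor product multiplies block-by-block, $\idrep^{\boxtimes V}(\bQ_i)=\idrep^{\boxtimes V}(\bY_i)\idrep^{\boxtimes V}(\bX_i)$, and plugging into \Cref{eq:random-lift-represented} yields
\[
    \bcalA = \sum_{i=1}^\atms \idrep^{\boxtimes V}(\bX_i)^\dagger\bigl[\idrep^{\boxtimes V}(\bY_i)^\dagger(A_i\otimes\Id_\nlift)\idrep^{\boxtimes V}(\bY_i)\bigr]\idrep^{\boxtimes V}(\bX_i).
\]
By \Cref{def:balanced-lift} and the decomposition of $A_i^{\bY_i}$ into $\nlift$ vertex-disjoint copies of $A_i$ (as in the proof of \Cref{prop:lift-is-quotient}), the bracketed inner factor equals $\Adj(A_i^{\bY_i}) = \sum_{j=1}^\nlift \Adj(\bA_{i,j})$. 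Setting $\bR_{i,j}:=\idrep^{\boxtimes V}(\bX_i)$, we arrive at $\bcalA = \sum_{i,j}\bR_{i,j}^\dagger \Adj(\bA_{i,j})\bR_{i,j}$.

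Conditioning on $\bY=(\bY_1,\ldots,\bY_\atms)$, the atoms $\bA_{i,j}$ are determined and only the sign matrices $\bR_{i,j}$ remain random. The one subtle point is that $\bR_{i,j}=\bR_{i,j'}$ for $j\ne j'$, so the collection $\{\bR_{i,j}\}$ is not jointly independent; however, the atoms $\bA_{i,1},\ldots,\bA_{i,\nlift}$ have pairwise vertex-disjoint supports in $V\times[\nlift]$, and $\Adj(\bA_{i,j})$ vanishes outside its support. Thus one may harmlessly replace each $\bR_{i,j}$ by a fully independent copy $\widetilde{\bR}_{i,j}$ supported on $\bA_{i,j}$'s vertices, without altering the random matrix $\sum_{i,j}\bR_{i,j}^\dagger \Adj(\bA_{i,j})\bR_{i,j}$ in distribution. \Cref{thm:factor-poly-comes-from-random-edge-signing} then applies conditionally on $\bY$ to give $\E_{\bX}[\charpoly{\bcalA}{x}\mid\bY] = \factorpoly{\bA_{1,1},\ldots,\bA_{\atms,\nlift}}{x}$. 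Taking the outer expectation over $\bY$ --- whose distribution matches that of the $\symm{\nlift}$-potentials defining the additive $\nlift$-lift $\bG$ --- yields \Cref{eq:expected-factor-poly}. The only delicate step is verifying the aforementioned ``effective independence'' of sign matrices, but the vertex-disjointness of the atomic copies makes it transparent.
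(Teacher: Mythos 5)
Your proof is correct and follows essentially the same route as the paper's: pass between $(\symm{\nlift+1},\stdrep)$ and $(\Hyperoctahedrals{\nlift},\idrep)$ via \Cref{thm:factorpolyinvariance} (using \Cref{rem:std-and-sgn-std-P1} and \Cref{rem:hyper-P1}), and then identify the hyperoctahedral expected characteristic polynomial with the expected \factorpolynomial via \Cref{thm:factor-poly-comes-from-random-edge-signing}. Your explicit factorization of each hyperoctahedral element into a permutation part and a sign part, together with the vertex-disjointness argument showing the sign conjugations on the different lifted atoms are effectively independent, carefully fills in the step that the paper's terse proof glosses over.
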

\begin{proof}
    From \Cref{thm:factor-poly-comes-from-random-edge-signing}, the right hand side can be rewritten as as $\E[\charpoly{\widetilde{\bcalA}}{x}]$ where $\widetilde{\bcalA}$ is set according to \Cref{eq:random-lift-represented} with the $(\Gamma,\pi)$ pair chosen as $(\Hyperoctahedrals{\nlift},\idrep)$. The equality immediately follows from combining \Cref{rem:std-and-sgn-std-P1} and \Cref{rem:hyper-P1} with \Cref{thm:factorpolyinvariance}.
\end{proof}

Combining the above with \Cref{thm:root-bound} and the fact that \factorpolynomial of a sum graph is monic, we obtain
\begin{corollary}       \label{cor:root-bound-general}
    Suppose $H=A_1+\cdots+A_\atms$ is a connected sum graph, $\bQ_i$ and $\bcalA$ are the same as in \Cref{eq:random-lift-represented}, and $(\Gamma,\pi)=(\symm{\nlift},\stdrep)$. Then
    $\E[\charpoly{\bcalA}{x}]$ has all its roots in the interval $[-\specrad(X),\specrad(X)]$ where $X = A_1 \addprod \cdots \addprod A_\atms$.
\end{corollary}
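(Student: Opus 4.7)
The plan is to reduce to the \factorpolynomial\ root bound of \Cref{thm:root-bound} via the identity in \Cref{cor:expected-factor-poly-lift}, then transfer the bound from individual realizations to the expectation by a monicity-based sign argument. Since the hypothesis uses $(\symm{\nlift}, \stdrep)$, I first apply \Cref{cor:expected-factor-poly-lift} with $\nlift - 1$ in place of $\nlift$ (legal since $(\symm{\nlift}, \stdrep)$ satisfies $(\calP 1)$ by \Cref{rem:std-and-sgn-std-P1}). This yields
\[
    \E[\charpoly{\bcalA}{x}] \;=\; \E\bigl[\factorpoly{\bA_{1,1},\ldots,\bA_{\atms,\nlift-1}}{x}\bigr],
\]
where $\bA_{i,k}$ are the atoms of a random additive $(\nlift-1)$-lift of $H$.

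For each fixed realization of the lift, \Cref{rem:connected-comp-lift-is-quotient} tells us that within each connected component of the lifted sum graph, the additive product of the relevant atoms is isomorphic to $X = A_1 \addprod \cdots \addprod A_\atms$. Applying \Cref{thm:root-bound} component-by-component and combining via \Cref{fact:factor-factorpoly} (as in \Cref{rem:root-bound-factor}), the realized polynomial $\factorpoly{\bA_{1,1},\ldots,\bA_{\atms,\nlift-1}}{x}$ is real-rooted with all roots in $[-\specrad(X), \specrad(X)]$. It is moreover monic of degree $n(\nlift-1)$: the only contribution to the leading coefficient in \Cref{eqn:factor-coeffs} comes from the empty heap.

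To transfer the bound to the expectation I argue by signs. For any $x_0 > \specrad(X)$, each realized polynomial is strictly positive at $x_0$ (monic with max root at most $\specrad(X)$), and for any $x_0 < -\specrad(X)$, each such polynomial takes the common sign $(-1)^{n(\nlift-1)}$; hence the expectation $\E[\factorpoly{\cdots}{x_0}]$ is nonzero on $\R \setminus [-\specrad(X), \specrad(X)]$. The expectation is itself real-rooted: by \Cref{thm:lifts-interlacing-family} the family $\{\charpoly{\LiftAdj{z}{\stdrep}(H)}{x}\}_z$ is an interlacing family, and by \Cref{fact:perm-from-transpo} averaging under the product distribution that produces uniform $\symm{\nlift}$-potentials is precisely $\E[\charpoly{\bcalA}{x}]$, which is therefore real-rooted by the defining property of interlacing families. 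Combining real-rootedness with the sign analysis gives the claim. The main bookkeeping subtlety is the $\nlift \to \nlift - 1$ index shift when invoking \Cref{cor:expected-factor-poly-lift}; once that shift is in place, the rest is a direct combination of the ingredients already established.
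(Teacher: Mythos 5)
Your proof is correct and follows the same route as the paper: reduce via \Cref{cor:expected-factor-poly-lift} to the per-realization root bound of \Cref{thm:root-bound} (together with \Cref{rem:connected-comp-lift-is-quotient} and \Cref{rem:root-bound-factor}), then transfer to the expectation using monicity plus real-rootedness from the interlacing-family machinery. You also correctly caught and repaired the off-by-one index mismatch between the hypothesis $(\symm{\nlift},\stdrep)$ and the statement of \Cref{cor:expected-factor-poly-lift} (which is phrased for $(\symm{\nlift+1},\stdrep)$ and $\nlift$-lifts), and your explicit sign-plus-real-rootedness step is a welcome unpacking of what the paper compresses into ``since the \factorpolynomial\ is always monic.''
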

\begin{proof}
    From \Cref{cor:expected-factor-poly-lift}, it suffices to show that all roots of the RHS of \Cref{eq:expected-factor-poly} lie in the desired interval. Since the \factorpolynomial is always monic, it is enough to show that for a fixed lift of $H$, called $G$ given by $A_{1,1}+\cdots+A_{\atms,\nlift}$, $\factorpoly{A_{1,1},\dots,A_{\atms,\nlift}}{x}$ has all its roots in $[-\specrad(X),\specrad(X)]$. This follows immediately from combining \Cref{rem:connected-comp-lift-is-quotient} and \Cref{rem:root-bound-factor}.
\end{proof}

\section{$X$-Ramanujan Lifts and Proof of \Cref{thm:additive-quasi}}
In this section, we bring all the tools developed in the previous sections together and prove a statement from which our main theorem follows.

\begin{theorem}
Suppose $H = A_1 + \cdots + A_\atms$ is a sum graph on finite vertex set $V$. Then for every $\nlift\in\N^+$, there is an additive $\nlift$-lift $G$ of $H$ such that every new eigenvalue $\lambda\in\spec(G)\setminus\spec(H)$ has $\lambda \le \specrad(A_1\addprod\cdots\addprod A_\atms)$.
\end{theorem}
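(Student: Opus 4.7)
The plan is to choose $G$ via the probabilistic method on additive $N$-lifts, using the interlacing polynomials machinery developed in Sections \ref{sec:additive-lift-interlacing} and \ref{sec:minors-root-bounds}. Throughout I will use the representation $\stdrep$ of $\symm{N}$, since by \Cref{rem:spectrum-of-lift} the ``new'' eigenvalues of a balanced $N$-lift are precisely the spectrum of $\LiftAdj{\calQ}{\stdrep}(H)$.

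First, I would parameterize random additive $N$-lifts by elements $\bz \in \LiftEnc_{N,V}^\atms$: by \Cref{fact:perm-from-transpo}, there is a product distribution $\calD$ over $\LiftEnc_{N,V}^\atms$ such that the induced potentials $\StringToPotential(\bz_1), \dots, \StringToPotential(\bz_\atms)$ are independent uniform elements of $\symm{N}^V$. For such a $\bz$, the new eigenvalues of the corresponding additive $N$-lift $G^{\calQ(\bz)}$ are exactly the roots of $\charpoly{\LiftAdj{\bz}{\stdrep}(H)}{x}$.

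Next, apply \Cref{thm:lifts-interlacing-family} with $\pi = \stdrep$ to conclude that the family $\{\charpoly{\LiftAdj{z}{\stdrep}(H)}{x}\}_{z \in \LiftEnc_{N,V}^\atms}$ is an interlacing family. Then by \Cref{thm:root-bound-interlace}, there exists a specific $z^\star$ such that
\[
    \maxroot\bigl(\charpoly{\LiftAdj{z^\star}{\stdrep}(H)}{x}\bigr) \;\le\; \maxroot\Bigl(\E_{\bz \sim \calD}\bigl[\charpoly{\LiftAdj{\bz}{\stdrep}(H)}{x}\bigr]\Bigr).
\]
Finally, I would bound the right-hand side using \Cref{cor:root-bound-general}: since $(\symm{N}, \stdrep)$ satisfies Property $(\calP 1)$ (\Cref{rem:std-and-sgn-std-P1}), the expected characteristic polynomial equals the expected \factorpolynomial\ of the lift (\Cref{cor:expected-factor-poly-lift}), which is a convex combination of \factorpolynomial s of sum graphs whose atoms form additive lifts of $H$. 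By \Cref{rem:connected-comp-lift-is-quotient} each connected component of such a lift is a quotient of $X = A_1 \addprod \cdots \addprod A_\atms$, so by \Cref{rem:root-bound-factor} every root lies in $[-\specrad(X), \specrad(X)]$. Taking $G$ to be the additive $N$-lift corresponding to $z^\star$ then gives a lift all of whose new eigenvalues are at most $\specrad(X)$.

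The only mild subtlety I anticipate is handling the case where $H$ is disconnected, so that the ``additive product'' $A_1 \addprod \cdots \addprod A_\atms$ (defined in \Cref{def:additive-product} for the connected case) is interpreted component-wise; this is exactly what \Cref{rem:root-bound-factor} lets us sidestep, since we only need an upper root bound and the maximum spectral radius over components is at most $\specrad(X)$ once we regard $X$ as the disjoint union of its additive-product components. Apart from this bookkeeping, the proof is essentially an assembly of \Cref{thm:lifts-interlacing-family}, \Cref{thm:root-bound-interlace}, and \Cref{cor:root-bound-general}, with no new combinatorial content needed.
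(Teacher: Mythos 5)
Your proposal is correct and follows essentially the same route as the paper's own proof: parameterize random additive $\nlift$-lifts via a product distribution on $\LiftEnc_{\nlift,V}^\atms$, invoke \Cref{thm:lifts-interlacing-family} and \Cref{thm:root-bound-interlace} to find a good $z^\star$, bound the expected polynomial's roots via \Cref{cor:root-bound-general}, and identify the new eigenvalues with the roots of the $\stdrep$-lifted characteristic polynomial via \Cref{rem:spectrum-of-lift}. The disconnectedness subtlety you raise is in fact already internal to \Cref{cor:root-bound-general} (it is the \emph{lift} that may be disconnected, not $H$, which must be connected for $A_1 \addprod \cdots \addprod A_\atms$ to be defined), so no extra bookkeeping is needed beyond what that corollary already provides.
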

\begin{proof}
    Let $\bQ_1,\ldots,\bQ_c$ be uniformly random potentials.
    It follows from \Cref{fact:perm-from-transpo} that there is a product distribution $\calD$ over $\LiftEnc_{\nlift,V}^\atms$ such that each $\bQ_i$ is distributed as $\StringToPotential(\bz_i)$ for $\bz\sim\calD$.
    Thus, we have
    \[
        \LiftAdj{\bcalQ}{\stdrep}(H) = \LiftAdj{\bz}{\stdrep}(H)
    \]
    Recall that from \Cref{thm:lifts-interlacing-family}, $\left\{\charpoly{\LiftAdj{z}{\stdrep}(H)}{x}\right\}_{z\in\LiftEnc_{\nlift, V}^\atms}$ is an interlacing family, which means $\E_{\bz\sim\calD}\left[\charpoly{\LiftAdj{\bz}{\stdrep}(H)}{x}\right]$ is real-rooted and from \Cref{thm:root-bound-interlace} we have the existence of $z^*$ such that
    \[
        \maxroot\left(\charpoly{\LiftAdj{z^*}{\stdrep}(H)}{x}\right) \le \maxroot\left(\E\left[\charpoly{\LiftAdj{\bcalQ}{\stdrep}(H)}{x}\right]\right)
    \]

    By \Cref{cor:root-bound-general}, the right hand side of the above expression is at most $\specrad(A_1\addprod\cdots\addprod A_\atms)$.

    The new eigenvalues of the lift given by $z^*$, namely $\spec(G)\setminus\spec(H)$, are exactly given by the roots of $\charpoly{\LiftAdj{z^*}{\stdrep}(H)}{x}$ from \Cref{rem:spectrum-of-lift} and hence the theorem follows.

\end{proof}

\begin{proof}[Proof of \Cref{thm:additive-quasi}]
    Since $X$ is an additive product graph, it can be written as $A_1\addprod\cdots\addprod A_c$ for graphs $A_1,\ldots,A_c$ on finite vertex set $V$. Let $G_1 = A_1 + ... + A_c$, and let $G_n$ be an additive $n$-lift of $G_1$ for which all new eigenvalues are bounded by $\specrad(X)$. Since $\specrad(X)<\chi(X)$ by assumption, $G_n$ is connected. Thus, each $G_n$ is a quotient of $X$ by \Cref{prop:lift-is-quotient}, and has at most $|V|$ eigenvalues that exceed $\specrad(X)$. Hence $X$ is $|V|$-quasi-Ramanujan.
\end{proof}

\subsection*{Acknowledgments}
We thank Nikhil Srivastava and Marc Potters for providing us with \Cref{thm:finite-free-root-bound} and its proof in a personal communication.  We thank Alex Lubotzky for providing us with some references concerning generalized Ramanujan graphs.  We thank Mohan Ravichandran for bringing~\cite{Rav16,LR18} to our attention.  We thank Peter Sarnak for relevant discussions and for pointing us to~\cite{KFSH19}.

\bibliographystyle{alpha}
\bibliography{main}

\appendix

\section{Formulas for minors} \label{app:minors}

\begin{definition}
    We define a \emph{matrix formula} to be a tree in which the leaves are labeled with distinct formal symbols for matrices, and the internal nodes are of three types: \emph{product}, \emph{sum}, and \emph{augmentation}.   A  product (respectively, sum) gate has two or more ordered children, and computes the product (respectively, sum) of its children.  (It will be convenient to order the children of a sum node, despite commutativity of matrix summation.)  An augmentation node has only one child, but comes together with some fixed additional rows and/or columns; it operates by extending its child with these rows/columns.
\end{definition}
\begin{remark}
    \myfig{.5}{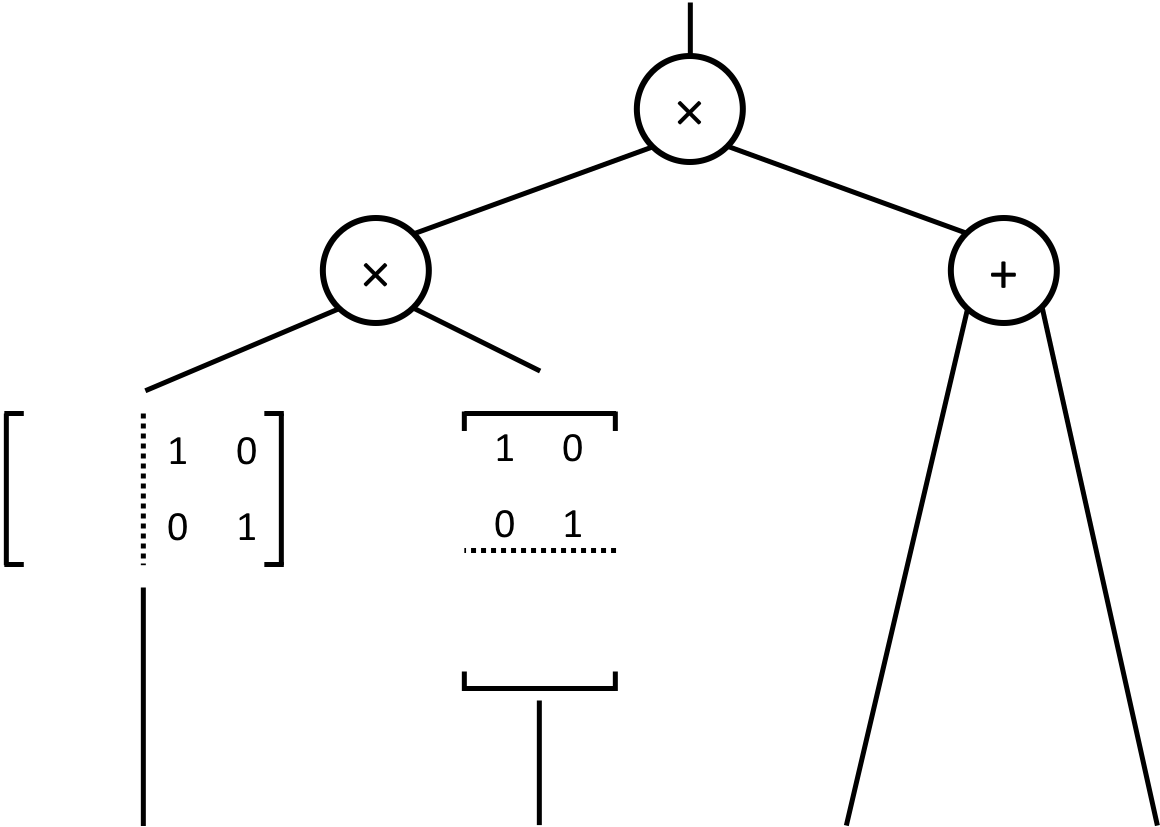}{An example matrix formula $\Phi$}{fig:matrix-formula-eg}

    As an example, the matrix formula~$\Phi$ shown in \Cref{fig:matrix-formula-eg} computes the formula
    \[
    \Phi(A_1, A_2, A_3, A_4) =
    \left[
    \begin{array}{cc;{2pt/2pt}cc}
         &  & 1 & 0 \\
 \multicolumn{2}{c;{2pt/2pt}}{\smash{\raisebox{.5\normalbaselineskip}{$A_{1}$}}}
             & 0 & 1
    \end{array}
    \right]
    \left[
    \begin{array}{cc}
         1 & 0 \\
         0 & 1 \\  \hdashline[2pt/2pt]
 \multicolumn{2}{c}{\smash{\raisebox{-.5\normalbaselineskip}{$A_{2}$}}} \\
            &
    \end{array}
    \right]
    \parens*{A_3 + A_4}.
    \]
    It is tacitly assumed that the matrix dimensions are always appropriate for the operations involved; in our example, $A_1$ should be $2 \times m$, $A_2$ should be $m \times 2$, and $A_3, A_4$ should be $2 \times n$, for some $m,n$.  Incidentally, if $m = 2$ in this example, then $\Phi$ computes $(A_1 + A_2)(A_3 + A_4)$.  We will generally identify a matrix formula $\Phi$ with the function of matrices it computes; we also always list its leaves/arguments in left-to-right order.
\end{remark}

The main result of this section, which includes \Cref{prop:our-needed-formula} as a special case, is the following:
\begin{proposition}                                     \label{prop:matrix-minor}
    Let $\Phi(A_1, \dots, A_t)$ be a matrix formula.  Then any compound matrix entry $\calC_k(\Phi(A_1, \dots, A_t))_{I,J}$ is a linear combination of products of the form
    \[
        \calC_\star(A_1)_{\star,\star} \cdot \calC_\star(A_2)_{\star,\star} \cdots \calC_\star(A_t)_{\star,\star},
    \]
    where the coefficients in the linear combination, as well as the values replacing the $\star$ symbols, depend only on the structure of $\Phi$ and the row/column constants in its augmentation nodes.
\end{proposition}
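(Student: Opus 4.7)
The plan is to prove \Cref{prop:matrix-minor} by structural induction on the matrix formula $\Phi$. The base case is when $\Phi$ is a single leaf labeled $A_i$; here $\calC_k(A_i)_{I,J}$ is itself of the stipulated form, trivially. For the inductive step, there are three cases corresponding to the three node types. The crucial observation that makes the induction clean is that different subformulas of $\Phi$ have disjoint sets of leaf symbols, so when we combine products of minors coming from each subformula, we still get products of the target shape $\calC_\star(A_1)_{\star,\star}\cdots\calC_\star(A_t)_{\star,\star}$ after interleaving.

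For a product node $\Phi=\Phi_1\cdots \Phi_r$, apply generalized Cauchy--Binet (\Cref{fact:compounds}) repeatedly:
\[
\calC_k(\Phi_1\cdots \Phi_r)_{I,J} \;=\; \sum_{K_1,\dots,K_{r-1}} \calC_k(\Phi_1)_{I,K_1}\,\calC_k(\Phi_2)_{K_1,K_2}\cdots \calC_k(\Phi_r)_{K_{r-1},J}.
\]
Each factor $\calC_k(\Phi_i)_{K_{i-1},K_i}$ is, by the inductive hypothesis, a linear combination of products of minors of $\Phi_i$'s leaves.  Multiplying out and using disjointness of leaf-sets across the $\Phi_i$'s yields the required form.

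For a sum node $\Phi=\Phi_1+\cdots+\Phi_r$, I would use the multilinearity of the determinant in its rows.  For $r=2$, splitting each row of the $k\times k$ minor $\det((\Phi_1+\Phi_2)[I,J])$ into its $\Phi_1$ and $\Phi_2$ contributions and then applying a generalized Laplace expansion along the chosen-$\Phi_1$ rows gives the identity
\[
\calC_k(\Phi_1+\Phi_2)_{I,J} \;=\; \sum_{\substack{I=I_1\sqcup I_2\\J=J_1\sqcup J_2\\|I_1|=|J_1|}} \pm\,\calC_{|I_1|}(\Phi_1)_{I_1,J_1}\,\calC_{|I_2|}(\Phi_2)_{I_2,J_2};
\]
the general $r$-fold version follows by iteration (this is essentially the logic behind \Cref{prop:det-sum}).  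Apply induction to each factor; again, disjointness of the leaf-sets of $\Phi_1,\dots,\Phi_r$ lets us concatenate the resulting products into the target shape.

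For an augmentation node $\Phi=\overline{\Psi}$, note that $\overline{\Psi}$ has $\Psi$'s entries in a fixed block of rows/columns and constants elsewhere. For any minor indexed by $(I,J)$, partition $I=I_\Psi\sqcup I_c$ and $J=J_\Psi\sqcup J_c$ according to whether the row/column index lies in $\Psi$'s block or the constant block. Expand $\det(\overline{\Psi}[I,J])$ by Laplace along the constant rows $I_c$, then along the constant columns of each resulting subdeterminant; all factors involving only constant rows or constant columns evaluate to fixed constants, leaving a linear combination of minors $\calC_\star(\Psi)_{\star,\star}$ with coefficients depending only on the augmentation data.  Inductive hypothesis on $\Psi$ then finishes this case.

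I expect the main obstacle to be the sum-node case, since unlike the product case there is no single clean identity like Cauchy--Binet; one has to argue the multilinearity-plus-Laplace expansion carefully and keep track of signs and of the fact that different summands contribute to disjoint row/column subsets of a single minor.  All other cases are routine once the right expansion identity is identified.  Note that by unrolling this induction on a two-level formula consisting of an outer sum of products of (augmented) atoms, one recovers exactly \Cref{prop:our-needed-formula}, which is all that is used in \Cref{sec:minors-root-bounds}.
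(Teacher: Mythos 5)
Your proof is correct, but it takes a genuinely different route from the paper's at the key structural step. The paper first normalizes the formula: it rewrites every sum node $\Phi_1+\Phi_2$ as a product of two augmentation nodes via the identity
\[
A+B \;=\; \left[\begin{array}{c;{2pt/2pt}c} A & \Id \end{array}\right]\left[\begin{array}{c} \Id \\ \hdashline[2pt/2pt] B \end{array}\right],
\]
reduces product nodes to fan-in~$2$, and reduces augmentations to adding a single row or column at a time. After this normalization the induction has only \emph{two} cases: a binary product (handled by Cauchy--Binet) and a single-row/column augmentation (handled by cofactor expansion along the added row or column). You instead keep sums as a third case and handle them head-on via multilinearity of the determinant in rows followed by a generalized Laplace expansion across the chosen row-split — which is essentially reproving the classical determinant-of-a-sum formula (and its $r$-fold extension, i.e.\ \Cref{prop:det-sum}) inside the induction. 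Both arguments are valid and use the same ingredients (Cauchy--Binet, Laplace expansion, disjointness of leaf symbols across subformulas); the trade-off is that the paper's normalization sidesteps the sign bookkeeping and the row/column partitioning of the sum case — exactly the part you flagged as the main obstacle — at the cost of a one-time reduction step, whereas your version is more self-contained and makes the connection to \Cref{prop:det-sum} visibly explicit. Your augmentation case also handles multi-row/column augmentations in one shot rather than peeling them off one at a time, which is fine but not simpler once you account for the block Laplace expansion you need.
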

\begin{proof}
    We may assume all product and sum nodes have fan-in exactly~$2$, by expanding them to multiple fan-in-$2$ nodes.  We can also convert all sum nodes to combinations of product and augmentation nodes by using the identity $A + B = \left[
    \begin{array}{c;{2pt/2pt}c}
         A &  \Id
    \end{array}
    \right]
\left[
    \begin{array}{c}
         \Id \\ \hdashline[2pt/2pt]  B
    \end{array}
    \right]
    $, for appropriately shaped identity matrices~$\Id$.  Finally, we may assume all augmentation nodes only add one row or column, by expanding them to multiple augmentation nodes.  We can now prove the proposition by structural induction on the formula~$\Phi$, the base case of a single leaf being obvious.  By our simplifications, there are only two inductive cases: the product of two formulas, and the augmentation of a formula by a single row or column.

    In the product case we can use Cauchy--Binet:
    \begin{align}
        \calC_k(\Phi_1(A_1, \dots, A_s) \cdot \Phi_2( B_1, \dots, B_t))_{I,J} &= (\calC_k(\Phi_1(A_1, \dots, A_s)) \cdot \calC_k(\Phi_2( B_1, \dots, B_t)))_{I,J} \nonumber \\
        &= \sum_{K} \calC_k(\Phi_1(A_1, \dots, A_s))_{I,K} \cdot \calC_k(\Phi_2(B_1, \dots, B_t))_{K,J}. \label{eqn:CB1}
    \end{align}
    By induction,  each $\calC_k(\Phi_1(A_1, \dots, A_s))_{I,K}$ is a linear combination of products $\calC_\star(A_1)_{\star,\star} \cdots \calC_\star(A_s)_{\star,\star}$, and similarly for each $\calC_k(\Phi_2(B_1, \dots, B_t))_{K,J}$.  Thus \Cref{eqn:CB1} establishes that $\calC_k(\Phi_1 \cdot \Phi_2)_{I,J}$ is a linear combination of products $\calC_\star(A_1)_{\star,\star} \cdots \calC_\star(A_s)_{\star,\star} \cdot \calC_\star(B_1)_{\star,\star} \cdots \calC_\star(B_t)_{\star,\star}$.

    As for the augmentation case, suppose we are considering $\calC_k(\Phi)_{I,J}$ where  $\Phi =\left[
    \begin{array}{c;{2pt/2pt}c}
         c &  \Phi_1
    \end{array}
    \right]$ with $c$ being a fixed column vector. (The case of augmentation on the other side, or of augmentation by a row vector, is essentially the same.)  Now $\calC_k(\Phi)_{I,J}$ is either a minor of $\Phi_1$ (if $1 \not \in J$), in which case we're done by induction, or else it is of the form $\det\parens*{
    \left[
    \begin{array}{c;{2pt/2pt}c}
         c_I &  \Phi'_1
    \end{array}
    \right]}$ for some submatrix $\Phi'_1$ of $\Phi_1$.  Then by cofactor expansion of this determinant along the column~$c_I$, we obtain a linear combination of minors of~$\Phi'_1$.  In turn, this is a linear combination of minors of~$\Phi_1$, so again we're done by induction.
\end{proof}

\section{Ramanujan quotients of vertex transitive free products}   \label{sec:FFC-appendix}
\begin{definition}
    Given graphs $G_1, \dots, G_\freeprods$ rooted at $v_1, \dots v_\ell$ respectively, consider the following construction. Define $\widetilde{V}$ as the disjoint union of all $V'(G_i) := V(G_i)\setminus\{v_i\}$ and for each $v \in \widetilde{V}$, let $\Type(v)$ denote the unique $j$ such that $v \in V'(G_j)$. Let $V$ be the set of finite strings of the form $a_1 a_2 \dots a_k$ such that $a_1 \in \widetilde{V}$ and for $i > 1$, $a_i \in \widetilde{V} \setminus V'(G_{\Type(a_{i-1})})$. For $v \in V$ and $x, y \in \widetilde{V}$ such that $vx$ and $vy$ are also in $V$, we put an edge between $vx$ and $vy$ if $\Type(x) = \Type(y)$ and $\{x, y\}$ is an edge in $G_{\Type(x)}$; and we put an edge between $v$ and $vx$ if $\{v_{\Type(x)}, x\}$ is an edge in $G_{\Type(x)}$. We call the graph obtained by placing edges according to the described rule between vertices of $V$ as the \emph{free product} of $G_1, \dots, G_\freeprods$, denoted as $G_1 * \cdots * G_\freeprods$.
\end{definition}

\begin{remark}
    The free product of Cayley graphs rooted at the identity of the corresponding group coincides with the Cayley graph of the free product of the corresponding groups.  The free product of $d$ rooted edges, where a rooted edge can be thought of as the Cayley graph of $\F_2$ rooted at $0$, is the $d$-regular infinite tree.
\end{remark}

Next, we state results of how the spectrum of the free product of a collection of graphs is related to the spectra of those graphs in the collection.

\begin{definition}
    Given a measure $\mu$ on $\R$, the \emph{Cauchy transform} of $\mu$ is
    \[
        \cauchy{\mu}{x} := \int_{\R} \frac{1}{x-\lambda} d\mu(\lambda)
    \]
    For a real-rooted polynomial $p$, we will often consider the measure $\mu_p$ which is the uniform distribution over its roots. When considering this measure, we abuse notation and write $\cauchy{p}{x}$ instead of $\cauchy{\mu_p}{x}$.
\end{definition}

\begin{definition}
    The \emph{inverse Cauchy transform} of $\mu$ is
    \[
        \invcauchy{\mu}{x} := \sup\{y:\calG_{\mu}(y) = x\}
    \]
    when the quantity is well defined.
\end{definition}

The free convolution operator from free probability theory (see e.g. \cite{Spe09}) gives the distribution of the sum of free random variables from the distributions of the individual random variables.
\begin{definition}
    Given measures $\mu_1, \mu_2, \dots, \mu_\freeprods$, the \emph{free convolution} $\mu_1 \sqsum \cdots \sqsum \mu_\freeprods$ is defined as the measure satisfying
    \[
        \invcauchy{\mu_1 \sqsum \cdots \sqsum \mu_\freeprods}{x} = \sum_{i=1}^\freeprods \invcauchy{\mu_i}{x} - \frac{\freeprods - 1}{x}
    \]
\end{definition}

The significance of the free convolution to our work is that the spectral measure of the free product of $\freeprods$ graphs is the free convolution of the spectral measures of those graphs. In particular, one can find the following in \cite[Theorem 9.19]{Woess00}.
\begin{theorem}
    Given vertex transitive graphs $G_1, \dots, G_\ell$ rooted at $v_1, \dots, v_\ell$ with spectral measures $\mu_1, \dots, \mu_\freeprods$, the spectral measure of the free product graph $G_1 * \cdots * G_\freeprods$ is given by $\mu_1\sqsum \cdots \sqsum \mu_\freeprods$.
\end{theorem}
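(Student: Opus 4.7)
The plan is to interpret this identity through free probability, viewing the adjacency operator of the free product graph as a sum of operators that turn out to be freely independent with respect to the vacuum state at the root. Concretely, let $V$ be the vertex set of $G_1 \ast \cdots \ast G_\ell$ (reduced words as in the excerpt), let $\epsilon$ denote the empty word, and decompose $A(G_1 \ast \cdots \ast G_\ell) = \tilde{A}_1 + \cdots + \tilde{A}_\ell$, where $\tilde{A}_i$ retains only those edges contributed by copies of $G_i$ in the free product construction. Let $\phi$ be the vector state $T \mapsto \langle \delta_\epsilon, T\delta_\epsilon\rangle$ on operators acting on $\ell^2(V)$. Because the spectral measure of a vertex-transitive graph at its root is determined by its moments (closed walk counts), it is enough to show that $\phi$ restricted to the non-commutative polynomial algebra generated by $\tilde{A}_1, \ldots, \tilde{A}_\ell$ is the free product, in the sense of free probability, of the states corresponding to $\mu_1, \ldots, \mu_\ell$.

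The first step is the individual marginal identification: for every $k \geq 0$ and every $i$, the quantity $\phi(\tilde{A}_i^k)$ equals $\int \lambda^k\, d\mu_i(\lambda)$. This is because closed walks of length $k$ starting at $\epsilon$ that only use $\tilde{A}_i$-edges correspond bijectively to closed walks of length $k$ at $v_i$ inside $G_i$ — a walk starting at $\epsilon$ using a $G_i$-edge first moves to some $x \in V'(G_i)$ with $\{v_i,x\}\in E(G_i)$, then stays inside the $G_i$-copy attached at $\epsilon$, and must end at $\epsilon$, which is precisely a closed walk at $v_i$ in $G_i$. The second and main step is freeness: the family $\tilde{A}_1,\ldots,\tilde{A}_\ell$ is freely independent with respect to $\phi$. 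By definition, this amounts to showing that for any alternating index sequence $i_1 \ne i_2 \ne \cdots \ne i_k$ and any polynomials $p_j$ with $\phi(p_j(\tilde{A}_{i_j})) = 0$, one has $\phi\bigl(p_1(\tilde{A}_{i_1}) \cdots p_k(\tilde{A}_{i_k})\bigr) = 0$. The argument is the standard inductive one: applying $p_j(\tilde{A}_{i_j})$ to a basis vector $\delta_w$ whose last letter is not of type $i_j$ (or to $\delta_\epsilon$) produces a linear combination of vectors $\delta_{w u}$ where $u$ ranges over (possibly empty) reduced words starting with a letter of type $i_j$; the coefficient on $\delta_w$ itself (the ``$u = \text{empty}$'' term) equals $\phi(p_j(\tilde{A}_{i_j}))$, which vanishes by centering. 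Because consecutive indices differ, the resulting vectors are successively extended and never collapse back to $\delta_\epsilon$, so the overall inner product against $\delta_\epsilon$ is zero.

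Once freeness and the correct marginals are verified, the theorem is immediate from the defining property of free convolution: the distribution (with respect to the relevant state) of a sum of freely independent self-adjoint operators is the free convolution of their individual distributions, so $\mu_{A(G_1 \ast \cdots \ast G_\ell)} = \mu_1 \sqsum \cdots \sqsum \mu_\ell$. The main obstacle is the freeness verification, and the essential input there is vertex-transitivity of each $G_i$: it guarantees that the excursion statistics of $\tilde{A}_i$ starting from \emph{any} vertex $w$ with last letter not of type $i$ are identical to the excursion statistics of $A(G_i)$ at its root $v_i$, so that the centering condition $\phi(p_j(\tilde{A}_{i_j})) = 0$ kills the ``return to $\delta_w$'' term uniformly in $w$. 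Without vertex-transitivity the different copies of $G_i$ attached at different words $w$ would see different root behaviors, the centering would fail to be uniform, and the free convolution identity would break.
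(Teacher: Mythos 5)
The paper does not prove this theorem itself: it quotes it as a known result, citing Theorem~9.19 of Woess's monograph~\cite{Woess00}, so there is no in-paper proof to compare against. Your sketch is the standard free-probability argument going back to Voiculescu: you decompose the adjacency operator of the free product as $\tilde A_1+\cdots+\tilde A_\ell$ according to the colour of each edge, take $\varphi$ to be the vector state at the root $\delta_\epsilon$, check that each $\tilde A_i$ has distribution $\mu_i$ under $\varphi$ (the $\tilde A_i$-excursions from a word $w$ whose last letter is not of type $i$ see exactly a copy of $G_i$ rooted at $v_i$, and vertex-transitivity makes the spectral measure of $G_i$ at $v_i$ equal $\mu_i$), and then verify free independence by the usual inductive argument that centered alternating products kill the ``return'' coefficient at every step and therefore land in the span of words of length $k\ge 1$, orthogonal to $\delta_\epsilon$. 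This is correct, and you have identified the right place where vertex-transitivity enters: without it, the ``return-to-$\delta_w$'' coefficient would not be the same number $\varphi(p_j(\tilde A_{i_j}))$ for every word $w$, and the centering would not propagate through the induction. One slight presentational caveat: with the paper's definition of $\boxplus$ via inverse Cauchy transforms, the step ``the distribution of a sum of free self-adjoint operators is the free convolution'' is itself the nontrivial additivity theorem for the $R$-/$K$-transform rather than a definition, so strictly speaking your proof invokes that theorem as a black box; this is fine but worth naming. Woess's own route (the one the paper points to) reaches the same conclusion via Green-function and first-passage generating-function identities for nearest-neighbour walks on free products, which is the combinatorial shadow of the same argument; your operator-theoretic version is cleaner but buys the result only after importing the $R$-transform additivity theorem, whereas Woess's approach derives the transform identity directly from walk-counting.
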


\begin{definition}
    Given degree-$d$ polynomials, $p$ and $q$, let $A$ and $B$ be $d\times d$ diagonal matrices with the roots of $p$ and $q$ respectively on the diagonal. Let $\bQ$ be a random orthogonal matrix drawn from the Haar distribution over the group of $d\times d$ orthogonal matrices under multiplication. We define
    \[
        p \sqsum_d q := \E \left[\charpoly{\bQ A\bQ^\dagger + B}{x}\right]
    \]
    as the \emph{finite free convolution} of $p$ and $q$.
\end{definition}

Suppose $G_1, \dots, G_\freeprods$ are vertex transitive graphs where $G_i$ is $d_i$-regular and $\specrad(X)$, the spectral radius of $X$, is at most $\sum_{i=1}^\freeprods d_i$ where $X = G_1 \ast \cdots \ast G_\freeprods$. We show the existence of an infinite family of $X$-Ramanujan quotients. Let $s = \mathsf{LCM}(|V(G_1),\dots,|V(G_\freeprods)|)$. To construct an $X$-Ramanujan quotient on $st$ vertices, an approach is to take $A_1,\dots,A_\freeprods$ where $A_i$ is a graph comprising of $\frac{st}{|V(G_i)|}$ disjoint copies of $G_i$ and take $\bH = \sum \bP_i A_i \bP_i^\dagger$ where each $\bP_i$ is a random permutation matrix. The existence of an infinite family of $X$-Ramanujan graphs immediately follows from the theorem below.
\begin{theorem}     \label{thm:H-is-X-Ramanujan}
    $\bH$ is $X$-Ramanujan with positive probability.
\end{theorem}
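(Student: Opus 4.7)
The strategy is to apply the interlacing polynomials method, in the spirit of \cite{MSS15d}, to the random sum $\bH = \sum_{i=1}^\freeprods \bP_i A_i \bP_i^\dagger$. The argument has three parts: (1) establish that an appropriately indexed family of characteristic polynomials is interlacing, (2) identify the expected characteristic polynomial with a finite free convolution after factoring out the trivial eigenvalue, and (3) bound the maximum root of that finite free convolution using the vertex-transitivity of the $G_i$ and a theorem on finite free convolutions.

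For (1), I would realize each uniform random permutation $\bP_i \in \symm{st}$ as $\StringToPerm(\bz_i)$ for $\bz_i$ drawn from a product distribution over $\{0,1\}^{\binom{st}{2}}$, using \Cref{fact:perm-from-transpo}. Then, exactly as in the proof of \Cref{thm:lifts-interlacing-family}, each random transposition acts as a product of rank-$1$ random operators through the permutation representation (by \Cref{fact:stoch-refl} and \Cref{fact:prod-refl}), so real-rootedness of every convex combination follows from \cite[Theorem 4.2]{HPS18}. By \Cref{thm:root-bound-interlace}, some realization $(P_1^*, \ldots, P_\freeprods^*)$ satisfies
\[
    \maxroot\bigl(\charpoly{\sum_i P_i^* A_i (P_i^*)^\dagger}{x}\bigr) \;\le\; \maxroot\bigl(\E[\charpoly{\bH}{x}]\bigr).
\]

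For (2), since $\bH$ is always $d = \sum_i d_i$-regular, the all-ones direction $\mathbf{1}$ is preserved by every permutation conjugation and contributes the trivial eigenvalue $d$. To bound $\lambda_2(\bH)$ instead of $\lambda_1(\bH) = d$, I would apply the preceding analysis to the restriction $\bH|_{\mathbf{1}^\perp}$, which is well-defined since permutations preserve $\mathbf{1}$. Then, invoking \Cref{thm:factorpolyinvariance} with the Property $(\calP1)$ pair $(\symm{st}, \stdrep)$ (see \Cref{rem:std-and-sgn-std-P1}), the expected characteristic polynomial agrees with the one obtained by replacing each $\bP_i$ by a Haar-random unitary, which by definition of $\sqsum_d$ is the finite free convolution
\[
    \E\bigl[\charpoly{\bH|_{\mathbf{1}^\perp}}{x}\bigr] \;=\; \charpoly{A_1|_{\mathbf{1}^\perp}}{x} \sqsum_{st-1} \cdots \sqsum_{st-1} \charpoly{A_\freeprods|_{\mathbf{1}^\perp}}{x}.
\]

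For (3), I would invoke Theorem \ref{thm:finite-free-root-bound} (credited in the acknowledgments to Srivastava and Potters), which bounds the maximum root of a finite free convolution of characteristic polynomials by the supremum of the support of the classical free convolution of the corresponding empirical spectral measures. Each $A_i$ is a disjoint union of $st/|V(G_i)|$ copies of $G_i$, so its empirical spectral distribution equals the spectral measure $\mu_i$ of the vertex-transitive graph $G_i$. By the theorem from \cite[Theorem 9.19]{Woess00} quoted earlier in this appendix, $\mu_1 \sqsum \cdots \sqsum \mu_\freeprods$ is the spectral measure of $X = G_1 \ast \cdots \ast G_\freeprods$, whose support has supremum $\specrad(X)$. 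Chaining (1)--(3) gives $\lambda_2(\bH) \le \specrad(X)$ with positive probability, proving the theorem.

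The main obstacle I anticipate is step (2): cleanly identifying the expected characteristic polynomial as a finite free convolution. The interlacing family step is a direct translation of the paper's earlier machinery, and the root bound in step (3) is a direct invocation of an external theorem. But step (2) requires carefully using \Cref{thm:factorpolyinvariance} to swap random permutations for Haar unitaries, while simultaneously stripping out the trivial rank-one eigenspace so that the finite free convolution formula (which is defined via Haar-orthogonal conjugations without any trivial factor) applies on the nose.
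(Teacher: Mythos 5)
Your outline --- (1) interlacing family, so some realization has maxroot at most that of the expected characteristic polynomial; (2) identify that expected polynomial, modulo the trivial eigenvalue, as an $\freeprods$-fold finite free convolution; (3) bound its maximum root by the right edge of the free convolution of the spectral measures, which is $\specrad(X)$ --- is exactly the structure of the paper's proof. Where you differ is in how you establish (1) and (2). The paper obtains them by directly citing \cite{MSS15d}: Lemma~\ref{lem:interlace} (their Theorem~3.4) gives $\lambda_k(\bH)\le\lambda_k(\E[\charpoly{\bH}{x}])$ with positive probability simultaneously for every $k$, and Lemma~\ref{lem:quadrature} (their Corollary~4.9) gives the identity $\E[\charpoly{\bH}{x}]=(x-\sum_i d_i)\cdot p_1\sqsum_{st-1}\cdots\sqsum_{st-1}p_\freeprods$. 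You instead attempt to re-derive both from the paper's own machinery, which is a genuinely different route for those two steps. Your re-derivation of the interlacing is fine in spirit (with $|V|=1$ the potentials collapse to single group elements and the rank-one decomposition applies verbatim), though you must run the interlacing argument on $\charpoly{\bH|_{\mathbf{1}^\perp}}{x}$ from the start, since $\maxroot(\charpoly{\bH}{x})=\sum_i d_i$ deterministically and the bare $\maxroot$ bound in your step (1) is vacuous; the paper sidesteps this by using the simultaneous $\lambda_k$-bound of Lemma~\ref{lem:interlace} at $k=2$.

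The real gap is in step (2). Theorem~\ref{thm:factorpolyinvariance} as stated applies only to pairs $(\Gamma,\pi)$ with $\Gamma$ a \emph{finite} group, whereas the finite free convolution $\sqsum_{st-1}$ is defined via Haar measure on the orthogonal group $\Orthogonals{st-1}$ (not unitary, as you wrote, though the two conventions give the same polynomial). To swap the random permutations for Haar-random orthogonals through Theorem~\ref{thm:factorpolyinvariance}, you would need to (a) extend that theorem, Corollary~\ref{cor:P1}, and the Grand Orthogonality argument to compact groups, and (b) check that $(\Orthogonals{st-1},\idrep)$ satisfies Property~$(\calP 1)$, i.e.\ that the exterior powers of the defining representation of $\Orthogonals{d}$ are irreducible and pairwise non-isomorphic. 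Both are true and plausible extensions, but neither is in the paper, and neither is covered by \Cref{rem:std-and-sgn-std-P1} or \Cref{rem:hyper-P1}. The paper avoids both by invoking the ``quadrature'' identity \cite[Cor.~4.9]{MSS15d} directly as Lemma~\ref{lem:quadrature}, which is precisely the permutation-to-orthogonal swap packaged as a black box. Your route is philosophically appealing --- it would make that quadrature result a special case of the paper's own invariance theorem --- but it needs those two extensions spelled out before it counts as a complete proof.
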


Before proving \Cref{thm:H-is-X-Ramanujan} we state some results from \cite{MSS15d,MSS15c} and a result that is from personal communication with Nikhil Srivastava and Marc Potters. We overload notation and use $\lambda_k(A)$ to denote the $k$-th largest eigenvalue of matrix $A$ and $\lambda_k(p)$ to denote the $k$-th largest root of polynomial $p$. Another observation we point out is that since each $G_i$ is vertex transitive, it is regular with degree $d_i$.
\begin{lemma}[Theorem 3.4 of \cite{MSS15d}]     \label{lem:interlace}
    $\lambda_k(\bH) \le \lambda_k(\E \left[\charpoly{\bH}{x}\right])$ with positive probability.
\end{lemma}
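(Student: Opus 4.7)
The plan is to exhibit the characteristic polynomials of the random matrices $\sum_i \bP_i A_i \bP_i^\dagger$ as the leaves of an interlacing family of real-rooted polynomials, and then apply a standard MSS-style tree-descent to extract a specific realization whose $k$-th eigenvalue is at most the $k$-th root of the expected polynomial.

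First I would reparametrize the randomness using the swap encoding. By Fact~\ref{fact:perm-from-transpo}, each uniform random permutation matrix $\bP_i$ has the distribution of $\permrep(\StringToPerm(\bz_i))$ for $\bz_i$ drawn from a product distribution $\calD_i$ over $\{0,1\}^{\binom{st}{2}}$. Letting $\bz = (\bz_1,\ldots,\bz_\freeprods) \sim \calD := \calD_1 \times \cdots \times \calD_\freeprods$, define
\[
    p_z(x) := \charpoly{\sum_{i=1}^\freeprods \permrep(\StringToPerm(z_i)) A_i \permrep(\StringToPerm(z_i))^\dagger}{x}
\]
for each bit-string $z$. Then $\E[\charpoly{\bH}{x}] = \E_{\bz \sim \calD}[p_\bz]$, and $p_\bz$ is distributed as $\charpoly{\bH}{x}$.

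Next I would show that $\{p_z\}_z$ is an interlacing family. Expanding each $\permrep^{\boxtimes V}(\StringToPerm(\bz_i))$ as a product of independent rank-$1$ random matrix factors, one per swap bit of $\bz_i$ (via Facts~\ref{fact:stoch-refl} and \ref{fact:prod-refl}), the sum inside $p_\bz$ becomes a sum of $\freeprods$ Hermitian matrices, each sandwich-conjugated by such independent rank-$1$ variables. This is exactly the hypothesis of \cite[Theorem 4.2]{HPS18}, which yields real-rootedness of $\E_{\bz \sim \calD'}[p_\bz]$ for \emph{any} product subdistribution $\calD'$ of $\calD$; by Theorem~\ref{thm:convex-interlace-real-rooted}, this is precisely the interlacing-family property. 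The argument here mirrors the proof of Theorem~\ref{thm:lifts-interlacing-family} specialized to $\pi = \permrep$.

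Finally I would extract the bound on $\lambda_k$ by descending the natural binary tree indexed by the bits of $z$. At every internal node the parent polynomial is a real convex combination of its two children, and by the interlacing-family property all three are real-rooted with a common real interlacer; a standard fact about common interlacings then implies $\min\bigl(\lambda_k(p_0), \lambda_k(p_1)\bigr) \le \lambda_k\bigl(\alpha p_0 + (1-\alpha) p_1\bigr)$. Greedily following the child with the smaller $\lambda_k$ from the root produces a leaf $z^*$ with $\lambda_k(p_{z^*}) \le \lambda_k(\E_{\bz}[p_\bz]) = \lambda_k(\E[\charpoly{\bH}{x}])$; since $z^*$ occurs with positive probability under $\calD$, the desired conclusion follows. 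The main obstacle is verifying the rank-$1$ product factorization for every conditional expectation, but this is inherited unchanged from Theorem~\ref{thm:lifts-interlacing-family}, since the swap-bit expansion is block-diagonal and independent across both the color index $i \in [\freeprods]$ and the swap position.
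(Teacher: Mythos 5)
The paper does not actually prove this lemma — it quotes it verbatim as Theorem~3.4 of \cite{MSS15d}. Your reconstruction via the swap encoding and \cite[Theorem~4.2]{HPS18} is correct and is the natural way to re-derive the MSS result inside the framework the paper already builds for \Cref{thm:lifts-interlacing-family}: each $\bP_i = \permrep(\StringToPerm(\bz_i))$ factors into $\binom{st}{2}$ independent rank-$1$ random variables by \Cref{fact:stoch-refl} and \Cref{fact:prod-refl}, the conditional expectations over any product subdistribution are then real-rooted by \cite[Theorem~4.2]{HPS18}, and this is exactly the interlacing-family property via \Cref{thm:convex-interlace-real-rooted}. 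Your use of the common-interlacing fact for $\lambda_k$ is also valid; it is the general-$k$ form of what the paper records only for $\maxroot$ in \Cref{thm:root-bound-interlace}. One thing to state with care: the greedy tree-descent gives, for each fixed $k$, a positive-probability leaf $z^*$ with $\lambda_k(p_{z^*}) \le \lambda_k(\E[p_{\bz}])$; the favorable child can depend on $k$, so this is a per-$k$ guarantee, not a simultaneous one over all $k$. That is all the paper needs (it invokes the lemma only for $k=2$ in the proof of \Cref{thm:H-is-X-Ramanujan}), but it is worth noting if you want your statement to match the scope of \cite[Theorem~3.4]{MSS15d} exactly.
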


\begin{lemma}[Corollary 4.9 of \cite{MSS15d}]   \label{lem:quadrature}
    \[
        \E\left[\charpoly{\bH}{x}\right] = \left(x-\sum_{i=1}^\freeprods d_i\right) p_1(x) \sqsum_{d-1}\cdots\sqsum_{d-1} p_\freeprods(x)
    \]
    where $p_i(x) = \frac{\charpoly{A_i}{x}}{x-d_i}$.
\end{lemma}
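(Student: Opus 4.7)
The plan is to isolate the deterministic trivial eigenvalue of $\bH$ and then identify the expected characteristic polynomial on the orthogonal complement of $\mathbf{1}$ as an iterated finite free convolution, via a two-matrix quadrature identity applied inductively.

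Because each $G_i$ is vertex-transitive it is $d_i$-regular, so the atom $A_i$ (a disjoint union of copies of $G_i$ on $d:=st$ vertices) satisfies $A_i\mathbf{1}=d_i\mathbf{1}$. Permutation matrices fix $\mathbf{1}$, hence $\bP_iA_i\bP_i^\dagger\mathbf{1}=d_i\mathbf{1}$, and $\bH\mathbf{1}=\bigl(\sum_i d_i\bigr)\mathbf{1}$ deterministically. Let $\Pi=\Id-\tfrac{1}{d}\mathbf{1}\mathbf{1}^\dagger$ be the orthogonal projector onto $\mathbf{1}^\perp$, write $M^\circ=\Pi M\Pi$ for any $M$ fixing $\mathbf{1}$, and let $\bP_i^\circ$ denote the restriction of $\bP_i$ to $\mathbf{1}^\perp$. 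Since both $\bP_i$ and $A_i$ commute with $\Pi$, one has $\bH^\circ=\sum_i\bP_i^\circ A_i^\circ(\bP_i^\circ)^\dagger$, $\charpoly{A_i^\circ}{x}=p_i(x)$, and $\charpoly{\bH}{x}=\bigl(x-\sum_i d_i\bigr)\,\charpoly{\bH^\circ}{x}$, which already accounts for the trivial factor on the right-hand side.

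It now suffices to show $\E\bigl[\charpoly{\bH^\circ}{x}\bigr]=p_1\sqsum_{d-1}\cdots\sqsum_{d-1}p_\ell$, which I would prove by induction on $\ell$. The inductive step rests on the following two-matrix quadrature identity: for any Hermitian $(d-1)\times(d-1)$ matrices $M,N$,
\[
    \E_{\bP}\bigl[\charpoly{\bP^\circ M(\bP^\circ)^\dagger+N}{x}\bigr]\;=\;\charpoly{M}{x}\sqsum_{d-1}\charpoly{N}{x},
\]
with $\bP$ uniform in $\symm{d}$ and $\bP^\circ$ its restriction to $\mathbf{1}^\perp$. Conditioning on $\bP_1,\dots,\bP_{\ell-1}$, this identity gives $\E_{\bP_\ell}\bigl[\charpoly{\bH^\circ}{x}\bigr]=\charpoly{\sum_{i<\ell}\bP_i^\circ A_i^\circ(\bP_i^\circ)^\dagger}{x}\sqsum_{d-1}p_\ell$. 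Since the coefficient map of $\sqsum_{d-1}$ in either argument is a fixed linear transformation on the coefficients of that argument, we may interchange the outer expectation with $\sqsum_{d-1}$ and close the induction.

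The main obstacle is establishing the two-matrix quadrature identity, since $\sqsum_{d-1}$ is defined through Haar conjugation on $\Orthogonals{d-1}$ while our randomness $\bP^\circ$ is the image of a uniform permutation under $\stdrep$. The strategy mirrors the proof of \Cref{thm:factorpolyinvariance}: by \Cref{fact:compounds}(\ref{itm:charpoly}), \Cref{prop:our-needed-formula}, and Cauchy--Binet for compounds applied to $\bP^\circ M(\bP^\circ)^\dagger$, each coefficient of $\charpoly{\bP^\circ M(\bP^\circ)^\dagger+N}{x}$ is a fixed linear combination of products of compound entries of $\bP^\circ$, $(\bP^\circ)^\dagger$, $M$, and $N$; hence $\E\bigl[\charpoly{\bP^\circ M(\bP^\circ)^\dagger+N}{x}\bigr]$ depends on $\bP^\circ$ only through its second compound moments. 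By \Cref{rem:std-and-sgn-std-P1}, $(\symm{d},\stdrep)$ satisfies Property $(\calP 1)$, and the defining representation of $\Orthogonals{d-1}$ satisfies it too (its exterior powers $\bigwedge^k$ for $0\le k\le d-1$ are pairwise non-isomorphic irreducibles). Therefore \Cref{cor:P1} shows that $\bP^\circ$ and a Haar-random $\bQ\in\Orthogonals{d-1}$ have matching second compound moments, so they produce equal expected characteristic polynomials. This yields the quadrature identity and completes the proof.
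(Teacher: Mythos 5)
The paper does not prove this lemma; it is quoted verbatim as Corollary~4.9 of~\cite{MSS15d} and used as a black box in \Cref{sec:FFC-appendix}. Your submission is therefore a proof where the paper offers only a citation, and the route you have chosen is genuinely different from the one in~\cite{MSS15d}: there the two-matrix quadrature identity is established by a symmetric-function/minor-expansion computation specific to the orthogonal and symmetric groups, whereas you deduce it from Property~$(\calP 1)$ via \Cref{prop:our-needed-formula}, \Cref{cor:P1}, and matching second compound moments --- in other words, by reusing the machinery that the paper already builds in \Cref{sec:minors-root-bounds} for the additive-lift root bound. This is appealing because it unifies the two settings and makes clear that the only data entering the expected characteristic polynomial are the second compound moments of the conjugating ensemble.

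The pieces you assemble are sound: each $\bP_i A_i \bP_i^\dagger$ is real-symmetric and fixes $\mathbf{1}$ (by $d_i$-regularity), so the trivial factor $\bigl(x-\sum_i d_i\bigr)$ splits off deterministically and $\charpoly{A_i^\circ}{x}=p_i(x)$; $\sqsum_{d-1}$ is bilinear on coefficient vectors, so the interchange of expectation with convolution in the induction is legitimate; and $\sqsum_{d-1}$ is commutative and associative, which you need to close the induction cleanly. The Cauchy--Binet/\Cref{prop:our-needed-formula} reduction correctly shows that the conditional expectation over $\bP_\ell$ depends on $\bP_\ell^\circ$ only through $\E\bigl[\calC_k(\bP_\ell^\circ)_{I,J}^*\,\calC_{k'}(\bP_\ell^\circ)_{I',J'}\bigr]$, and once one has a Haar-random $\bQ\in\Orthogonals{d-1}$ in place of $\bP_\ell^\circ$, bi-invariance of Haar measure lets you replace $M$ and $N$ by the diagonal matrices of their roots (both are real symmetric here, so orthogonal diagonalization applies), recovering the definition of $\sqsum_{d-1}$.

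One point deserves flagging: you assert in passing that $\bigl(\Orthogonals{d-1},\,\text{defining rep}\bigr)$ satisfies $(\calP 1)$. This is true over~$\C$ but it is the linchpin of the argument and is nowhere in the paper or its remarks, so it should be justified or cited. The nontrivial case is $k=(d-1)/2$ for $d-1$ even: $\bigwedge^{(d-1)/2}\C^{d-1}$ is \emph{reducible} as an $\mathrm{SO}(d-1)$-representation (it splits into self-dual and anti-self-dual parts), but the two summands are exchanged by the orientation-reversing coset of $\Orthogonals{d-1}$, so the full exterior power is irreducible for $\Orthogonals{d-1}$ --- and the $\det$-twist in $\bigwedge^{k}\cong\bigwedge^{d-1-k}\otimes\det$ separates the remaining potential coincidences. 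With that sentence added, the argument is complete.
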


\begin{lemma}[Theorem 1.7 of \cite{MSS15c}]
    \[
        \invcauchy{q_1\sqsum_{d-1} \cdots~\sqsum_{d-1}q_\freeprods}{x} \le \sum_{i=1}^\freeprods \invcauchy{q_i}{x} - \frac{\ell-1}{x} = \invcauchy{q_1\sqsum~\cdots~\sqsum q_\freeprods}{x}
    \]
\end{lemma}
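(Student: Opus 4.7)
The plan is to prove the statement in two pieces: the right-hand equality is a direct consequence of the definition of $\sqsum$, while the left-hand inequality requires a finite-dimensional analog of Voiculescu's subordination relations. For the equality, iterating the two-argument identity $\invcauchy{\mu \sqsum \nu}{x} = \invcauchy{\mu}{x} + \invcauchy{\nu}{x} - 1/x$ (the defining property of $\sqsum$) on $\ell-1$ of the $\ell$ measures telescopes the $1/x$ contributions and yields the stated sum. For the inequality, I would use the associativity of $\sqsum_{d-1}$ (established in MSS15c) together with the telescoping above to reduce by induction to the two-argument base case
\[
    \invcauchy{p \sqsum_{d-1} q}{x} \;\le\; \invcauchy{p}{x} + \invcauchy{q}{x} - \tfrac{1}{x}, \qquad (\ast)
\]
for monic real-rooted polynomials $p, q$ of degree $d-1$; the general $\ell$ case then follows by iterating $(\ast)$ against the classical equality.

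To prove $(\ast)$, I would set up a finite analog of Voiculescu's subordination. Define analytic functions $\omega_1(x), \omega_2(x)$ by the system
\[
    \cauchy{p \sqsum_{d-1} q}{x} \;=\; \cauchy{p}{\omega_1(x)} \;=\; \cauchy{q}{\omega_2(x)},
\]
with branches chosen so that $\omega_i(x) \to x$ as $x \to \infty$. In classical free probability one has the \emph{exact} identity $\omega_1(x) + \omega_2(x) = x + 1/\cauchy{p \sqsum q}{x}$; the finite-dimensional analog I would aim to establish is the subordination \emph{inequality}
\[
    \omega_1(x) + \omega_2(x) \;\ge\; x + 1/\cauchy{p \sqsum_{d-1} q}{x}. \qquad (\ast\ast)
\]
Substituting $y := \cauchy{p \sqsum_{d-1} q}{x}$ into $(\ast\ast)$ --- so that $\omega_i(x) = \invcauchy{q_i}{y}$ on the correct branch, which is precisely the branch picked out by the $\sup$ in the definition of $\invcauchy{\cdot}{\cdot}$, and $x = \invcauchy{p \sqsum_{d-1} q}{y}$ --- rearranges directly into $(\ast)$.

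The bulk of the work is in proving $(\ast\ast)$. I would proceed via the explicit bilinear coefficient formula for $\sqsum_{d-1}$: writing $p(x) = \sum_k (-1)^k a_k x^{d-1-k}$ and $q(x) = \sum_k (-1)^k b_k x^{d-1-k}$,
\[
    [x^{d-1-k}](p \sqsum_{d-1} q) \;=\; (-1)^k \sum_{i+j=k} \frac{(d-1-i)!\,(d-1-j)!}{(d-1)!\,(d-1-k)!}\, a_i b_j,
\]
which, combined with the logarithmic-derivative identity $\cauchy{f}{x} = f'(x)/((d-1)f(x))$, yields a polynomial identity tying together $\cauchy{p}{\omega_1(x)}$, $\cauchy{q}{\omega_2(x)}$, and $\cauchy{p \sqsum_{d-1} q}{x}$. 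The target is to read off the sign of the defect $\omega_1(x) + \omega_2(x) - x - 1/\cauchy{p \sqsum_{d-1} q}{x}$ from this identity. As a sanity check, the identity should degenerate to classical subordination as $d \to \infty$, making the defect vanish in the limit.

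The hard part will be this sign verification. In classical free probability, subordination is an \emph{equality} obtained from operator-algebraic machinery (conditional expectations onto free subalgebras), none of which is available purely at the level of polynomials and their coefficients. At finite $d$ one must replace this with a direct analytic-combinatorial argument, likely by reducing the sign of the defect to the non-negativity of a discriminant attached to a quadratic equation satisfied by the three Cauchy transforms above, and then confirming that the finite correction is of the favorable sign at every $d$. This sign analysis is the technical heart of the Marcus--Spielman--Srivastava finite free probability framework.
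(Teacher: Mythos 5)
This lemma is not proven in the paper at all; it is imported verbatim as Theorem~1.7 of \cite{MSS15c}, so there is no ``paper's own proof'' to compare against. You should be aware of that, since it changes what counts as a reasonable answer here.

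On the right-hand equality, no argument is required: the displayed identity is literally the paper's definition of the free convolution $\sqsum$ (the measure whose inverse Cauchy transform equals $\sum_i \invcauchy{\mu_i}{x} - (\ell-1)/x$). The ``telescoping'' you describe is therefore vacuous; there is nothing to telescope because the $\ell$-fold statement is the definition, not a consequence of a two-argument identity.

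On the left-hand inequality, your proposed route --- a finite-$d$ analog of Voiculescu subordination --- is a genuinely different strategy from the one in \cite{MSS15c}, which proceeds directly at the level of coefficients and a variational/monotonicity argument for an associated transform, with no appeal to subordination functions $\omega_i$. More importantly, as written the proposal has a real gap. You correctly observe that after the substitution $y = \cauchy{p \sqsum_{d-1} q}{x}$, the subordination inequality $(\ast\ast)$ is \emph{equivalent} to the two-argument $\calK$-inequality $(\ast)$ you are trying to prove; but that means reducing $(\ast)$ to $(\ast\ast)$ makes no progress. The ``sign verification'' you then defer --- reading off the sign of the defect from a coefficient identity or a discriminant --- is precisely the content of the theorem, and classical subordination machinery (conditional expectations onto free subalgebras) gives no leverage at finite $d$, as you yourself note. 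Until that sign computation is actually carried out (or replaced by the coefficient-level argument of \cite{MSS15c}), this is a restatement of the problem rather than a proof.
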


The following theorem and its proof was given to us by Nikhil Srivastava and Marc Potters.
\begin{theorem} \label{thm:finite-free-root-bound}
Let $p_1(x),\dots,p_\ell(x)$ be degree $d$ polynomials.  Let $\mu_1,\dots,\mu_{\ell}$ be the normalized counting measures of the roots of $p_1, \dots, p_\ell$ respectively.  
Then the max root of $p_1 \boxplus_d \dots \boxplus_d p_d$ is upper bounded by the right edge of $\mu_1 \boxplus \dots \boxplus \mu_d$.
\end{theorem}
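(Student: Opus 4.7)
The plan is to reduce the desired max-root bound to a pointwise inequality between right-branch inverse Cauchy transforms and then invoke Theorem 1.7 of \cite{MSS15c} (the third lemma stated just above in this appendix). The key calculus observation I will use is that for any real-rooted monic polynomial $p$ of degree $d$ with largest root $\maxroot(p)$, the Cauchy transform $\cauchy{p}{y} = \frac{1}{d}\sum_i \frac{1}{y-\lambda_i(p)}$ is a strictly decreasing bijection from $(\maxroot(p),\infty)$ onto $(0,\infty)$. Hence its right-branch inverse $\invcauchy{p}{\cdot}$ is strictly decreasing from $(0,\infty)$ onto $(\maxroot(p),\infty)$, and in particular $\inf_{x>0}\invcauchy{p}{x} = \maxroot(p) = \lim_{x \to \infty}\invcauchy{p}{x}$. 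The analogous identity holds for any compactly supported probability measure $\mu$: its right edge equals $\inf_{x>0}\invcauchy{\mu}{x}$, because on the rightmost branch the Cauchy transform is again a decreasing bijection from $(\text{right edge},\infty)$ onto $(0,\infty)$.

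With these two identifications in hand, I would apply the MSS15c inequality at the degree of the polynomials being convolved (the excerpt states it for degree $d-1$ only because in that context the relevant polynomials had degree $d-1$; the identical statement holds at any degree). This yields, on the common domain of right-branch definition,
\[
    \invcauchy{p_1 \boxplus_d \cdots \boxplus_d p_\ell}{x} \;\leq\; \sum_{i=1}^\ell \invcauchy{p_i}{x} \;-\; \frac{\ell-1}{x} \;=\; \invcauchy{\mu_1 \boxplus \cdots \boxplus \mu_\ell}{x},
\]
where the equality is by definition of the free convolution of measures. Taking $\inf_{x>0}$ of both sides and using the two identifications from the previous paragraph then gives
\[
    \maxroot(p_1 \boxplus_d \cdots \boxplus_d p_\ell) \;=\; \inf_{x>0}\invcauchy{p_1 \boxplus_d \cdots \boxplus_d p_\ell}{x} \;\leq\; \inf_{x>0}\invcauchy{\mu_1 \boxplus \cdots \boxplus \mu_\ell}{x} \;=\; \text{right edge of } \mu_1 \boxplus \cdots \boxplus \mu_\ell,
\]
which is exactly the claim.

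The main obstacle is not really mathematical but cited-result-management: one needs to know that $\mu_1 \boxplus \cdots \boxplus \mu_\ell$ has compact support with a well-defined right edge, and that its inverse Cauchy transform decreases down to that edge as $x \to \infty$. Both are standard consequences of free probability once one knows each $\mu_i$ has compact support (which is obvious here, since $\mu_i$ is uniform on the $d$ roots of $p_i$), but they need to be cited rather than proved from scratch. Once those are granted, everything else is elementary one-variable calculus, so the proof reduces to the short chain of inequalities above.
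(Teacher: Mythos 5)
Your proposal is correct and follows essentially the same route as the paper: both rely on the MSS15c inequality $\invcauchy{p_1\sqsum_d\cdots\sqsum_d p_\ell}{x}\le\invcauchy{\mu_1\sqsum\cdots\sqsum\mu_\ell}{x}$ together with the monotonicity of the right-branch inverse Cauchy transform and its identification of the max root (resp.\ right edge) as the limit at infinity. The only cosmetic difference is in execution: you take $\inf_{x>0}$ of both sides, whereas the paper evaluates the inequality at the single point $w=\calG_{\mu_1\sqsum\cdots\sqsum\mu_\ell}(M)$ and uses $\calK_{\mu_1\sqsum\cdots\sqsum\mu_\ell}(w)=M$; your limit formulation is arguably a touch cleaner since it sidesteps any worry about whether $\calG_{\mu_1\sqsum\cdots\sqsum\mu_\ell}$ extends finitely to the edge point $M$ itself.
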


\begin{proof}
Let $M$ be the right edge of the support of $\mu_1 \boxplus \dots \boxplus \mu_d$.  By the pinching lemma, we have
\begin{align*}
\invcauchy{p_1 \boxplus_d \dots \boxplus_d p_\ell}{w} &\le  \calK_{p_1}(w) + \dots + \calK_{p_\ell}(w) - 1/w \\
&= \calK_{\mu_1 \boxplus \dots \boxplus \mu_\ell} (w) - 1/w
\end{align*}
The proposition now allows us to translate bounds on the inverse Cauchy transform into bounds on the edge of the spectrum.  We apply the proposition, and use the pinching lemma applied to $w = G_{\mu \boxplus \nu}(M)$:
\begin{align*} 
\text{maxroot}(p_1 \boxplus_d\dots \boxplus_d p_{\ell}) &= \calK_{p_1 \boxplus_d \dots \boxplus_d p_{\ell}}(\infty) \\
&\le \calK_{p_1 \boxplus_d\dots\boxplus_d p_\ell}(\calG_{\mu_1 \boxplus\dots \boxplus\mu_\ell}(M)) \\
&\le \calK_{\mu_1 \boxplus \dots \boxplus \mu_\ell} (\calG_{\mu_1 \boxplus \dots \mu_\ell}(M)) - \frac{1}{\calG_{\mu_1 \boxplus \dots \boxplus \mu_\ell}(M)} \\
&= M - \frac{1}{\calG_{\mu_1 \boxplus \dots\boxplus \mu_\ell}(M)} \\
&\le M
\end{align*}
This is the desired result.
\end{proof}

\begin{proof}[Proof of \Cref{thm:H-is-X-Ramanujan}]
    From \Cref{thm:finite-free-root-bound}
    \begin{align*}
        \lambda_1(p_1\sqsum_{d-1}\cdots~\sqsum_{d-1}p_\freeprods) &\le M
    \end{align*}
    where $M$ is the right edge of the support of $\mu_1\boxplus\dots\boxplus\mu_\ell$, also equal to $\specrad(X)$.

    From the assumption that $\specrad(X)<\sum_{i=1}^\freeprods d_i$ and \Cref{lem:quadrature}, it follows that $\lambda_2\left(\E\left[\charpoly{\bH}{x}\right]\right)\le\specrad(X)$. Consequently, from \Cref{lem:interlace} we can conclude that $\lambda_2\le\specrad(X)$ with positive probability, thereby proving the theorem.
\end{proof}

\end{document}